\begingroup\color{blue}
\theoremstyle:=definition,remark,plain\do{%
        \expandafter\g@addto@macro\csname th@\theoremstyle\endcsname{%
            \addtolength\thm@preskip\parskip
            }%
        }
\newtheorem*{thm*}{Theorem}
\newtheorem{thm}{Theorem}[section]
\newtheorem{cor}[thm]{Corollary}
\newtheorem{lemma}[thm]{Lemma}
\newtheorem{propn}[thm]{Proposition}
\newtheorem*{propn*}{Proposition}
\newtheorem{conj}[thm]{Conjecture}
\theoremstyle{definition}
\newtheorem{defn}[thm]{Definition}
\theoremstyle{remark}
\newtheorem{remark}[thm]{Remark}
\newtheorem*{remark*}{Remark}
\def\NR{\mathit{NR}}
\DeclareMathOperator{\ord}{ord}
\DeclareMathOperator{\dv}{div}
\newcommand{\NZ}{\mathbb{N}_0}
\newcommand{\atext}[1]{\text{\quad #1\quad}}
\newcommand{\NN}{\mathbb{N}}
\title{On Sylvester sums of compound sequence semigroup complements}
\author{T. Alden Gassert}
\address{Department of Mathematics, Western New England University, Springfield, MA 01119}
\email{thomas.gassert@wne.edu}
\author{Caleb McKinley Shor}
\address{Department of Mathematics, Western New England University, Springfield, MA 01119}
\email{cshor@wne.edu}
\subjclass[2010]{14H55, 20M13, 11D85, 11D07}
\keywords{Sylvester sums; numerical semigroups; compound sequences; non-representable numbers; Frobenius number; Weierstrass points; towers; superelliptic curves.}
\begin{document}
\begin{abstract}
In this paper, we consider the set $\NR(G)$ of natural numbers which are not in the numerical semigroup generated by a compound sequence $G$.  We generalize a result of Tuenter which completely characterizes $\NR(G)$.  We use this result to compute Sylvester sums, and we give a direct application to the computation of weights of higher-order Weierstrass points on some families of complex algebraic curves. 
\end{abstract}
\maketitle
\section{Introduction, motivation}
Let $\mathbb{N}$ and $\mathbb{N}_0$ denote, respectively, the sets of positive integers and non-negative integers.  Let $A=(a_1,\dots,a_k)$ and $B=(b_1,\dots,b_k)\in\mathbb{N}^k$ such that $\gcd(a_i,b_j)=1$ for all $i\ge j$.  Let $g_0=\prod_{j=1}^k a_j$ and, for $1\leq i\leq k$, let $g_i=g_{i-1}b_i/a_i$ (i.e. $g_i=b_1\cdots b_i a_{i+1}\cdots a_k$). We say the sequence $(g_i)_{i=0}^{k}$ is a \textit{compound sequence}, and we denote it $G(A,B)$.  Such a sequence can be seen as a generalization of a geometric sequence.  We say a set $G$ is \textit{compound} if its elements can be ordered to form a compound sequence.

%With this, we have a \textit{compound sequence} $G(A,B)=\{g_i : 0\leq i\leq k\}$ which can be seen as a generalization of a geometric sequence.

Let 
\[R(A,B)=\left\{\sum_{i=0}^k n_i g_i : n_i\in\NZ, g_i\in G(A,B)\right\},\]
the set of integers representable as non-negative linear combinations of elements of $G(A,B)$, and let $\NR(A,B)$ be the complement of $R(A,B)$ in $\NZ$.  That is, $\NR(A,B)$ is the set of positive integers which are not representable --- hence ``$\NR$'' --- as non-negative linear combinations of elements of the compound sequence $G(A,B)$.  It is known that $\NR(A,B)$ is a finite set.  In this paper, we are interested in the sum of the $m$th powers of elements of $\NR(A,B)$; i.e. a formula for the $m$th \textit{Sylvester sum}
\begin{align} \label{eq:sylvester sum}
S_m(A,B):=\sum\limits_{n\in \NR(A,B)}n^m.
\end{align}

If $G$ is a finite geometric sequence of positive integers such that $\gcd(G)=1$, it follows that $G=\{a^{k-i}b^{i} : 0\leq i\leq k\}=G(A,B)$, where $A=(a,\dots,a), B=(b,\dots,b)\in\mathbb{N}^k$ for some relatively prime $a,b\in\mathbb{N}$.
When $G$ is geometric, we write $S_m(a,b;k)$ to denote the $m$th Sylvester sum $S_m(A,B)$.  Any set $G=\{a,b\}$ with $\gcd(a,b)=1$ is compound, and so has corresponding $m$th Sylvester sum $S_m(a,b;1)$. 

\subsection{Known results}
The sum in equation \eqref{eq:sylvester sum} is so named due to its proximity to the Sylvester \emph{denumerant}. Given a non-negative integer $n$ and a set of positive, relatively prime integers $G = \{g_1,\ldots, g_\ell\}$, the denumerant $D(n;G)$ is the number of solutions to $\sum_{i=1}^\ell x_i g_i = n$ in $\mathbb N_0^\ell$. If $a$ and $b$ are relatively prime, then $D(n;\{a,b\}) \in \{0,1\}$ for $0 \le n \le ab-1$, and in \cite{Sylvester1882} Sylvester notes that in this simplest case
\begin{align}\label{eqn:sylvester_result}
S_0(a,b;1)=\sum_{n=0}^{ab-1}(1-D(n;\{a,b\})) = \frac{(a-1)(b-1)}{2}.
\end{align}
The case where $m=1$ and $k=1$ was computed by Brown and Shiue \cite{BrownShiue93}, where they found that 
\begin{align}\label{eqn:brown_shiue_result}
S_1(a,b;1)=\frac{(a-1)(b-1)(2ab-a-b-1)}{12}.
\end{align}
Shortly afterward in \cite{Rodseth93}, using an exponential generating function, R{\o}dseth found for $m\geq1$, 
\begin{align}\label{eqn:rodseth_result}
S_{m-1}(a,b;1)=\frac{1}{m(m+1)}\sum_{i=0}^{m}\sum_{j=0}^{m-i}\binom{m+1}{i}\binom{m+1-i}{j}B_i B_j a^{m-j}b^{m-i}-\frac{1}{m}B_m,
\end{align}
where $B_0, B_1, B_2, \dots$ are the Bernoulli numbers.

In \cite{Tuenter06}, Tuenter presented an identity which characterizes the non-representable numbers for the case where $k=1$.  In particular, for any function $f$ defined on the positive integers, one has 
\begin{align}\label{eqn:tuenter_result}
\sum_{n\in \NR(a,b;1)}\left(f(n+a)-f(n)\right)=\sum_{n=1}^{a-1}\left(f(nb)-f(n)\right).
\end{align}
Among this identity's numerous applications, one can use the monomial $f(n)=n^{m+1}$ to derive an explicit formula for $S_m(a,b;1)$, and the exponential function $f(n)=e^{nz}$ to derive equation~\eqref{eqn:rodseth_result}.

%The goal of this paper is to generalize equation~\eqref{eqn:tuenter_result} to integers which are non-representable as non-negative linear combinations of elements of compound sequences.

\subsection{Motivation}
In \cite{Shor16}, the author found a formula for the $q$-Weierstrass weight of branch points on a superelliptic curve.  In order to compute the weight, one needs to calculate the number of missing orders of vanishing in a certain basis of $q$-differentials as well as the sum of the missing orders.  These quantities are exactly $S_0(a,b;1)$ and $S_1(a,b;1)$.

In this paper, we are motivated by the problem of computing the higher-order Weierstrass weight of the point at infinity in a tower of curves defined by equations of superelliptic curves.  This follows work of Silverman, who investigated higher-order Weierstrass points on hyperelliptic curves in \cite{Silverman90}; of Towse, who looked at Weierstrass weights of branch points on superelliptic curves in \cite{Towse96}; and more recently of this paper's second author, who looked at higher-order Weierstrass weights of branch points on superelliptic curves in \cite{Shor16}.

In general, higher-order Weierstrass points are special points on an algebraic curve because their weights are invariant under automorphisms. One can use Weierstrass points to show a non-hyperelliptic curve of genus $g\geq2$ has a finite automorphism group. (See \cite{ShaskaShor2015Advances}, for example.) Mumford, in \cite{Mumford99}, has suggested that $q$-Weierstrass points are analogous to $q$-torsion points on an elliptic curve.

\subsection{Main results}
We generalize equation~\eqref{eqn:tuenter_result} for compound sequences (Theorem~\ref{th:tuenter-generalization}) and demonstrate a few applications.  First, we use power functions to get explicit formulas for $S_m(A,B)$ for $m=0,1,2,3$ (Proposition~\ref{prop:closed-forms}).  Second, we use an exponential function to generalize equation~\eqref{eqn:rodseth_result} to compute $S_m(A,B)$  (Theorem~\ref{thm:rodseth-generalization}).

With explicit formulas for the $m=0$ and $m=1$ cases, we obtain the following result (Theorem~\ref{thm:q-wt-p-infty-compound}).

\begin{thm*}
Let $A=(a_1,\dots,a_k), B=(b_1,\dots,b_k)\in\mathbb{N}^k$ with $\gcd(a_i,b_j)=1$ for all $i,j$.  For $S_0(A,B)$ as given in Proposition~\ref{prop:closed-forms}, suppose $S_0(A,B)\ge 2$.  For $1\leq i\leq k$, let $f_i(x)\in\mathbb{C}[x]$ be a separable polynomial of degree $b_i$.
Consider the affine curve 
\[A_k = \left\{(x_0,\dots,x_k)\in\mathbb{C}^{k+1} : x_i^{a_i}=f_i(x_{i-1})\text{ for } 1\leq i\leq k\right\}.\]  Assume the affine curve $A_k$ is nonsingular, and let $C_k$ be the nonsingular projective model of $A_k$.  (Examples of such curves are given in Section \ref{sec:tower-examples}.)  Then $C_k$ is a curve of genus $g=S_0(A,B)$ with one point at infinity, $P_\infty^k$, which has $q$-Weierstrass weight 
\begin{align*}
w^{(q)}(P_\infty^k) &= 
\begin{dcases*} 
\frac{S_0(A^2,B^2)}{12}-S_0(A,B) & for $q = 1$, \\
\frac{S_0(A^2,B^2)}{12} & for $q \ge 2$,
\end{dcases*}
\end{align*}
where $A^e$ and $B^e$ denote component-wise exponentiation.  In particular, given a particular curve $C_k$, the $q$-Weierstrass weight of the point at infinity is constant for all $q\geq2$.
\end{thm*}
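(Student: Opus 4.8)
The plan is to identify the three ingredients that go into a $q$-Weierstrass weight computation for the point $P_\infty^k$ on $C_k$ — the genus $g$, the dimension $N_q = \dim H^0(C_k, (\Omega^1)^{\otimes q})$, and the gap sequence at $P_\infty^k$ of the corresponding linear system — and reduce each of them to a Sylvester-sum statement about the compound sequence $G(A,B)$. First I would analyze the curve $C_k$ as an iterated superelliptic cover: the map $x_0 \colon C_k \to \mathbb{P}^1$ has a single totally ramified point above $x_0 = \infty$ (this is where the hypothesis $\gcd(a_i, b_j) = 1$ and the tower structure $x_i^{a_i} = f_i(x_{i-1})$ is used to ensure there is exactly one point at infinity with the expected ramification). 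I would compute the ramification of $x_0$ and apply Riemann–Hurwitz, or better, directly exhibit a basis of holomorphic differentials adapted to the tower, in order to show $g = S_0(A,B)$; in fact I expect the pole orders at $P_\infty^k$ of the functions $x_0^{n_0} x_1^{n_1} \cdots x_k^{n_k}$ to realize precisely the numerical semigroup $R(A,B)$, so that the Weierstrass gaps at $P_\infty^k$ are exactly $\NR(A,B)$, immediately giving $g = |\NR(A,B)| = S_0(A,B)$ by counting, and the hypothesis $S_0(A,B) \ge 2$ guarantees $g \ge 2$ so that $q$-Weierstrass points are defined.

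Next I would treat the general $q$ case uniformly. For $q \ge 2$, by Riemann–Roch $N_q = (2q-1)(g-1)$, and the $q$-gap sequence at $P_\infty^k$ consists of $N_q$ positive integers; the key structural claim is that the non-$q$-gaps are exactly the numbers representable as $\sum n_i g_i$ using the ``doubled'' or rather $q$-scaled data, so that the $q$-Weierstrass weight, which is $\sum (\text{gap}) - \binom{N_q+1}{2}$ after the standard normalization, collapses to a sum of the form $S_0(A^2, B^2)/12$. Concretely, I expect to show that a basis of $(\Omega^1)^{\otimes q}$ near $P_\infty^k$ has vanishing orders whose complement in an appropriate range is governed by the compound sequence $G(A^2, B^2)$ — note $g_0(A^2,B^2) = \prod a_j^2$, etc. — and then the identity $\sum_{n \in \NR} n = S_1$ together with the explicit formula from Proposition~\ref{prop:closed-forms} for $S_0(A^2,B^2)$ does the bookkeeping. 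The $q = 1$ case is the holomorphic-differentials case: here $N_1 = g$ and the weight is $w^{(1)}(P_\infty^k) = \sum_{\text{gaps}} - \binom{g+1}{2} = S_1(A,B) - \binom{g+1}{2}$; I would then verify the algebraic identity $S_1(A,B) - \binom{S_0(A,B)+1}{2} = S_0(A^2,B^2)/12 - S_0(A,B)$ using the closed forms in Proposition~\ref{prop:closed-forms}, which accounts for the extra $-S_0(A,B)$ term.

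The main obstacle I anticipate is not the semigroup combinatorics — that is handled by Theorem~\ref{th:tuenter-generalization} and Proposition~\ref{prop:closed-forms} — but rather the precise determination of the $q$-gap sequence at $P_\infty^k$, i.e. showing that the orders of vanishing at $P_\infty^k$ of a basis of global $q$-differentials are exactly the set I claim. This requires producing an explicit adapted basis: one writes $q$-differentials as $h(x_0,\dots,x_k)\,(dx_0)^{\otimes q}$, computes the divisor of $dx_0$ on $C_k$ (it is supported on the ramification of $x_0$ and on $P_\infty^k$), and then determines for which monomials $h$ the resulting $q$-differential is holomorphic and what its order at $P_\infty^k$ is. Carrying this out cleanly — keeping track of the local uniformizer at $P_\infty^k$, the valuations $v_{P_\infty^k}(x_i)$, and the contribution of $dx_0$ — is the technical heart; once the vanishing orders are pinned down, the weight formula follows by the definition $w^{(q)}(P) = \sum_{j=1}^{N_q}(\epsilon_j - j)$ where $\epsilon_1 < \cdots < \epsilon_{N_q}$ are the orders, and substituting the Sylvester-sum formulas finishes the proof. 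I would also need to confirm, via the examples in Section~\ref{sec:tower-examples} or a direct argument, that the nonsingularity hypothesis on $A_k$ is consistent with $f_i$ separable of degree $b_i$ and that $C_k$ genuinely has a single point at infinity.
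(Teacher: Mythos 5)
Your outline follows the same route as the paper (a single totally ramified point at infinity with $\nu(x_i)=g_i$, gap sequence at $P_\infty^k$ given by $\NR(A,B)$, an explicit monomial basis of $q$-differentials, then Sylvester-sum bookkeeping), but two points in it are genuine problems rather than mere unfinished computation. First, your structural claim for $q\ge 2$ --- that the non-$q$-gaps at $P_\infty^k$ are ``governed by the compound sequence $G(A^2,B^2)$'' --- is false. For every $q$ the set of missing vanishing orders is the same reflected set $\{(2g-2)q-s : s\in\NR(A,B),\ (2g-2)q-s\ge 0\}$, so the weight is $S_1(A,B)-g(g-1)/2$ for $q\ge2$ (and $S_1(A,B)-g(g+1)/2$ for $q=1$, as you say); the quantity $S_0(A^2,B^2)$ enters only through the purely algebraic identity $S_1(A,B)=\tfrac{S_0(A,B)^2-S_0(A,B)}{2}+\tfrac{S_0(A^2,B^2)}{12}$ of Proposition~\ref{prop:closed-forms}, not through any squared or $q$-scaled semigroup structure in the gap sequence. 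If you pursued a basis ``adapted to $G(A^2,B^2)$'' you would be chasing something that does not exist; the correct bookkeeping is the $S_1$-based one you mention in passing, and for $q=1$ you also need $2g-2\notin\NR(A,B)$ (Corollary~\ref{cor:2g=F+1}) so that the reflected gap set has exactly $g-1$ elements.

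Second, the step you defer as ``the technical heart'' is in fact the proof, and your plan leaves open exactly the parts where the hypotheses are used. The paper does not work with $\dv(\mathrm{d}x_0)$ directly: it introduces $\omega=\mathrm{d}x_0/\prod_{i=1}^k x_i^{a_i-1}$ and proves $\nu_P(\omega)=0$ at every affine point (Proposition~\ref{prop:no-affine-points}), by a case analysis of local parametrizations at points where some $p_i=0$ versus points where some $f_i'(p_{i-1})=0$; this is where nonsingularity of $A_k$ is used (via Lemma~\ref{lem:coords_not_zero}, these two degeneracies cannot occur simultaneously), and it yields $\dv(\omega^q)=(2g-2)qP_\infty^k$, which is what makes the monomial basis and its vanishing orders transparent. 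Relatedly, your genus argument needs more than the observation that monomials $x_0^{n_0}\cdots x_k^{n_k}$ have pole orders filling $R(A,B)$: that only shows $R(A,B)$ is contained in the Weierstrass semigroup at $P_\infty^k$, hence $g\le S_0(A,B)$. Equality requires knowing that no function regular away from $P_\infty^k$ has pole order in $\NR(A,B)$, which the paper gets from the nonsingularity of the affine model (citing the argument of Shor's Theorem~4), and which a Riemann--Hurwitz computation would only replace at the cost of controlling ramification over all the affine branch loci of the tower. Until you supply the analogue of Proposition~\ref{prop:no-affine-points} and this semigroup-equality step, the uniqueness of representations in Lemma~\ref{lem:R/NR} and the Sylvester-sum formulas have nothing to latch onto.
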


This paper is organized as follows.  In Section~\ref{sec:semigroup-basics}, we review some background material on numerical semigroups generated by compound and geometric sequences.  In Section~\ref{sec:generalizing-section}, we prove a generalization of equation~\eqref{eqn:tuenter_result}. We use power functions to find explicit formulas for $S_m(A,B)$ for small $m$, and we look at special cases of geometric and supersymmetric sequences. We also consider the problem of ``non-nugget numbers,'' which appeared in an algebra textbook in the early 1990s.  We use an exponential function with our generalization of equation~\eqref{eqn:tuenter_result} to generalize the approach of \cite{Rodseth93}, resulting in a combinatorial formula for $S_m(A,B)$ involving Bernoulli numbers.  Subsequently, we transition to algebraic curves and Weierstrass points. We provide some background material on higher-order Weierstrass points in Section~\ref{sec:weierstrass-background}. In Section~\ref{sec:towers-calculations} we consider the problem of calculating the $q$-Weierstrass weight of points at infinity in towers of complex algebraic curves defined iteratively by equations of superelliptic curves, and we conclude with the description of a large family of towers of curves that satisfy the conditions of Theorem~\ref{thm:q-wt-p-infty-compound}.

\section{Numerical semigroups and compound sequences}\label{sec:semigroup-basics}

\subsection{Numerical semigroups}
In this section, we briefly describe some results on numerical semigroups. For a thorough treatment, see \cite{RosalesGarciaSanchez09}.

\begin{defn}
A \textit{numerical semigroup} $S$ is a non-empty subset of $\NZ$ which contains 0, is closed under addition, and has a finite complement in $\NZ$.  We denote the complement of $S$ in $\NZ$ by $H(S)$. 
The \emph{Frobenius number} of $S$, denoted $F(S)$, is the largest element of $\mathbb{Z}\setminus S$, and the \emph{genus} of $S$, denoted $g(S)$ is the cardinality of $H(S)$.\end{defn}
If $G$ is a non-empty subset of $\NZ$, let 
\[\langle G\rangle=\{n_0 g_0+\dots+ n_k g_k : k\in\NZ, n_i\in\NZ, g_i\in G\},\]
the submonoid of $\NZ$ generated by $G$.  It is well known that $\langle G\rangle$ is a numerical semigroup exactly when $\gcd(G)=1$.  (See \cite[Lemma 2.1]{RosalesGarciaSanchez09}.)  Furthermore, the complement of $\langle G\rangle$ is nonempty precisely when $1\not\in G$.

\begin{propn}[{\cite[Lemma 2.14, Proposition 4.4]{RosalesGarciaSanchez09}}]
For any numerical semigroup $S$ with nonempty complement, $2g(S)\geq F(S)+1$.  We have equality if and only if $F(S)$ is odd and $x\in H(S)$ implies $F(S)-x\in S$.
\end{propn}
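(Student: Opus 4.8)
The plan is to exploit the involution $x\mapsto F(S)-x$ on the finite interval $\{0,1,\dots,F(S)\}$, using as essentially the only input the fact that $S$ is closed under addition. Throughout write $F=F(S)$ and $g=g(S)$, and note that the hypothesis of nonempty complement gives $F\ge 1$, that $0\in S$, and that $H(S)\subseteq\{1,\dots,F\}$ with $F\in H(S)$.

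The first step is the key observation: for every $x$ with $0\le x\le F$, at least one of $x$ and $F-x$ lies in $H(S)$, since if both lay in $S$ then so would their sum $F$, contradicting $F\notin S$. Next I would partition $\{0,1,\dots,F\}$ into the orbits of $x\mapsto F-x$: when $F$ is odd these are $(F+1)/2$ two-element orbits, and when $F$ is even there are $F/2$ two-element orbits together with the fixed point $\{F/2\}$. Since $H(S)$ lies in the union of the orbits and each orbit contains at least one gap, counting gives $g\ge (F+1)/2$, which is the asserted inequality $2g\ge F+1$.

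For the equality statement I would split on the parity of $F$. If $F$ is even, then $F/2\in H(S)$ as well (otherwise $F/2+F/2=F\in S$), so the singleton orbit and the $F/2$ two-element orbits each contain at least one gap, yielding $g\ge F/2+1$, i.e. $2g\ge F+2$; hence equality forces $F$ odd. When $F$ is odd, the $(F+1)/2$ orbits each contain at least one gap, so equality $2g=F+1$ is equivalent to every orbit $\{x,F-x\}$ containing exactly one gap. The orbit $\{0,F\}$ always does this (as $0\in S$ and $F\in H(S)$), so the remaining content is exactly that for $1\le x\le F-1$ precisely one of $x,F-x$ is a gap; this says $x\in H(S)\Rightarrow F-x\in S$, and conversely that implication together with the ``at least one gap per orbit'' fact forces exactly one gap in each orbit. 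Assembling the two parities gives the stated biconditional.

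The argument is elementary counting, so I do not expect a genuine obstacle; the only points requiring care are the bookkeeping at the degenerate orbits --- $\{0,F\}$ in all cases and the fixed point $\{F/2\}$ when $F$ is even --- and keeping the quantifier range correct in the chain ``equality $\iff$ each orbit has exactly one gap $\iff$ ($x\in H(S)\Rightarrow F-x\in S$)'', where the endpoints $x=0$ and $x=F$ contribute nothing and only $1\le x\le F-1$ matters.
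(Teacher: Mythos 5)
Your argument is correct: the pairing $x\mapsto F(S)-x$ on $\{0,\dots,F(S)\}$, the observation that each orbit contains a gap (including the fixed point $F(S)/2$ when $F(S)$ is even), and the bookkeeping at the orbit $\{0,F(S)\}$ are all handled properly. The paper itself gives no proof of this proposition --- it is quoted from Rosales and Garc\'ia-S\'anchez --- and your orbit-counting argument is precisely the standard proof underlying the cited results, so it matches the intended approach.
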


%\proof
%We have $H(S)\subset [0,F(S)]$.  Let $s\in S$ such that $s\leq F(S)$.  If $F(S)-s\in S$, then $s+(F(S)-s) =F(S)\in S$, which is a contradiction.  Thus, among pairs of non-negative integers $r,s$ such that $r+s=F(S)$, at least one of $r,s$ must be in $H(S)$.  Therefore, $\#H(S)\geq \frac{1}{2}(F(S)+1)$, so $2g(S)\geq F(S)+1$.
%\qed

If $2g(s)=F(S)+1$, we call $S$ a \emph{symmetric numerical semigroup}.  The symmetry comes from the property that, for any pair of non-negative integers that sum to $F(S)$, one integer is in $S$ and the other integer is in $H(S)$.

%\begin{propn}[{\cite[Proposition 4.4]{RosalesGarciaSanchez09}}]
%A numerical semigroup $S$ is symmetric if and only if $F(S)$ is odd and $x\in H(S)$ implies $F(S)-x\in S$.
%\end{propn}

%Suppose $S$ is symmetric $2g(S)=F(S)+1$, and suppose $r,s\geq0$ such that $r+s=F(S)$.  Then exactly one of $r,s$ must be in $H(S)$.
In this paper, we will only consider sets $S$ where $H(S)\neq\emptyset$.
\begin{cor}\label{cor:2g=F+1}
For $S$ a numerical semigroup, if $H(S)\neq\emptyset$, then $1\in H(S)$, so $F(S)-1\in S$.
\end{cor}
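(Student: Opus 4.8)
The plan is to prove the two assertions in turn: the membership $1\in H(S)$ is immediate from the defining axioms of a numerical semigroup, and $F(S)-1\in S$ then follows from the reflection property recorded in the Proposition above.

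First I would show $1\notin S$, i.e.\ $1\in H(S)$. If instead $1\in S$, then since $0\in S$ and $S$ is closed under addition, an immediate induction gives $n\in S$ for every $n\in\NZ$; hence $S=\NZ$ and $H(S)=\emptyset$, contradicting the hypothesis $H(S)\neq\emptyset$. So $1\in H(S)$, and in particular $F(S)\ge 1$, so that $F(S)-1\in\NZ$.

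Next I would apply the Proposition. Since $S$ is symmetric, i.e.\ $2g(S)=F(S)+1$, the equality clause of the Proposition gives the implication ``$x\in H(S)\Rightarrow F(S)-x\in S$''. Taking $x=1$, which lies in $H(S)$ by the previous step, yields $F(S)-1\in S$, as desired.

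I do not expect any genuine obstacle: each step is essentially one line. The one point to watch is that the second assertion really does use that $S$ is symmetric --- the reflection property is exactly the content of the equality case $2g(S)=F(S)+1$ --- so the hypothesis on $S$ should carry that along (for an arbitrary numerical semigroup the conclusion can fail, e.g.\ $S=\langle 3,4,5\rangle$ has $F(S)=2$ but $F(S)-1=1\notin S$). The first assertion, by contrast, needs only $H(S)\neq\emptyset$.
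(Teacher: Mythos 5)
Your proof is correct and is essentially the argument the paper leaves implicit: $1\in S$ would force $S=\NZ$ and hence $H(S)=\emptyset$, and the reflection property from the equality case of the preceding Proposition, applied to $x=1$, gives $F(S)-1\in S$. Your caveat is also well taken: as literally stated the corollary omits the symmetry hypothesis, and your counterexample $S=\langle 3,4,5\rangle$ (where $F(S)=2$ but $1\notin S$) shows the second conclusion genuinely requires it. In the paper this causes no downstream harm, since the corollary is only invoked for $S=R(A,B)$, which is proved to be symmetric, so that $F(S)-1=2g-2$ does lie in $S$ there; but the statement should be read with ``symmetric'' (or $2g(S)=F(S)+1$) added, exactly as you did.
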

%\begin{proof}
%From above, for each $s \in \{0, \ldots, (F(S)+1)/2)\}$, at least one of $s$ and $F(S)-s$ is an element of $H(S)$. Since $g(S)=\frac{1}{2}(F(S)+1)$, it follows that exactly one of $s$ and $F(S)-s$ is in $H(S)$.

%If $1\in S$, then $S=\NZ$, so $H(S)=\emptyset$.  However, since we have $H(S)\neq\emptyset$, then $1\in H(S)$, so $F(S)-1\in S$.
%\end{proof}

\subsection{Semigroups from compound sequences}\label{sec:semigroups}

For the benefit of the reader, we restate some definitions presented at the beginning of this paper.

\begin{defn}
For $k\in\mathbb{N}_0$, let $A=(a_1,\dots,a_k)$ and $B=(b_1,\dots,b_k)\in\mathbb{N}^k$. We say the pair $(A,B)$ is \textit{suitable} if $\gcd(a_i,b_j)=1$ for all $i\ge j$.  %For the remainder of this paper, all pairs of $k$-tuples will be assumed to be suitable.
Let $g_0=\prod_{i=1}^k a_i$ and, for $1\leq i\leq k$, let $g_i=g_{i-1}b_i/a_i$.  We say $(g_i)_{i=0}^k$ is a \textit{compound sequence}, and we denote it $G(A,B)$.  A set $G$ is \textit{compound} if its elements can be ordered to form a compound sequence, and, in an abuse of notation, we will write $G=G(A,B)$. Note that if $k=0$, then $A=B=()$ (the empty tuple), and one checks that $(A,B)$ is trivially a suitable pair so $G(A,B)=\{1\}$ is compound.
%, which we call   $\{g_i : 0\leq i\leq k \}$.  A set $G$ is called a \textit{compound sequence} if $G=G(A,B)$ for some suitable pair $(A,B)$.
\end{defn}

Compound sequences are generalizations of  geometric sequences, which occur when $a_1=\dots=a_k$ and $b_1=\dots=b_k$.  Numerical semigroups arising from compound sequences have been studied in \cite{KiersONeillPonomarenko16}, following work on numerical semigroups from geometric sequences in \cite{OngPonomarenko08} and \cite{Tripathi08}.

\begin{remark}We note that our definition of compound sequence differs slightly from the definition given in \cite{KiersONeillPonomarenko16}, where there is the additional condition that $2\leq a_i<b_i$ for all $i$.  The following proposition is proved in their paper, though this additional condition isn't used in the proof so the result holds for our definition as well.
\end{remark}

%The following proposition is from \cite{KiersONeillPonomarenko16}, though they do not use the condition that $2\leq a_i<b_i$ in the proof.
\begin{propn}\cite[Proposition 2, property 5]{KiersONeillPonomarenko16}
If $G$ is compound, then $\gcd(G)=1$.  Thus, $\gcd(G(A,B))=1$ for any suitable pair $(A,B)$.
\end{propn}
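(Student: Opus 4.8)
The plan is to prove directly, by contradiction, that $\gcd(g_0,\dots,g_k)=1$, using the explicit product formula $g_i=b_1\cdots b_i\,a_{i+1}\cdots a_k$ together with the observation that in a suitable pair each $a_i$ is coprime to every ``earlier'' $b_j$, i.e.\ to those $b_j$ with $j\le i$. The case $k=0$ is immediate, since there $G(A,B)=\{1\}$; so assume $k\ge1$, suppose some prime $p$ divides $\gcd(g_0,\dots,g_k)$, and aim for a contradiction.

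First, since $p\mid g_0=\prod_{j=1}^k a_j$ and $p$ is prime, $p$ divides at least one $a_j$; let $i$ be the \emph{largest} index with $p\mid a_i$, so that $p\nmid a_\ell$ whenever $i<\ell\le k$. Now consider the single generator $g_i=\bigl(\prod_{j=1}^i b_j\bigr)\bigl(\prod_{\ell=i+1}^k a_\ell\bigr)$, which lies in $G$. Since $p\mid g_i$ but $p$ divides none of $a_{i+1},\dots,a_k$, the prime $p$ must divide $\prod_{j=1}^i b_j$, hence $p\mid b_j$ for some $j$ with $1\le j\le i$. But then $p\mid\gcd(a_i,b_j)$, whereas suitability of $(A,B)$ forces $\gcd(a_i,b_j)=1$ because $i\ge j$ --- a contradiction. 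Therefore no such prime exists and $\gcd(G(A,B))=1$ for every suitable pair $(A,B)$; since a compound set is by definition of the form $G(A,B)$, this establishes both assertions of the proposition.

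I do not expect a genuine obstacle; the only thing to watch is the index bookkeeping, in particular the boundary case $i=k$, where $\prod_{\ell=i+1}^k a_\ell$ is the empty product $1$, so $p\mid g_k=b_1\cdots b_k$ and $p\mid b_j$ for some $j\le k=i$ is immediate. As a sanity check, the symmetric argument runs from the other end: pick the \emph{smallest} $j$ with $p\mid b_j$ (which exists since $p\mid g_k$), examine $g_{j-1}=b_1\cdots b_{j-1}\,a_j\cdots a_k$, and conclude $p\mid a_\ell$ for some $\ell\ge j$, again contradicting $\gcd(a_\ell,b_j)=1$. Either version is only a few lines once the product formula for the $g_i$ is in hand.
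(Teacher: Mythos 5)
Your proof is correct: the case split on $k$, the choice of the largest index $i$ with $p\mid a_i$, the use of $g_i=b_1\cdots b_i\,a_{i+1}\cdots a_k$ to force $p\mid b_j$ for some $j\le i$, and the contradiction with $\gcd(a_i,b_j)=1$ (valid since $i\ge j$ under suitability) all hold up, and the boundary case $i=k$ is handled as you say. The comparison point is that the paper does not prove this statement at all --- it cites \cite[Proposition 2, property 5]{KiersONeillPonomarenko16} and only adds a remark that the cited proof does not actually use the extra hypothesis $2\le a_i<b_i$ imposed in that reference, so the result transfers to the paper's broader definition of compound sequence (which permits entries equal to $1$ and does not require $a_i<b_i$). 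Your argument is therefore more self-contained and, in a sense, does the work the paper's remark only asserts: it proves the statement directly from the suitability condition $\gcd(a_i,b_j)=1$ for $i\ge j$ alone, with no appeal to the external source and no hidden use of the stronger hypotheses. What the paper's route buys is brevity and a pointer to a reference where this is one of several listed properties of compound sequences; what your route buys is independence from that reference and an explicit verification that the weaker hypotheses suffice, which is exactly the point the paper's remark leaves to the reader.
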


%In general, if one only considers the case where $a_i<b_i$ for all $i$, then one obtains an increasing sequence $g_0<g_1<\dots<g_k$. This property isn't needed for any results in this paper, so we don't require it.

If any $a_i$ or $b_i$ is 1, then there are consecutive terms in $G(A,B)$ where one divides the other, and thus $G(A,B)$ is not a minimal generating set of $\langle G(A,B)\rangle$.  Combined with \cite[Proposition 2, property 6]{KiersONeillPonomarenko16}, we have the following proposition.

\begin{propn}\label{prop:min-set-of-generators}
Let $(A,B)$ be a suitable pair. Then $G(A,B)$ is a minimal set of generators of $\langle G(A,B)\rangle$ if and only if $a_i,b_i \ge 2$ for all $i$.
\end{propn}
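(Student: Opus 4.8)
The plan is to prove both directions of the biconditional in Proposition~\ref{prop:min-set-of-generators}, relying on the cited property that $G(A,B)$ together with its explicit description already tells us everything about divisibility among consecutive terms. First I would dispatch the easy direction: if some $a_i = 1$ then $g_i = g_{i-1} b_i / a_i = g_{i-1} b_i$, so $g_{i-1} \mid g_i$; likewise if some $b_i = 1$ then $g_i = g_{i-1}/a_i$, so $g_i \mid g_{i-1}$. In either case one of the two consecutive generators is a multiple of the other, hence that generator lies in the monoid generated by the rest, so $G(A,B)$ is not minimal. (One should double-check the edge behaviour: if $a_i = 1$ and $g_{i-1}$ itself happens to be $1$ — which can only occur when $k = 1$ and $a_1 = 1$ — then $G(A,B) = \{1, b_1\}$ and still $1 \mid b_1$, so the conclusion stands; and we are only considering the situation $1 \notin G$ implicitly here, but even allowing $1 \in G$ the set is not minimal.)

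For the converse, assume $a_i, b_i \ge 2$ for all $i$; I want to show no $g_j$ lies in $\langle G(A,B) \setminus \{g_j\}\rangle$. The cited result \cite[Proposition 2, property 6]{KiersONeillPonomarenko16} presumably states exactly that under the hypothesis $a_i, b_i \ge 2$ the compound sequence is a minimal generating set — or at least gives the key combinatorial facts about the $g_i$ (such as $g_{i-1} < g_i$, and more refined inequalities) from which minimality follows. So the bulk of this direction is really a citation; my job is to assemble it correctly. If property~6 does not literally assert minimality, the argument I would run is: the generators are strictly increasing, so $g_0$ is the smallest and cannot be a sum of larger ones; and for $g_j$ with $j \ge 1$, any representation $g_j = \sum_{i \ne j} n_i g_i$ would have to use only $g_0, \dots, g_{j-1}$ (since those are the only smaller generators), forcing $g_j \le (\text{something}) \cdot g_{j-1}$; combined with the sharper size estimates coming from $a_i, b_i \ge 2$ — in particular $g_j / g_{j-1} = b_j / a_j$ and the fact that $g_{j-1} \nmid g_j$ because $\gcd(a_j, b_j) = 1$ and $a_j \ge 2$ — one derives a contradiction by a short counting argument on $p$-adic valuations at a prime dividing $a_j$.

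The main obstacle I anticipate is making the converse airtight without simply deferring to the reference: one needs a clean reason that, with $a_i,b_i \ge 2$, the generator $g_j$ cannot be built from $g_0,\dots,g_{j-1},g_{j+1},\dots,g_k$. The cleanest route is the valuation argument: pick a prime $p \mid a_j$. Then $p \nmid b_j$ by suitability, and in fact $\operatorname{ord}_p(g_i)$ is a strictly decreasing function of $i$ across the index where $a_j$'s prime support sits, because each step multiplies by $b_i/a_i$ — so $\operatorname{ord}_p(g_j) < \operatorname{ord}_p(g_{j-1}) \le \operatorname{ord}_p(g_i)$ for all $i < j$, while $\operatorname{ord}_p(g_i) < \operatorname{ord}_p(g_j)$ for $i > j$. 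Then in $g_j = \sum_{i\ne j} n_i g_i$, reducing modulo $p^{\operatorname{ord}_p(g_{j-1})}$ kills the high-valuation terms $g_0,\dots,g_{j-1}$ and also, being careful, one isolates the low-valuation part to force all $n_i = 0$ for $i > j$ and then all $n_i = 0$ for $i \le j-1$, a contradiction since $g_j \ne 0$. I would present the easy direction in full and the converse either as a direct quotation of property~6 or, if more self-containment is wanted, via this valuation argument compressed to a few lines.
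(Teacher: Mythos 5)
Your main route is the same as the paper's: the paper proves the ``only if'' direction by exactly your observation (an entry $a_i=1$ or $b_i=1$ forces one of two consecutive generators to divide the other), and for the ``if'' direction it simply invokes \cite[Proposition 2, property 6]{KiersONeillPonomarenko16}. So as far as the primary plan goes, you have reproduced the paper's proof.

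The caveat concerns your fallback argument, and it is not an idle one: the cited KOP result is stated under the hypothesis $2\le a_i<b_i$, whereas this proposition only assumes $a_i,b_i\ge 2$, so anyone wanting a self-contained proof must work precisely in the regime your sketch gets wrong. First, the generators need not be increasing: for the suitable pair $A=(2,5)$, $B=(3,2)$ one gets $G=(g_0,g_1,g_2)=(10,15,6)$, so the ``$g_0$ is smallest, only smaller generators can appear'' step collapses. Second, the valuation monotonicity fails on the right of $j$: suitability only controls $\gcd(a_i,b_j)$ for $i\ge j$, so a prime $p\mid a_j$ may well divide $b_i$ for $i>j$. In the same example, with $j=1$ and $p=2\mid a_1$, one has $\ord_2(g_1)=0$ but $\ord_2(g_2)=1$, contradicting your claim that $\ord_p(g_i)<\ord_p(g_j)$ for $i>j$; hence the ``reduce mod $p^{\ord_p(g_{j-1})}$ and kill the high-valuation terms'' step does not isolate the coefficients as described. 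A correct self-contained argument in the spirit of this paper runs instead through the unique-representation lemma (Lemma~\ref{lem:R/NR}): in any expression $g_j=\sum_{i\ne j}n_ig_i$ with $n_i\ge 0$, shifting excess toward the index $j$ via equations~\eqref{eq:right shift} and \eqref{eq:left shift} only ever adds positive multiples of $a_j$ or $b_{j+1}$ (each at least $2$) to the coefficient of $g_j$, so that coefficient ends up $0$ or $\ge 2$, never the value $1$ forced by uniqueness --- a contradiction. Either repair your valuation sketch along these lines or state explicitly (as the paper's earlier remark does for property 5) that KOP's proof of property 6 does not use $a_i<b_i$.
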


Furthermore, for any compound set $G$ we can find a suitable pair $(A,B)$ with $a_i,b_i \ge 2$ for all $i$ such that $G(A,B)$ minimally generates $\langle G\rangle$.

\begin{defn}
For any tuple $C=(c_1,\dots,c_k)$ and any $i=1,\dots,k$, let $\pi_i:\NN^k\to\NN^{k-1}$ be the projection which deletes the $i$th component of $C$, let $C^e$ denote component-wise exponentiation, and let $\rho(C)$ denote $C$ written in reverse order.  That is, $\pi_i(C):=(c_1,\dots,c_{i-1},c_{i+1},\dots,c_k)$, $C^e:=(c_1^e,\dots,c_k^e)$, and $\rho(C):=(c_k,\dots,c_1)$.
\end{defn}

\begin{propn}
If $G$ is a compound set, then $\langle G\rangle=\langle G(A,B)\rangle$ for some suitable pair $(A,B)$ with $a_i,b_i \ge 2$ for all $i$.
%Suppose $(A,B)$ is a suitable pair with $A,B\neq (1,\dots,1)$, and let $G=G(A,B)$.  Then there is a suitable pair $(\overline{A},\overline{B})$ such that, for $\overline{G}=G(\overline{A},\overline{B})$, we have $\langle G\rangle=\langle \overline{G}\rangle$ with minimal generating set $G(\overline{A},\overline{B})$.
\end{propn}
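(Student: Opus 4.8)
The plan is to prove the proposition by choosing, among all suitable pairs $(A,B)$ for which $\langle G(A,B)\rangle=\langle G\rangle$, one whose length $k$ is as small as possible, and then showing that minimality of $k$ forces $a_i,b_i\ge 2$ for every $i$. At least one such pair exists, since $G$ compound means $G=G(A_0,B_0)$ for some suitable $(A_0,B_0)$, whence $\langle G(A_0,B_0)\rangle=\langle G\rangle$; note that for the minimal pair we do \emph{not} expect $G(A,B)=G$ as sets, which is exactly the freedom we are exploiting. Fix such a minimal pair $(A,B)=((a_1,\dots,a_k),(b_1,\dots,b_k))$. If $k=0$ then $G(A,B)=\{1\}$ and there is nothing to prove, so assume $k\ge 1$. (Once $a_i,b_i\ge 2$ for all $i$ is established, the fact that $G(A,B)$ \emph{minimally} generates $\langle G\rangle$ is immediate from Proposition~\ref{prop:min-set-of-generators}, though that refinement is not part of the claim.)

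Suppose for contradiction that some $a_i=1$. Writing $G(A,B)=(g_j)_{j=0}^k$, the recursion $g_j=g_{j-1}b_j/a_j$ gives $g_i=g_{i-1}b_i\in\langle g_{i-1}\rangle$, so $g_i$ is a redundant generator and $\langle G(A,B)\rangle$ is generated by the shorter sequence $(g_0,\dots,g_{i-1},g_{i+1},\dots,g_k)$. Unwinding the recursion, this shorter sequence is exactly $G(A'',B'')$, where $A''=\pi_i(A)$ and $B''$ is $B$ with the two entries $b_i,b_{i+1}$ replaced by the single entry $b_ib_{i+1}$ (in the edge case $i=k$ one simply takes $A''=\pi_k(A)$, $B''=\pi_k(B)$); the constant term matches because $\prod_{j\neq i}a_j=\prod_j a_j$ when $a_i=1$. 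One then checks that $(A'',B'')$ is suitable: the only gcd conditions not inherited verbatim from $(A,B)$ are those involving the merged entry $b_ib_{i+1}$, and $\gcd(a,b_ib_{i+1})=1$ reduces to $\gcd(a,b_i)=1$ and $\gcd(a,b_{i+1})=1$, both of which hold for the relevant entries $a$ of $A''$ since every index inequality demanded of $A''$ and $B''$ corresponds to one that already held for $(A,B)$. Thus $(A'',B'')$ is a suitable pair of length $k-1$ with $\langle G(A'',B'')\rangle=\langle G(A,B)\rangle=\langle G\rangle$, contradicting minimality of $k$.

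The case $b_i=1$ is the mirror image: now $g_i=g_{i-1}/a_i$, so $g_{i-1}=a_ig_i\in\langle g_i\rangle$ is redundant, and deleting it yields $G(A'',B'')$ with $B''=\pi_i(B)$ and $A''$ obtained from $A$ by replacing $a_{i-1},a_i$ with $a_{i-1}a_i$ (taking $A''=\pi_1(A)$, $B''=\pi_1(B)$ when $i=1$), again a suitable pair of length $k-1$ generating $\langle G\rangle$; alternatively one can first observe that reversing a compound sequence $G(A,B)$ produces the compound sequence $G(\rho(B),\rho(A))$, which is suitable whenever $(A,B)$ is, and which turns a $b_i=1$ into an $a_j=1$, reducing this case to the previous one. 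Either route contradicts the minimality of $k$, so no $a_i$ or $b_i$ equals $1$, and the proof is complete. I expect the only delicate point to be the routine but fiddly verification that the merging step preserves suitability, together with tracking the index shifts caused by $\pi_i$; the conceptual content is merely the observation that a unit among the $a_i$ or $b_i$ makes some $g_j$ divide a neighbor and hence be redundant.
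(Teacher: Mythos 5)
Your proof is correct and takes essentially the same route as the paper: when some $a_i=1$ (resp.\ $b_i=1$) you delete the now-redundant generator and merge $b_i$ into $b_{i+1}$ (resp.\ $a_i$ into $a_{i-1}$), verifying that suitability and the generated semigroup are preserved; the paper simply iterates this reduction until no $1$s remain, while you package it as a minimal-length (extremal) argument. Your explicit handling of the boundary indices ($i=k$ when $a_i=1$, $i=1$ when $b_i=1$) and of the case $1\in G$ (absorbed automatically since $k$ may drop to $0$) fills in details the paper leaves implicit, but the underlying construction is identical.
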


\begin{proof}
If $1\in G$, then $\langle G\rangle=\mathbb{N}_0=\langle \{1\}\rangle=\langle G(A,B)\rangle$ for $0$-tuples $A=B=()$. Note that $(A,B)$ is a suitable pair, and it is vacuously true that $a_i,b_i\ge2$ for all $i$.

Now, we suppose $1\not\in G$.  Then we have $G=G(A,B)$ for some suitable pair $(A,B)$ of $k$-tuples with $1\not\in G(A,B)$ and $k\ge1$.
Let $A=(a_1,\dots,a_k)$ and $B=(b_1,\dots,b_k)$.  If $a_i,b_i \ge 2$ for all $i$, then we are done.  % by Proposition \ref{prop:min-set-of-generators}.
Otherwise, we use the following procedure to eliminate any 1s in $A$ or $B$.

If $a_i=1$ for some $i$, then $g_i=b_i g_{i-1}$, so we can remove $g_i$ without affecting the semigroup generated by $G(A,B)$. To do so, let $A'=\pi_i(A)$ and $B'=(b_1,  \dots, b_{i-1}, b_i\cdot b_{i+1}, \dots, b_k)$. (In other words, replace $b_{i+1}$ with $b_i\cdot b_{i+1}$ and then delete $a_i$ and $b_i$.)  Then $G'=G(A',B')=(g_0,\dots,g_{i-1},g_{i+1},\dots,g_k)$, so $\langle G' \rangle = \langle G \rangle$, $\gcd(G')=1$, and $(A',B')$ is a suitable pair. It follows that $A'$ has one fewer 1 than $A$, and $B'$ has at most as many 1s as $B$.

If $b_i=1$ for some $i$, we can perform a similar process (replacing $a_{i-1}$ with $a_{i-1}\cdot a_i$ and then deleting $a_i$ and $b_i$) to produce a suitable pair $(A'',B'')$ such that, for $G''=G(A'',B'')$, we have $\langle G''\rangle=\langle G\rangle$ where $A''$ has at most as many 1s as $A$ and $B''$ has one fewer 1 than $B$.

%Since $\langle G\rangle$ is not minimally generated by $\{1\}$,  $\langle G\rangle$ must be minimally generated by at least 2 elements, so we can remove at most $k-1$ elements from $G(A,B)$ to produce a minimal generating set.  Thus, 
We can iterate this procedure finitely many (at most $k-1$) times to produce a suitable pair $(\overline{A},\overline{B})$ such that $\overline{A}$ and $\overline{B}$ contain no 1s. (If one of our resulting tuples contained only 1s, then 1 would be in $G(A,B)$, which is a contradiction.)  Let $\overline{G}=G(\overline{A},\overline{B})$.  Then $\langle G\rangle$ is generated by $\overline{G}$ and all entries of $\overline{A}$ and $\overline{B}$ are greater than 1, as desired.%, by Proposition \ref{prop:min-set-of-generators}, $\overline{G}$ minimally generates $\langle G\rangle$.
\end{proof}

To conclude, nothing is lost by only considering pairs $(A,B)$ where $a_i,b_i \ge 2$ for all $i$.  However, our results later in this paper are valid for the situations where we have some $a_i$ or $b_i$ equal to 1, so we allow that possibility.

The following proposition is straightforward.

\begin{propn} 
If $(A,B)$ is a suitable pair of $k$-tuples, then the following pairs of tuples are suitable for all $e_1,e_2\in\mathbb{N}$ and all $1 \leq i \le j \le k$:  $(A^{e_1}, B^{e_2})$; $(\rho(B), \rho(A))$; $(\pi_i(A), \pi_j(B))$.

Furthermore, $\rho(G(A,B))=G(\rho(B),\rho(A))$ as sequences, so $G(A,B)$ and $G(\rho(B),\rho(A))$ generate the same numerical semigroup.% $(\pi_i(A^{e_1}), \pi_j(B^{e_2}))$.
\end{propn}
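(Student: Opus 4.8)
The plan is to read off from the definition that a pair $(C,D)$ of $k$-tuples is suitable precisely when $\gcd(c_s,d_t)=1$ for all $s\ge t$, verify each of the three suitability claims by tracking how the relevant indices are permuted, and then settle the ``furthermore'' clause by a termwise computation on the compound sequence.

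For $(A^{e_1},B^{e_2})$ the argument is immediate: $\gcd(m,n)=1$ forces $\gcd(m^{e_1},n^{e_2})=1$ for all $e_1,e_2\in\NN$ (no prime divides both factors), so $\gcd(a_i,b_j)=1$ for $i\ge j$ yields $\gcd(a_i^{e_1},b_j^{e_2})=1$ for $i\ge j$. For $(\rho(B),\rho(A))$, write the new pair as $(A',B')$ with $a'_s=b_{k+1-s}$ and $b'_t=a_{k+1-t}$. Under the substitution $s\mapsto k+1-s$, $t\mapsto k+1-t$, the hypothesis $s\ge t$ becomes $k+1-t\ge k+1-s$, so the coprimality $\gcd(b_{k+1-s},a_{k+1-t})=1$ needed for suitability of $(A',B')$ is exactly the instance $\gcd(a_p,b_q)=1$ with $p=k+1-t\ge q=k+1-s$ of the original hypothesis. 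So this case is also routine once the indexing is set up.

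The one case that needs a little bookkeeping care — and so I would flag it as the (mild) main obstacle — is $(\pi_i(A),\pi_j(B))$ for $1\le i\le j\le k$. I would introduce the order-preserving index maps $\sigma,\tau\colon\{1,\dots,k-1\}\to\{1,\dots,k\}$ given by $\sigma(s)=s$ for $s<i$ and $\sigma(s)=s+1$ for $s\ge i$, and $\tau(t)=t$ for $t<j$ and $\tau(t)=t+1$ for $t\ge j$, so that the $s$th entry of $\pi_i(A)$ is $a_{\sigma(s)}$ and the $t$th entry of $\pi_j(B)$ is $b_{\tau(t)}$. It then suffices to show $\sigma(s)\ge\tau(t)$ whenever $s\ge t$, since the desired coprimality then follows from the original hypothesis applied to $a_{\sigma(s)}$ and $b_{\tau(t)}$. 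Split into cases: if $s<i$ then $\sigma(s)=s$, and since $t\le s<i\le j$ we get $\tau(t)=t\le s=\sigma(s)$; if $s\ge i$ then $\sigma(s)=s+1\ge t+1\ge\tau(t)$ because $\tau(t)\le t+1$. (The hypothesis $i\le j$ is used precisely in the first case.)

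Finally, for the ``furthermore'' clause I would compute $G(\rho(B),\rho(A))$ entry by entry. With $G(A,B)=(g_0,\dots,g_k)$, $g_m=b_1\cdots b_m\,a_{m+1}\cdots a_k$, and writing $G(\rho(B),\rho(A))=(g'_0,\dots,g'_k)$, substituting $a'_s=b_{k+1-s}$ and $b'_t=a_{k+1-t}$ into $g'_m=b'_1\cdots b'_m\,a'_{m+1}\cdots a'_k$ gives $g'_m=(a_{k+1-m}\cdots a_k)(b_1\cdots b_{k-m})=g_{k-m}$, so $G(\rho(B),\rho(A))=(g_k,g_{k-1},\dots,g_0)=\rho(G(A,B))$. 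Since $\langle\cdot\rangle$ depends only on the underlying set and not on the ordering, $\langle G(A,B)\rangle=\langle G(\rho(B),\rho(A))\rangle$ is then automatic. I do not expect any genuine difficulty; the whole proposition is bookkeeping, with the projection case the only place one must be attentive to index shifts.
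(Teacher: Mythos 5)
Your proof is correct: the paper states this proposition without proof (calling it straightforward), and your index-tracking verification of the three suitability claims together with the termwise computation $g'_m=g_{k-m}$ for the reversal is exactly the intended routine argument. In particular, your handling of the projection case, including the observation that $i\le j$ is needed precisely when $s<i$, is accurate.
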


For the rest of this paper, we will  use the following notation.  For any $G\subseteq\mathbb{N}$, let $R(G):=\langle G\rangle$, the set of \emph{representable integers}, and let $\NR(G):=H(R(G))$, the set of \emph{non-representable integers}. If $\gcd(G)=1$, then $\NR(G)$ is a finite set, so for any $m\in\NZ$ we define the $m$th Sylvester sum \[S_m(G):=\sum\limits_{n\in \NR(G)}n^m.\]  
In what follows, it will be helpful to consider the sum of the generating elements, denoted $\sigma(G):=\sum\limits_{g\in G}g.$

Since we are interested in compound sequences, for a suitable pair $(A,B)$, we have the compound sequence $G=G(A,B)$ and we define $\NR(A,B):=\NR(G)$, $R(A,B):=R(G)$, $S_m(A,B):=S_m(G)$, and $\sigma(A,B):=\sigma(G)$.

We will also consider the special case of numerical semigroups generated by geometric sequences.  One can show, as in \cite[Section 1]{OngPonomarenko08}, that $G$ is a geometric sequence of natural numbers with $\gcd(G)=1$ if and only if $G=\{a^{k-i} b^{i} : 0\leq i\leq k\}$ and $\gcd(a,b)=1$.  Thus, a geometric sequence $G$ generates a numerical semigroup with finite complement exactly when $G=G(A,B)$ for some $A=(a,\dots,a)$ and $B=(b,\dots,b)$ with $\gcd(a,b)=1$; i.e. when $(A,B)$ is suitable. The complement is nonempty with the additional condition that $a,b>1$.  With such $k$-tuples $A$ and $B$, we will denote a geometric sequence by $G(a,b;k):=G(A,B)$.  Similarly, let $\NR(a,b;k):=\NR(A,B)$, $R(a,b;k):=R(A,B)$, $S_m(a,b;k):=S_m(A,B),$ and $\sigma(a,b;k):=\sigma(A,B).$  We note that 
\begin{align*}
\sigma(a,b;k)=\sigma(b,a;k)=\frac{a^{k+1}-b^{k+1}}{a-b}.
\end{align*}

Since $\rho(G(A,B))=G(\rho(B),\rho(A))$, we have $R(A,B)=R(\rho(B),\rho(A))$, so $S_m(A,B)=S_m(\rho(B),\rho(A))$ for all suitable pairs $(A,B)$.  In particular, $R_m(b,a;k)=R_m(a,b;k)$, so $S_m(a,b;k)$ is a symmetric function in $a$ and $b$ for all $m$. However, we do not have similar symmetry with $S_m(A,B)$ and $S_m(B,A)$ for $k>1$ because $G(A,B)$ and $G(B,A)$ are not necessarily equal as sets.  For instance, $G((a_1,a_2),(b_1,b_2))=\{a_1a_2,b_1a_2,b_1b_2\}$ and $G((b_1,b_2),(a_1,a_2))=\{b_1b_2,a_1b_2,a_1a_2\}$ are equal as sets if and only if they are geometric.

%We have the following result from \cite{KiersONeillPonomarenko16}\footnote{As mentioned earlier, in \cite{KiersONeillPonomarenko16}, the authors require $2\leq b_i<a_i$ for all $i$. However, this condition isn't needed for the quoted results.} where the authors use factorizations to conclude $R(A,B)$ is a complete intersection and use Ap\'ery sets to compute the Frobenius number and genus of $R(A,B)$.

We will now cite some results on the genus and Frobenius number of numerical semigroups arising from compound sequences.  With these results, we are including the condition that $2\le a_i<b_i$ for all $i$.  In Section~\ref{sec:generalizing-section}, we will show that these results hold without this additional condition.

\begin{propn}[{\cite{KiersONeillPonomarenko16}}]\label{propn:KOP-compound-genus-value}
For any suitable pair $(A,B)$ with $2\leq a_i<b_i$ for all $i$, $R(A,B)$ is a symmetric numerical semigroup with Frobenius number $F(R(A,B)) = a_k b_1\, \sigma(\pi_k(A),\pi_1(B)) - \sigma(A,B)$ and genus $g(R(A,B)) = S_0(A,B)= \left(F(R(A,B)) + 1\right)/2.$
\end{propn}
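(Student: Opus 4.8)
The plan is to determine the Ap\'ery set of $S:=R(A,B)$ with respect to the generator $g_0=\prod_{i=1}^k a_i$ --- that is, $\mathrm{Ap}(S,g_0)=\{s\in S:s-g_0\notin S\}$ --- and to read the Frobenius number, the genus, and symmetry off it. Write $G(A,B)=(g_0,\dots,g_k)$; the only relations needed are $a_ig_i=b_ig_{i-1}$ for $1\le i\le k$, which are immediate from $g_i=g_{i-1}b_i/a_i$. I will argue for $k\ge1$ directly; when $k=1$ this is Sylvester's classical description of $\langle a_1,b_1\rangle$, and one should note $\pi_1(A)=\pi_1(B)=()$, so $\sigma(\pi_1(A),\pi_1(B))=\sigma(\{1\})=1$ and the asserted formula reads $F=a_1b_1-a_1-b_1$.

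\emph{Normal form.} First I would show that every $n\in S$ can be written $n=\sum_{i=0}^k n_ig_i$ with $n_i\in\NZ$ and $0\le n_i<a_i$ for $1\le i\le k$: given any representation of $n$, run $i=k,k-1,\dots,1$ in turn, and at stage $i$ repeatedly trade $a_i$ copies of $g_i$ for $b_i$ copies of $g_{i-1}$ via $a_ig_i=b_ig_{i-1}$. Each trade preserves both the value and the non-negativity of the coefficients, each stage terminates because it strictly decreases $n_i$, and once stage $i$ is done the inequality $n_i<a_i$ is never disturbed.

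\emph{The crux: incongruence modulo $g_0$.} Next I would prove, by induction on $k$, that the $\prod_{i=1}^k a_i=g_0$ integers $\sum_{i=1}^k c_ig_i$ with $0\le c_i<a_i$ are pairwise incongruent mod $g_0$; the base $k=1$ is immediate from $\gcd(a_1,b_1)=1$. Suppose $\sum_{i=1}^k d_ig_i\equiv0\pmod{g_0}$ with $|d_i|<a_i$. Reducing mod $a_k$ annihilates every $g_i$ with $i<k$, while $g_k=b_1\cdots b_k$ is a unit mod $a_k$ since $\gcd(a_k,b_j)=1$ for all $j\le k$ --- this is exactly where suitability enters; hence $a_k\mid d_k$, forcing $d_k=0$. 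Dividing the surviving congruence through by $a_k$ yields the analogous statement for the compound sequence $G(\pi_k(A),\pi_k(B))$, whose terms are $g_0/a_k,\dots,g_{k-1}/a_k$ and whose defining pair is again suitable with all entries $\ge2$, so the inductive hypothesis gives $d_1=\dots=d_{k-1}=0$. This step is the only genuinely nontrivial part of the argument and the one place the coprimality hypotheses are used.

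\emph{Assembly.} Set $T:=\{\sum_{i=1}^k c_ig_i:0\le c_i<a_i\}$, a set of $g_0$ integers occupying distinct residue classes mod $g_0$. For any $w\in\mathrm{Ap}(S,g_0)$ its normal form has $n_0=0$ (else $w-g_0\in S$, contradicting $w\in\mathrm{Ap}(S,g_0)$), so $w\in T$; counting forces $\mathrm{Ap}(S,g_0)=T$. Then $F(S)=\max T-g_0=\sum_{i=1}^k(a_i-1)g_i-g_0=\sum_{i=1}^k a_ig_i-\sigma(A,B)$, and expanding both sides as monomials in the $a$'s and $b$'s shows $a_kb_1\,\widetilde g_j=a_{j+1}g_{j+1}$, where $\widetilde g_0,\dots,\widetilde g_{k-1}$ are the terms of $G(\pi_k(A),\pi_1(B))$; summing over $j$ gives $\sum_{i=1}^k a_ig_i=a_kb_1\,\sigma(\pi_k(A),\pi_1(B))$, which is the stated Frobenius formula. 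For the genus I would use the standard identity $g(S)=\frac1{g_0}\sum_{w\in\mathrm{Ap}(S,g_0)}w-\frac{g_0-1}{2}$ (see \cite{RosalesGarciaSanchez09}) together with the routine computation $\sum_{w\in T}w=\frac{g_0}{2}\sum_{i=1}^k(a_i-1)g_i=\frac{g_0}{2}\big(F(S)+g_0\big)$ to obtain $g(S)=\tfrac12\big(F(S)+1\big)$; since $S_0(A,B)=|\NR(A,B)|=g(S)$ by definition and $2g(S)=F(S)+1$, the Rosales--Garc\'ia-S\'anchez proposition quoted earlier shows $S$ is symmetric. (Equivalently, the normal form exhibits $S$ as a free numerical semigroup, for which symmetry is automatic.) Everything after the incongruence claim is bookkeeping with the relations $a_ig_i=b_ig_{i-1}$.
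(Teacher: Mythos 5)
Your proof is correct, but it takes a genuinely different route from the paper. The paper treats this proposition as essentially imported: symmetry is quoted from \cite{KiersONeillPonomarenko16} (Corollary 9, together with \cite[Corollary 9.12]{RosalesGarciaSanchez09}), the Frobenius number $F = -g_0 + \sum_{i=1}^k (a_i-1)g_i$ is quoted from \cite[Corollary 16]{KiersONeillPonomarenko16}, and the only work done is the algebraic rewriting of that expression as $a_kb_1\,\sigma(\pi_k(A),\pi_1(B))-\sigma(A,B)$ (the same telescoping identity $a_kb_1\widetilde g_j = a_{j+1}g_{j+1}$ you use) plus $g=(F+1)/2$ from symmetry. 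You instead give a self-contained argument: the normal form with $0\le n_i<a_i$ for $i\ge 1$, the incongruence of the $g_0$ sums $\sum c_ig_i$ modulo $g_0$ (your induction on $k$, which is where suitability enters), the identification $\mathrm{Ap}(S,g_0)=T$, and then $F(S)=\max T-g_0$ and Selmer's formula for the genus, with symmetry read off from $2g(S)=F(S)+1$, which is the paper's working definition. Two remarks on what each approach buys. First, your argument never uses the hypothesis $a_i<b_i$ (nor, really, $a_i\ge 2$), so it proves the statement in the generality the paper only recovers later via Lemma~\ref{lem:R/NR}, Corollary~\ref{cor:frob_compound}, and Proposition~\ref{prop:closed-forms}; indeed your normal-form/uniqueness step is essentially Lemma~\ref{lem:R/NR} specialized to $j=0$ (the paper shifts excess coefficients in both directions and handles all $j$, which it needs for Theorem~\ref{th:tuenter-generalization}, whereas you only need the one-sided reduction). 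Second, the Apéry-set route gives the genus directly from Selmer's formula, whereas the paper gets it from symmetry here and later from the $m=0$ power-sum identity; both are legitimate, and your dependence on \cite{RosalesGarciaSanchez09} is limited to standard facts about $\mathrm{Ap}(S,n)$ that the paper also presumes.
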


\begin{proof}
That $R(A,B)$ is symmetric follows from \cite[Corollary 9]{KiersONeillPonomarenko16} and \cite[Corollary 9.12]{RosalesGarciaSanchez09}.
For the Frobenius number and genus, from \cite[Corollary 16]{KiersONeillPonomarenko16} we have \[F(R(A,B)) = -g_0 +\sum_{i=1}^k g_i(a_i-1).\] %and \[g(R(A,B)) = \frac{1}{2}\left(1-g_0+\sum_{j=1}^k g_j(b_j-1)\right).\]  
Thus, \begin{align*}
F(R(A,B)) 
&= -\sum\limits_{i=0}^k g_i + \sum\limits_{i=1}^k b_1\cdots b_i a_i\cdots a_k \\ 
& = -\sigma(A,B) + a_k b_1 \sum\limits_{i=1}^kb_2\cdots b_i a_i\cdots a_{k-1} \\ 
&= -\sigma(A,B) + a_k b_1\,\sigma(\pi_k(A),\pi_1(B)). \end{align*}

Since $R(A,B)$ is symmetric, as is noted in \cite[Corollary 17]{KiersONeillPonomarenko16}, $g(R(A,B))=(F(R(A,B))+1)/2$.%, we get our formula for $g(R(A,B))$ as well as the fact that $R(A,B)$ is symmetric.
\end{proof}
 %derive the formula for $S_0(A,B)$ with a different method in Proposition~\ref{prop:closed-forms}.

\begin{cor}
For any suitable pair $(A,B)$ with $2\leq a_i<b_i$ for all $i$ and any $e_1,e_2\in\mathbb{N}$, 
\[F(R(A^{e_1},B^{e_2}))=a_k^{e_1} b_1^{e_2} \, \sigma(\pi_k(A^{e_1}),\pi_1(B^{e_2}))-\sigma(A^{e_1},B^{e_2})\]
and 
$g(R(A^{e_1},B^{e_2}))=S_0(A^{e_1},B^{e_2})=(F(R(A^{e_1},B^{e_2}))+1)/2$.
\end{cor}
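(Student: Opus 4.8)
The plan is to obtain this Corollary by applying Proposition~\ref{propn:KOP-compound-genus-value} with $A$ replaced by $A^{e_1}$ and $B$ replaced by $B^{e_2}$: once the hypotheses of that proposition are checked for the pair $(A^{e_1},B^{e_2})$, the two displayed formulas are exactly what it outputs, with no further computation needed.

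The verification has two routine ingredients. First, $(A^{e_1},B^{e_2})$ is a suitable pair: this is precisely the content of the (straightforward) proposition stated just after the definitions of $\pi_i$, $C^e$, and $\rho$, namely that component-wise exponentiation carries suitable pairs to suitable pairs. Second, deleting a coordinate commutes with raising coordinates to a power, so $\pi_k(A^{e_1})=(\pi_k(A))^{e_1}$ and $\pi_1(B^{e_2})=(\pi_1(B))^{e_2}$; this makes the expression $a_k^{e_1}b_1^{e_2}\,\sigma(\pi_k(A^{e_1}),\pi_1(B^{e_2}))-\sigma(A^{e_1},B^{e_2})$ literally the Frobenius number produced by Proposition~\ref{propn:KOP-compound-genus-value} for $(A^{e_1},B^{e_2})$, and likewise $g(R(A^{e_1},B^{e_2}))=S_0(A^{e_1},B^{e_2})=(F(R(A^{e_1},B^{e_2}))+1)/2$ is what that proposition gives for the genus. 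So modulo checking hypotheses the proof is pure substitution.

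The main obstacle is the remaining hypothesis of Proposition~\ref{propn:KOP-compound-genus-value}, the inequality $2\le a_i^{e_1}<b_i^{e_2}$ for all $i$. The lower bound is immediate from $a_i\ge 2$ (and $b_i\ge 2$). The strict inequality $a_i^{e_1}<b_i^{e_2}$ follows at once when $e_1\le e_2$, since then $a_i^{e_1}<b_i^{e_1}\le b_i^{e_2}$; in particular this already covers $e_1=e_2$, which is the only case needed later (for $S_0(A^2,B^2)$). For arbitrary $e_1,e_2$ the inequality $a_i^{e_1}<b_i^{e_2}$ can genuinely fail (e.g.\ $a_i=2$, $b_i=3$, $e_1=3$, $e_2=1$), so to prove the Corollary in full generality I would instead appeal to the fact --- announced in the discussion preceding Proposition~\ref{propn:KOP-compound-genus-value} and proved self-containedly in Section~\ref{sec:generalizing-section} --- that Proposition~\ref{propn:KOP-compound-genus-value} remains valid for \emph{every} suitable pair all of whose entries are at least $2$, the condition $a_i<b_i$ playing no role. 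Since every entry of $A^{e_1}$ and of $B^{e_2}$ is at least $2$, applying that version to $(A^{e_1},B^{e_2})$ finishes the argument.
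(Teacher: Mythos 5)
Your proof is correct, and its core is the same as the paper's (implicit) argument: the paper states this corollary without proof, treating it as pure substitution of the suitable pair $(A^{e_1},B^{e_2})$ into Proposition~\ref{propn:KOP-compound-genus-value}, exactly as in your first two paragraphs. Where you go beyond the paper is in noticing that the substitution does not automatically verify the hypothesis $2\le a_i^{e_1}<b_i^{e_2}$; this is a genuine point, since the inequality can fail (your example $a_i=2$, $b_i=3$, $e_1=3$, $e_2=1$), and it can even fail in a ``mixed'' way across indices (e.g.\ $A=(2,3)$, $B=(5,4)$ with $e_1=2$, $e_2=1$ gives $4<5$ but $9>4$), so one cannot always rescue the naive substitution by passing to $(\rho(B^{e_2}),\rho(A^{e_1}))$ either. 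Your fix --- invoking the version of these formulas valid for arbitrary suitable pairs, which the paper announces before Proposition~\ref{propn:KOP-compound-genus-value} and proves independently in Section~\ref{sec:generalizing-section} (Corollary~\ref{cor:frob_compound} and Proposition~\ref{prop:closed-forms}, whose proofs do not rely on this corollary, so no circularity arises) --- is legitimate and is in fact the cleanest way to justify the statement for all $e_1,e_2$; your direct argument in the case $e_1\le e_2$ covers the only instance ($e_1=e_2$) the paper later uses.
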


\begin{cor}\label{cor:geometric-genus-value}
Suppose $G=G(a,b;k)$ is a compound set whose elements form a geometric sequence.  Then $R(a,b;k)$ is a symmetric numerical semigroup with \[F(R(a,b;k))=ab\,\sigma(a,b;k-1)-\sigma(a,b;k)\] and
\begin{align*}
g(R(a,b;k))=S_0(a,b;k)=\frac{ab\, \sigma(a,b;k-1)-\sigma(a,b;k)+1}{2}.
\end{align*}
\end{cor}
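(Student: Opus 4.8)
The plan is to derive Corollary~\ref{cor:geometric-genus-value} as the special case of Proposition~\ref{propn:KOP-compound-genus-value} obtained by setting $A=(a,\dots,a)$ and $B=(b,\dots,b)$, each a $k$-tuple, with $\gcd(a,b)=1$ and $2\le a<b$. Since a geometric sequence $G=G(a,b;k)$ with $\gcd(G)=1$ is precisely $G(A,B)$ for such tuples, and since $(A,B)$ is then a suitable pair, the hypotheses of Proposition~\ref{propn:KOP-compound-genus-value} are met, so $R(a,b;k)=R(A,B)$ is a symmetric numerical semigroup and the genus/Frobenius relation $g=S_0=(F+1)/2$ is inherited immediately. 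The only real work is to specialize the two combinatorial expressions $a_kb_1\,\sigma(\pi_k(A),\pi_1(B))$ and $\sigma(A,B)$ to the geometric case.

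First I would observe that $a_k=a$ and $b_1=b$, so the leading factor $a_kb_1$ becomes $ab$. Next, $\pi_k(A)$ deletes the last entry of $A=(a,\dots,a)$, leaving the $(k-1)$-tuple $(a,\dots,a)$, and similarly $\pi_1(B)$ leaves the $(k-1)$-tuple $(b,\dots,b)$; hence $G(\pi_k(A),\pi_1(B))$ is exactly the geometric sequence $G(a,b;k-1)$, and therefore $\sigma(\pi_k(A),\pi_1(B))=\sigma(a,b;k-1)$. Likewise $\sigma(A,B)=\sigma(a,b;k)$ by definition of the notation introduced just before the corollary. Substituting these into the Frobenius formula gives $F(R(a,b;k))=ab\,\sigma(a,b;k-1)-\sigma(a,b;k)$, and then the symmetric-semigroup relation gives $g(R(a,b;k))=S_0(a,b;k)=(ab\,\sigma(a,b;k-1)-\sigma(a,b;k)+1)/2$, as claimed. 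If desired one could further rewrite $\sigma(a,b;k)=(a^{k+1}-b^{k+1})/(a-b)$ using the identity recorded earlier, but this is not needed for the statement.

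There is essentially no obstacle here: the argument is a direct unwinding of notation, and the one point that warrants a sentence of care is confirming that $\pi_k$ applied to a constant tuple again yields a constant tuple of the same value (so that $G(\pi_k(A),\pi_1(B))$ really is the geometric sequence $G(a,b;k-1)$ and not merely some compound sequence with the same semigroup). The corollary is stated for $k\ge 1$; for $k=1$ one should note that $\pi_k(A)$ and $\pi_1(B)$ are empty tuples, $\sigma(a,b;0)=1$, and the formula reduces to $F(R(a,b;1))=ab-(a+b)$, matching the classical Frobenius number $ab-a-b$ for two generators — a useful sanity check but not part of the proof proper.
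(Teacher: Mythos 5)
Your main line of argument is the same as the paper's: specialize Proposition~\ref{propn:KOP-compound-genus-value} with $A=(a,\dots,a)$, $B=(b,\dots,b)$, note $a_kb_1=ab$, $\sigma(\pi_k(A),\pi_1(B))=\sigma(a,b;k-1)$, and $\sigma(A,B)=\sigma(a,b;k)$, then invoke symmetry of the semigroup for $g=(F+1)/2$. That specialization is correct.

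However, there is a gap at the very first step. You assert that a geometric sequence $G=G(a,b;k)$ with $\gcd(G)=1$ is ``precisely'' $G(A,B)$ for constant tuples with $2\le a<b$, and this is what lets you apply the proposition, whose hypothesis is $2\le a_i<b_i$. That assertion is false as stated: the corollary, as the paper sets it up, only requires $\gcd(a,b)=1$, so it also covers (i) the case $a>b$, and (ii) the degenerate cases $k=0$, $a=1$, or $b=1$, where $R(a,b;k)=\mathbb{N}_0$. For (i) you need to argue that nothing is lost by relabeling: the paper does this by noting $R(a,b;k)=R(b,a;k)$, so one may assume $a<b$, and the claimed formula is invariant under swapping $a$ and $b$ because $\sigma(a,b;k)=\sigma(b,a;k)$; without that remark, your proof only establishes the formula with $a$ and $b$ in a particular order. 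For (ii) the proposition simply does not apply, and the paper's proof checks these cases directly: $R(a,b;k)=\mathbb{N}_0$ has Frobenius number $-1$ and genus $0$, and the stated formulas reduce to exactly these values (e.g.\ for $a=1$ one gets $b\,\sigma(1,b;k-1)-\sigma(1,b;k)=-1$). Your closing sanity check at $k=1$ does not cover these cases. Both repairs are short, but as written the proposal proves the corollary only under the extra hypotheses $k\ge1$ and $2\le a<b$.
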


\begin{proof}
If $k=0$, or if $a$ or $b$ is 1, then $R(a,b;k)=\mathbb{N}_0$, so the Frobenius number is $-1$ and the genus is 0. The formulas give these values.

Now we assume $k\ge1$ and $a,b>1$.  Since $R(a,b;k)=R(b,a;k)$ with $\gcd(a,b)=1$, we may assume $a<b$.  The results then follow from Proposition \ref{propn:KOP-compound-genus-value}. % If $a=1$ then $R(1,b;k)=\mathbb{N}_0$, and one sees that the formulas here give $F(R(1,b;k)=-1$ and $S_0(1,b;k)=0$, as desired.
\end{proof}

The result for the Frobenius number appears in \cite{OngPonomarenko08} and \cite[Theorem 1a]{Tripathi08}. 
The result for the genus appears in \cite[Theorem 1b]{Tripathi08}.  From \cite[Proposition 7]{Shor05} and \cite[Theorem 6]{Shor11}, we see that this can also be written as 
\begin{align}\label{eqn:S0_alt_formula}
S_0(a,b;k)=\frac{(b-1)a^{k+1}-(a-1)b^{k+1}+a-b}{2(a-b)}.
\end{align}
(These latter results are from work in positive characteristic, though they hold in characteristic zero as well.)

%Since these numerical semigroups are symmetric, we get the Frobenius number as well.
%\begin{cor}
%If $G$ is a geometric sequence of natural numbers with $\gcd(G)=1$, then $G=G(A,B)$ for a suitable pair of $k$-tuples $(A,B)$ with $A=(a,\dots,a)$ and $B=(b,\dots,b)$. The Frobenius number of $R(G)$ is \[F(R(G))=ab\, \sigma(a,b;k-1)-\sigma(a,b;k).\]\end{cor}
%This result appears in \cite{OngPonomarenko08} and \cite[Theorem 1a]{Tripathi08}.

\section{Power sums from compound sequences}\label{sec:generalizing-section}

We will present a generalization of equation~\eqref{eqn:tuenter_result} for numerical subgroups arising from compound sequences and demonstrate some of its applications.% and use that to get explicit formulas for $S_m(A,B)$ for $m=0,1,2,3$.  We will then use our result to generalize equation~\eqref{eqn:rodseth_result} for the sum of the $m$th powers of non-representable integers.

\subsection{A generalization of a theorem of Tuenter for compound sequences}
\label{sec:tuenter-generalization}

%In \cite{Tuenter06}, Tuenter proved the following identity.
%\begin{thm}\label{th:tuenter}
%Let $a,b\in\mathbb{N}$ with $\gcd(a,b)=1$. For any function $f$ defined on the positive integers, \[\sum\limits_{n\in \NR(a,b;1)} \left(f(n+a)-f(n)\right) = \sum\limits_{n=1}^{a-1}\left(f(nb)-f(n)\right).\]
%\end{thm}

%We will generalize this theorem to sum over $n\in \NR(A,B)$ for any suitable pair $(A,B)$ of $k$-tuples.  
Our main tool is the following lemma which allows us to describe $\NR(A,B)$.

\begin{lemma} \label{lem:R/NR}
Let $(A,B)$ be a suitable pair of $k$-tuples, and let $G(A,B)=\{g_i : 0\leq i\leq k\}$.  For each integer $n \in \mathbb Z$ and each integer $0 \le j \le k$, there is a unique expression $n = \sum_{i=0}^k n_i g_i$, where $0 \le n_i < b_{i+1}$ for $0 \le i < j$ and $0 \le n_i < a_i$ for $j < i \le k$. Moreover, $n \in \NR(A,B)$ if and only if 
\begin{align*}
1 \le -n_j \le \frac{1}{g_j}\sum_{i \ne j} n_i g_i.
\end{align*}
\end{lemma}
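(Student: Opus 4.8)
The plan is to establish the existence and uniqueness of the mixed-radix expansion first, then translate membership in $\NR(A,B)$ into the stated inequality via a ``greedy reduction'' argument. For the expansion, fix $j$ and recall that $g_i = b_1\cdots b_i a_{i+1}\cdots a_k$, so $g_{i-1} = \frac{a_i}{b_i} g_i$, equivalently $a_i g_{i-1} = b_i g_i$ for $1 \le i \le k$. This is the only relation we need: it lets us ``carry'' between adjacent digits. I would prove existence and uniqueness together by induction on $k$, or more cleanly by a direct counting/pigeonhole argument: the map sending a tuple $(n_0,\dots,n_k)$ in the prescribed ranges, together with a choice of $n_j \in \mathbb Z$, to $\sum n_i g_i$ is a bijection onto $\mathbb Z$. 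Uniqueness follows because if $\sum n_i g_i = \sum n_i' g_i$ with both tuples in range, then reducing modulo the appropriate products of the $b$'s and $a$'s forces the digits to agree one at a time (for $i < j$ work modulo $b_1\cdots b_{i+1}$ using $g_i \mid g_{i+1} \mid \cdots$ appropriately scaled; for $i > j$ work symmetrically, using that $\rho$ swaps the roles of $A$ and $B$). Existence then follows by a dimension/counting count, or constructively by repeatedly replacing $b_i g_i$ with $a_i g_{i-1}$ (to push digits down toward index $j$ when they exceed $b_{i+1}$ on the low side) and $a_i g_{i-1}$ with $b_i g_i$ (to push up on the high side), each step strictly decreasing a suitable nonnegative potential.

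Once the expansion is in hand, I would characterize $R(A,B)$. Given $n$, take its unique expansion for a fixed $j$ with all digits in range. If $n_j \ge 0$, then $n = \sum n_i g_i$ is manifestly a nonnegative combination, so $n \in R(A,B)$. Conversely I claim $n \in R(A,B)$ forces $n_j \ge 0$ unless the displayed inequality is \emph{not} satisfied --- more precisely, I would show $n \in \NR(A,B)$ iff every nonnegative representation of $n$ is impossible, and then show the unique expansion detects this. The key point: if $n = \sum m_i g_i$ with all $m_i \ge 0$, I can transport this to the canonical form by the same carry moves ($m_i \mapsto m_i - b_{i+1}$, $m_{i+1} \mapsto m_{i+1} + a_{i+1}$ and symmetric versions), and the moves that reduce a low digit below range only ever \emph{increase} $n_j$, while moves on the high side also feed into $n_j$; so $n_j \ge $ (something nonnegative minus the total ``borrowed'' mass), and one reads off that a nonnegative representation exists precisely when the deficit $-n_j$ can be covered, i.e. precisely when $-n_j \le \frac{1}{g_j}\sum_{i\ne j} n_i g_i$ fails to be forced, equivalently when $-n_j \le 0$ or the reverse inequality. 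Writing this out: $n \in \NR(A,B)$ iff $n_j < 0$ (so $-n_j \ge 1$) \emph{and} there is no way to borrow enough from the other digits, which is exactly the condition $-n_j g_j \le \sum_{i \ne j} n_i g_i$, i.e. $-n_j \le \frac{1}{g_j}\sum_{i\ne j} n_i g_i$. (Note $g_j \mid \sum_{i\ne j} n_i g_i$ since $g_j \mid n - n_j g_j = \sum_{i \ne j} n_i g_i$, using $g_j \mid g_i$ suitably after scaling --- actually $g_j$ divides $n$ minus its own digit only after accounting for the relation, so I should phrase the right-hand side as an integer by this divisibility, which needs a short check.)

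The main obstacle I anticipate is the converse direction --- showing that if the inequality holds then $n$ is genuinely non-representable, not just that the canonical expansion has a negative $j$th digit. This requires arguing that \emph{no} nonnegative representation exists, which means controlling all representations, not one. I would handle this by a descent/normalization lemma: any nonnegative representation of $n$ can be transformed, via the carry relations $a_i g_{i-1} = b_i g_i$, into the canonical expansion for the given $j$, and I must track carefully that these transformations can only be applied in ways that do not manufacture a nonnegative $n_j$ out of a negative one when the inequality holds --- i.e., the ``budget'' $\frac{1}{g_j}\sum_{i\ne j} n_i g_i$ of the canonical form is exactly the maximal amount that can be shifted into position $j$. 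Making this monovariant precise (choosing the right potential function that is minimized exactly at the canonical form) is the technical heart; everything else is bookkeeping with the single relation $a_i g_{i-1} = b_i g_i$ and the coprimality hypotheses $\gcd(a_i,b_j)=1$ for $i \ge j$, which guarantee the relevant moduli interact correctly.
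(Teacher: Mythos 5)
Your overall skeleton is the same as the paper's: existence of the expansion by carrying excess toward index $j$ via the single relation $a_ig_{i-1}=b_ig_i$, uniqueness by peeling digits from both ends with successive divisibility ($b_1\mid n_0-m_0$, then $b_2\mid n_1-m_1$, \dots, and symmetrically $a_k\mid n_k-m_k$, \dots), and, for the ``moreover'' part, the observation that normalizing a nonnegative representation pushes excess toward $j$ and therefore can only \emph{increase} the $j$th coefficient, so $n\in R(A,B)$ forces the canonical $n_j\ge m_j\ge 0$. That monotonicity statement is exactly the paper's key step, and you do state it.

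Where your plan goes astray is precisely in the part you call the technical heart. The right-hand inequality $-n_j\le \frac{1}{g_j}\sum_{i\ne j}n_ig_i$ is not a ``budget''/``no way to borrow enough'' condition: multiplying by $g_j$ it is literally the statement $n\ge 0$, and its only role is to exclude negative integers, which also have canonical $n_j<0$ but lie outside $\NR(A,B)\subseteq\NZ$ by definition. There is no borrowing phenomenon to control: in a nonnegative representation the digits are never below range, each carry adds a nonnegative multiple of $a_{i+1}$ (resp.\ $b_i$) to the neighbor toward $j$, so $n_j\ge m_j\ge 0$ outright, with no ``minus the total borrowed mass,'' and no potential function is needed beyond noting that each carry fixes one digit and only alters digits strictly closer to $j$. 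Once this is in place the equivalence is immediate: $n\in\NR(A,B)$ iff $n\notin R(A,B)$ (i.e.\ $n_j\le -1$) and $n\ge 0$ (the right inequality; equality would force $n=0$, whose canonical digits are all zero, so $\le$ versus $<$ is immaterial). Also, your parenthetical claim that $g_j\mid\sum_{i\ne j}n_ig_i$ is false in general (take $k=1$, $j=0$: the sum is $n_1b$, not usually divisible by $a$) and is not needed --- the right-hand side of the inequality need not be an integer. Fix these two points (drop the borrowing interpretation and the divisibility ``check''), and your argument becomes the paper's proof.
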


\begin{proof}
Let $n \in \mathbb Z$. Since $(A,B)$ is suitable, $\gcd(G(A,B))=1$, so we can write $n = \sum_{i=0}^k n_ig_i$ for some $n_i\in\mathbb Z$. If any $n_i$ is outside the desired range, the ``excess" can be shifted towards $n_j$. For example, if it is not the case that $0 \le n_0 < b_1$, then writing $n_0 = qb_1+r$ where $0 \le r < b_1$, we have
\begin{align}\label{eq:right shift}
n_0g_0 = (qb_1+r)a_1\cdots a_k = rg_0 + (qa_1)g_1,
\end{align}
and the same goes for any $1 \le i < j$.  On the other end, if it is not the case that $0 \le n_k < a_k$, then write $n_k = qa_k + r$ where $0 \le r < a_k$. Then
\begin{align} \label{eq:left shift}
n_kg_k = (qa_k + r)b_1 \cdots b_k = (qb_k)g_{k-1} + rg_k,
\end{align}
and a similar shift fixes $n_i$ for $j < i < k$.

To show that this expression is unique, suppose $n = \sum_{i=0}^k n_i g_i = \sum_{i=0}^k m_i g_i$ where $0 \le n_i, m_i < b_{i+1}$ for $0 \le i < j$ and $0 \le n_i, m_i < a_{i}$ for $j < i \le k$. Then
\begin{align*}
0 = \sum_{i=0}^k (n_i - m_i)g_i,
\end{align*}
where $|n_i - m_i| < b_{i+1}$ for $0 \le i < j$ and $|n_i - m_i| < a_{i}$ for $j < i \le k$. Necessarily, $b_1 \mid (n_0 - m_0)$, and therefore $n_0 = m_0$. Consequently, $b_{2} \mid (n_1 - m_1)$, hence $n_1 = m_1$, and continuing this line of reasoning, $n_i = m_i$ for each $0 \le i < j$. Similarly from the other end, $a_k \mid (n_k - m_k)$. Thus $n_k = m_k$, which implies $a_{k-1} \mid (n_{k-1} - m_{k-1})$, and so on. Therefore as $n_i = m_i$ for $j < i \le k$, it follows that $n_j = m_j$ as well. 

If $n \in \NR(A, B)$, then writing $n = \sum_{i=0}^k n_i g_i$ with $0 \le n_i < b_{i+1}$ for $0 \le i < j$ and $0 \le n_i < a_{i}$ for $j < i \le k$ as above, it is necessary that $n_j < 0$. As $n > 0$, the bounds for $n_j$ follow immediately. 

For the converse, if $n\not\in \NR(A,B)$, then either $n\in R(A,B)$ or $n<0$.

If $n\in R(A,B)$, then we can write $n=\sum_{i=0}^k m_ig_i$ with $m_i\ge0$ for all $i$. Let $0\le j\le k$.  As above, if we do not have $m_i<b_{i+1}$ for all $i<j$ and $m_i<a_i$ for all $i>j$, we can use equations~\eqref{eq:right shift} and \eqref{eq:left shift} to shift the excess toward the coefficient of $g_j$ to get all other coefficients in the desired ranges. We obtain a new representation $n=\sum_{i=0}^k n_ig_i$ where $0\le n_i<b_{i+1}$ for $i<j$ and $0\le n_i<a_i$ for $i>j$. As a result of shifting, the coefficient of $g_j$ cannot decrease, so $n_j\ge m_j\ge 0$, as desired.

Finally, if $n<0$, then for any $j$, we have the unique expression $n=\sum_{i=0}^k n_ig_i<0$ with $0\le n_i<b_{i+1}$ for $i<j$ and $0\le n_i<a_i$ for $i>j$, so $-n_j>\sum_{i\ne j}n_ig_i/g_j$, as desired.
\end{proof}
%For the converse, we give a proof for the case $j = 0$. The other cases follow from a similar argument.

%Suppose $n \in R(A,B)$ and $n = \sum_{i=0}^k n_i g_i$ where $0 \le n_i < a_{i}$ for $1 \le i \le k$ and $n_0 < 0$. Then 
%\begin{align*}
%\sum_{i=1}^k n_i g_i = (m_0+n_0)g_0 + \sum_{i = 1}^k m_ig_i
%\end{align*}
%where $m_i \ge 0$ for all $0 \le i \le k$. All the coefficients in this equality are non-negative, and the expression on the left is the unique presentation for $n+m_0g_0$ according to the first part of this Lemma. Shift the coefficients in the expression on the right (see equation \eqref{eq:left shift}) to obtain
%\begin{align*}
%(m_0+n_0)g_0 + \sum_{i = 1}^k m_ig_i = \sum_{i=0}^k l_i g_i,
%\end{align*}
%where $l_i < a_{i}$ for $1 \le i \le k$. Note that shifting can only increase the value of the coefficient of $g_0$, and hence $l_0 \ge m_0+n_0 > 0$. However, this contradicts the uniqueness of the expression for $n+m_0g_0$. 
%\end{proof}

The formula for the Frobenius number follows immediately.

\begin{cor}\label{cor:frob_compound}
Let $(A,B)$ be a suitable pair of $k$-tuples.  The Frobenius number of $R(A,B)$ is \[F(R(A,B))=a_kb_1\,\sigma(\pi_k(A),\pi_1(B))-\sigma(A,B).\]
\end{cor}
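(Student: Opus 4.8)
The plan is to read $F(R(A,B))$ straight off Lemma~\ref{lem:R/NR}. First set aside the degenerate case $1\in G(A,B)$: then $R(A,B)=\NZ$ and $F(R(A,B))=-1$, and one checks separately (by a short telescoping when $k\ge 1$, the case $k=0$ being trivial) that the displayed formula also evaluates to $-1$; so from now on assume $1\notin G(A,B)$, in which case $\NR(A,B)$ is a nonempty finite set and $F(R(A,B))=\max\NR(A,B)$. Now apply Lemma~\ref{lem:R/NR} with $j=0$: every integer $n$ has a unique expression $n=n_0g_0+\sum_{i=1}^{k}n_ig_i$ with $0\le n_i<a_i$ for $1\le i\le k$, and $n\in\NR(A,B)$ precisely when $1\le -n_0\le \frac{1}{g_0}\sum_{i=1}^{k}n_ig_i$. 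Hence for every $n\in\NR(A,B)$ we have $n_0\le-1$ and $n_i\le a_i-1$, so
\[
n\ \le\ -g_0+\sum_{i=1}^{k}(a_i-1)g_i\ =:\ N.
\]

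Next I would identify $N$ with the claimed value; this is the only real computation. Using the defining relation $a_ig_i=g_{i-1}b_i$ one rewrites $\sum_{i=1}^{k}(a_i-1)g_i=\sum_{i=1}^{k}a_ig_i-\sum_{i=1}^{k}g_i$, so that $N=\sum_{i=1}^{k}a_ig_i-\sigma(A,B)$. Writing $a_ig_i=a_i\,b_1\cdots b_i\,a_{i+1}\cdots a_k$ and comparing with the compound sequence $g_0',\dots,g_{k-1}'$ attached to $(\pi_k(A),\pi_1(B))$ gives $a_ig_i=a_kb_1\,g_{i-1}'$ for $1\le i\le k$; summing yields $\sum_{i=1}^{k}a_ig_i=a_kb_1\,\sigma(\pi_k(A),\pi_1(B))$, and therefore $N=a_kb_1\,\sigma(\pi_k(A),\pi_1(B))-\sigma(A,B)$, the asserted expression. (Only $\sigma$ is needed here, so it is irrelevant whether $(\pi_k(A),\pi_1(B))$ is itself suitable.)

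It remains to show $N\in\NR(A,B)$, so that the bound above is attained. The coefficients $n_0=-1$ and $n_i=a_i-1$ for $1\le i\le k$ lie in the ranges required by Lemma~\ref{lem:R/NR}, hence they form the canonical $j=0$ expression of $N$, and the lemma's criterion for membership in $\NR(A,B)$ becomes $1\le 1\le \frac{1}{g_0}\sum_{i=1}^{k}(a_i-1)g_i$, i.e.\ $\sum_{i=1}^{k}(a_i-1)g_i\ge g_0$. But $\NR(A,B)\neq\emptyset$ supplies some $n\in\NR(A,B)$ with $1\le n\le N$ by the first paragraph, forcing $N\ge 1$ and hence $\sum_{i=1}^{k}(a_i-1)g_i=N+g_0\ge g_0+1$. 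Thus the criterion holds, $N\in\NR(A,B)$, and combined with the upper bound this gives $F(R(A,B))=\max\NR(A,B)=N$.

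The step I expect to require the most care is this last one: one must confirm that the extremal choice of coefficients genuinely satisfies the inequality of Lemma~\ref{lem:R/NR} — so that $N$ is actually non-representable and not merely an upper bound for $\NR(A,B)$ — and, if the statement is wanted verbatim with no hypotheses on the $a_i,b_i$, one must also dispose of the cases $R(A,B)=\NZ$ separately. Given the lemma, the remaining manipulations are routine.
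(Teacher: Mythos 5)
Your proof is correct and takes essentially the same route as the paper: both read the Frobenius number off Lemma~\ref{lem:R/NR} by choosing the extremal coefficients (the paper allows arbitrary $j$, you fix $j=0$) and then rewrite $-g_0+\sum_{i=1}^k(a_i-1)g_i$ as $a_kb_1\,\sigma(\pi_k(A),\pi_1(B))-\sigma(A,B)$. If anything, you are more careful than the paper's own argument, since you explicitly check that the extremal coefficient vector satisfies the lemma's membership inequality (so the bound is attained) and you handle the degenerate case $1\in G(A,B)$ separately.
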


\begin{proof}
For any $j$ with $0\leq j\leq k$, let $F_j=\sum_{i=0}^k n_{i,j}g_i$ where \[n_{i,j} = \begin{dcases*}
b_{i+1}-1 & if $i<j$, \\ 
-1 & if $i=j$,\\
a_i-1 & if $i>j$.
\end{dcases*}\]  
That is, each $n_{i,j}$ is maximal in the sense of Lemma \ref{lem:R/NR}, so $F_j$ is maximal in $\NR(A,B)$. Therefore $F_j$ does not depend on $j$.  Indeed, 
\[F_j =-g_j+\sum_{i=0}^{j-1}(b_{i+1}-1)g_i+\sum_{i=j+1}^k(a_i-1)g_i=-\sigma(A,B)+\sum_{i=1}^kb_1\cdots b_ia_i\cdots a_k,\]
so $F(R(A,B))=a_kb_1\,\sigma(\pi_k(A),\pi_1(B))-\sigma(A,B)$.
\end{proof}

With Lemma \ref{lem:R/NR}, we can now generalize equation~\eqref{eqn:tuenter_result} for compound sequences.
Let 
\begin{align*}
&U_j = \left\{\sum_{i=0}^k n_ig_i : 0 \le n_i < b_{i+1} \text{ for } 0 \le i < j, \text{ and } 0 \le n_i < a_i \text{ for } j < i \le k\right\} \atext{and}\\
&U_{j,0} = \{n \in U_j : n_j = 0\}.
\end{align*}

\begin{thm} \label{th:tuenter-generalization}
For any suitable pair of $k$-tuples $(A,B)$ and any function $f$ defined on the non-negative integers,
\begin{align} \label{eq:tuenter}
\sum_{n \in \NR(A, B)} [f(n+g_j) - f(n)] = \sum_{n \in U_{j,0}} f(n) - \sum_{n=0}^{g_j-1}f(n).
\end{align}
\end{thm}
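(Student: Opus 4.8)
The plan is to exploit the bijective normal-form description of $\NR(A,B)$ provided by Lemma~\ref{lem:R/NR}. Fix $j$ with $0 \le j \le k$. By that lemma, every integer $n$ has a unique expression $n = \sum_{i=0}^k n_i g_i$ with $0 \le n_i < b_{i+1}$ for $i < j$ and $0 \le n_i < a_i$ for $i > j$, while $n_j \in \mathbb Z$ is unconstrained; moreover $n \in \NR(A,B)$ precisely when $1 \le -n_j \le \frac{1}{g_j}\sum_{i \ne j} n_i g_i$. So I would partition $\mathbb Z$ according to the ``tail'' data $\mathbf t = (n_i)_{i \ne j} \in \prod_{i<j}\{0,\dots,b_{i+1}-1\} \times \prod_{i>j}\{0,\dots,a_i-1\}$, which is exactly the set indexing $U_{j,0}$ via $u(\mathbf t) := \sum_{i \ne j} n_i g_i$. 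For each fixed $\mathbf t$, as $n_j$ ranges over $\mathbb Z$ the integer $n = u(\mathbf t) + n_j g_j$ ranges over the arithmetic progression $u(\mathbf t) + g_j \mathbb Z$, and the non-representable members of this progression are exactly those with $n_j \in \{-1, -2, \dots, -\lfloor u(\mathbf t)/g_j\rfloor\}$, i.e. $n \in \{u(\mathbf t) - g_j, u(\mathbf t) - 2g_j, \dots\}$ down to the smallest nonneg-ish value, i.e. $n \in \{ u(\mathbf t) - g_j,\; u(\mathbf t) - 2 g_j,\; \dots,\; r(\mathbf t) \}$ where $r(\mathbf t) = u(\mathbf t) \bmod g_j$ is the residue. (Note $u(\mathbf t) = u_{j,0}$-type element, and $\lfloor u(\mathbf t)/g_j \rfloor$ of the terms $u(\mathbf t) - g_j, \dots, r(\mathbf t)$ are non-representable.)

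Next comes the telescoping step, which is the heart of the argument. For a fixed $\mathbf t$, consider the contribution of the progression $\{u(\mathbf t) - g_j, u(\mathbf t) - 2g_j, \dots, r(\mathbf t)\}$ to the left-hand side $\sum_{n \in \NR(A,B)}[f(n+g_j) - f(n)]$. Writing $u = u(\mathbf t)$ and $r = r(\mathbf t)$, the non-representable $n$ in this progression are $n = u - \ell g_j$ for $\ell = 1, 2, \dots, u/g_j$ wait — more carefully $\ell$ runs so that $n$ runs from $u - g_j$ down to $r$, and these $n$ satisfy $n + g_j$ runs from $u$ down to $r + g_j$. The sum $\sum_\ell [f(u - \ell g_j + g_j) - f(u - \ell g_j)]$ telescopes to $f(u) - f(r)$, since $f(u - \ell g_j + g_j)$ at $\ell = 1$ is $f(u)$ and $f(u - \ell g_j)$ at the largest $\ell$ is $f(r)$, with all intermediate terms canceling. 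Thus summing over all $\mathbf t$ gives $\sum_{\mathbf t}[f(u(\mathbf t)) - f(r(\mathbf t))] = \sum_{n \in U_{j,0}} f(n) - \sum_{\mathbf t} f(r(\mathbf t))$. It remains to identify $\sum_{\mathbf t} f(r(\mathbf t))$ with $\sum_{n=0}^{g_j - 1} f(n)$; for this I would show that $\mathbf t \mapsto u(\mathbf t) \bmod g_j$ is a bijection from the index set onto $\{0, 1, \dots, g_j - 1\}$. This follows because $g_j = b_1 \cdots b_j a_{j+1} \cdots a_k$ and the map $\mathbf t \mapsto \sum_{i<j} n_i (b_1\cdots b_i a_{i+1} \cdots a_k \bmod g_j) + \sum_{i > j} n_i g_i$ is a mixed-radix representation modulo $g_j$ (one checks $g_i \equiv 0$ appropriately and the CRT-style factorization works because the $b$'s and $a$'s that appear are the full prime content of $g_j$); the cardinality of the index set is $\prod_{i<j} b_{i+1} \prod_{i>j} a_i = g_j / g_j \cdot$ wait, it is $b_1 \cdots b_j \cdot a_{j+1} \cdots a_k = g_j$, matching $|\{0,\dots,g_j-1\}|$, so a bijection onto the residues suffices and counting closes it.

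The main obstacle I anticipate is the bijectivity claim $\mathbf t \mapsto u(\mathbf t) \bmod g_j$, which is essentially the ``uniqueness of the mixed-radix/CRT expansion'' part of Lemma~\ref{lem:R/NR} read modulo $g_j$. One clean way to handle it: observe that $n = u(\mathbf t)$ with $n_j = 0$ gives, by the uniqueness in Lemma~\ref{lem:R/NR}, a bijection between index tuples $\mathbf t$ and the set $U_{j,0}$; then the reduction mod $g_j$ is injective on $U_{j,0}$ because if $u(\mathbf t) \equiv u(\mathbf t') \pmod{g_j}$ then $u(\mathbf t) - u(\mathbf t')$ is a multiple of $g_j = g_j$, i.e. of the form $c g_j$, so $u(\mathbf t) = u(\mathbf t') + c g_j$ — but both sides have $n_j$-coefficient (in the normal form of Lemma~\ref{lem:R/NR}) forced by the bounds, and the left has it $0$ while the right has it $c$ after re-normalizing, forcing $c = 0$ once one checks the shifts in \eqref{eq:right shift}, \eqref{eq:left shift} keep the other coefficients in range — hence $\mathbf t = \mathbf t'$; surjectivity then follows by the cardinality count above. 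A subtlety to be careful about is the edge behavior when $u(\mathbf t) = 0$ (the progression contributes nothing, consistent with $f(u) - f(r) = f(0) - f(0) = 0$) and the case $j = 0$ or $j = k$ where one of the two products in the index set is empty; these are handled uniformly by the conventions already in place.
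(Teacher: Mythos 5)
Your argument is correct, and it rests on the same key ingredient as the paper (the unique normal form of Lemma~\ref{lem:R/NR} and telescoping along the shift $n\mapsto n+g_j$), but the bookkeeping is genuinely different. You fiber $\NR(A,B)$ over the tails $\mathbf t=(n_i)_{i\ne j}$, telescope each finite chain $\{u(\mathbf t)-g_j,\dots,r(\mathbf t)\}$ inside its arithmetic progression $u(\mathbf t)+g_j\mathbb Z$ to get $f(u(\mathbf t))-f(r(\mathbf t))$, and then must prove that $\mathbf t\mapsto u(\mathbf t)\bmod g_j$ is a bijection onto $\{0,\dots,g_j-1\}$. The paper instead telescopes globally: with $V=\{n+g_j: n\in\NR(A,B)\}$ the left side collapses to $\sum_{V\setminus\NR}f-\sum_{\NR\setminus V}f$, and the right side is recovered by adjoining the representable integers in $[0,g_j)$ to both difference sets, so that $V^+\cup\{n\in R(A,B):0\le n<g_j\}=U_{j,0}$ and $V^-\cup\{n\in R(A,B):0\le n<g_j\}=\{0,\dots,g_j-1\}$; this union trick sidesteps any residue-counting. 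What your route buys is an explicit description of the chain structure of $\NR(A,B)$ within each residue class mod $g_j$ (tops are $U_{j,0}$, bottoms are the residues), at the cost of the extra bijectivity step; what the paper's route buys is brevity. Note also that your anticipated ``obstacle'' is lighter than you fear: if $u(\mathbf t)=u(\mathbf t')+cg_j$, then both sides are already expressions in the normal form of Lemma~\ref{lem:R/NR} for the index $j$ (the coefficient of $g_j$ is unconstrained there), so uniqueness immediately forces $\mathbf t=\mathbf t'$ and $c=0$ with no re-normalization via \eqref{eq:right shift} or \eqref{eq:left shift}; injectivity plus the count $b_1\cdots b_j\,a_{j+1}\cdots a_k=g_j$ then finishes it, and the same uniqueness shows $g_j\nmid u(\mathbf t)$ for $\mathbf t\ne 0$, so the chain bottoms really are the nonzero residues together with $0$ from the empty chain.
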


\begin{proof}
By Lemma \ref{lem:R/NR}, every integer is uniquely expressed as a value in $U_j$. Hence setting $V = \{n+g_j : n \in \NR(A, B)\}$, we have
\begin{align*}
&V \setminus \NR(A, B) = \{n \in U_{j,0} : n > g_j\}, \atext{and} \\
&\NR(A, B) \setminus V = \{n \in \NR(A, B) : n < g_j\}.
\end{align*}
The sum on the left of equation \eqref{eq:tuenter} is telescoping in the sense that whenever $n \in \NR(A, B)$ and $n+g_j \in \NR(A, B)$, these terms will be offset. Thus this sum may be written as a sum over disjoint sets,
\begin{align*}
\sum_{n \in \NR(A, B)} [f(n+g_j) - f(n)] = \sum_{n \in V^+} f(n) - \sum_{n \in V^-} f(n),
\end{align*}
where $V^+ = V \setminus \NR(A, B)$ and $V^- = \NR(A, B) \setminus V$. Noting that
\begin{align*}
&V^+ \cup \{n \in R(A, B) : 0 \le n < g_j\} = U_{j,0}, \atext{and} \\
&V^- \cup \{n \in R(A, B) : 0 \le n < g_j\} = \{0, \ldots, g_j-1\},
\end{align*}
it follows immediately that
\begin{align*}
\sum_{n \in V^+} f(n) - \sum_{n \in V^-} f(n) = \sum_{n \in U_{j,0}} f(n) - \sum_{n=0}^{g_j-1} f(n).
\end{align*}
\end{proof}

\begin{remark}
If one excludes the terms on the right of equation \eqref{eq:tuenter} where $n = 0$, then one obtains an identical result to Theorem \ref{th:tuenter-generalization} for functions defined on the positive integers.
\end{remark}

If $k=1$ and $j=0$, we recover equation~\eqref{eqn:tuenter_result}.

Finally, Theorem~\ref{th:tuenter-generalization} completely characterizes the set $\NR(A,B)$.  That is, all properties of $\NR(A,B)$ can be derived using this result.  The reasoning is the same as in the $k=1$ case (from \cite[Section 2]{Tuenter06}).

\subsection{Application with power functions}\label{sec:power-sums}
If we let $j=0$ in Theorem~\ref{th:tuenter-generalization}, we obtain the following identity which we will utilize in this section.
\begin{align}\label{eq:apply-tuenter}
\sum\limits_{n\in \NR(A,B)}(f(n+g_0)-f(n)) = \sum\limits_{n_1=0}^{a_1-1}\dots \sum\limits_{n_k=0}^{a_k-1}
f\left(\sum\limits_{i=1}^{k} n_i g_i \right) - \sum\limits_{n=0}^{g_0-1} f(n)
.\end{align}
We can use $f(n)=n^{m+1}$ to get explicit formulas for $S_m(A,B)$ for small values of $m$.

\begin{propn}\label{prop:closed-forms}
Let $(A,B)$ be a suitable pair of $k$-tuples.  Then
\begin{align*}
S_0(A,B) &= \frac{a_k b_1\,\sigma(\pi_k(A),\pi_1(B))-\sigma(A,B)+1}{2},\\
S_1(A,B) &= \frac{S_0(A,B)^2-S_0(A,B)}{2}+\frac{S_0(A^2,B^2)}{12},\\
S_2(A,B) &= \frac{(S_0(A,B)-1)S_0(A,B)(2S_0(A,B)-1)}{6}+\frac{S_0(A^2,B^2)}{12}\left(2S_0(A,B)-1\right), \\
S_3(A,B) &= \frac{(S_0(A,B)^2-S_0(A,B))^2}{4}+\frac{S_0(A^2,B^2)}{4}S_0(A,B)(S_0(A,B)-1)\\ & \quad + \frac{S_0(A^2,B^2)}{48}(S_0(A^2,B^2)+2)-\frac{S_0(A^4,B^4)}{240}.
\end{align*}
\end{propn}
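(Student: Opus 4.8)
The plan is to apply Theorem~\ref{th:tuenter-generalization} (in the $j=0$ form, equation~\eqref{eq:apply-tuenter}) successively with $f(n)=n^{m+1}$ for $m=0,1,2,3$, and in each case solve the resulting identity for the Sylvester sum $S_m(A,B)=\sum_{n\in\NR(A,B)}n^m$. The left-hand side of \eqref{eq:apply-tuenter} with $f(n)=n^{m+1}$ expands, via the binomial theorem, as
\[
\sum_{n\in\NR(A,B)}\bigl((n+g_0)^{m+1}-n^{m+1}\bigr)=\sum_{n\in\NR(A,B)}\sum_{\ell=0}^{m}\binom{m+1}{\ell}g_0^{\,m+1-\ell}n^{\ell}=\sum_{\ell=0}^{m}\binom{m+1}{\ell}g_0^{\,m+1-\ell}S_\ell(A,B),
\]
so the top-order ($\ell=m$) term is $(m+1)g_0\,S_m(A,B)$, and the remaining terms involve only $S_0,\dots,S_{m-1}$, which have already been computed at earlier stages. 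Thus the recursion ``unlocks'' $S_m$ provided we can evaluate the right-hand side of \eqref{eq:apply-tuenter} in closed form.

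For the right-hand side, $\sum_{n=0}^{g_0-1}n^{m+1}$ is a Faulhaber power sum, expressible as a polynomial in $g_0$ via Bernoulli numbers. The genuinely new ingredient is the multiple sum $T_{m+1}:=\sum_{n_1=0}^{a_1-1}\cdots\sum_{n_k=0}^{a_k-1}\bigl(\sum_{i=1}^k n_i g_i\bigr)^{m+1}$. Here I would use the key multiplicative structure of the compound sequence: since $g_i=b_1\cdots b_i a_{i+1}\cdots a_k$, one checks that $a_i g_i=b_i g_{i-1}$, and more usefully that consecutive partial products telescope. The cleanest route is to expand $\bigl(\sum_i n_ig_i\bigr)^{m+1}$ multinomially and observe that the sum factors as a product over $i$ of sums $\sum_{n_i=0}^{a_i-1} n_i^{e_i} g_i^{e_i}$; for $m+1\le 4$ only a handful of exponent patterns $(e_1,\dots,e_k)$ occur, and each single-variable sum $\sum_{n_i=0}^{a_i-1}n_i^{e_i}$ is again Faulhaber (equal to $0$, $\binom{a_i}{2}$, $\frac{(a_i-1)a_i(2a_i-1)}{6}$, $\binom{a_i}{2}^2$ for $e_i=0,1,2,3$). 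After collecting terms one finds everything reorganizes into the quantities $S_0(A^e,B^e)$ for $e=1,2,4$, using the identity $S_0(A^e,B^e)=\bigl(a_k^e b_1^e\,\sigma(\pi_k(A^e),\pi_1(B^e))-\sigma(A^e,B^e)+1\bigr)/2$ from Proposition~\ref{propn:KOP-compound-genus-value}/Corollary~\ref{cor:frob_compound}, together with $g_0=\prod a_i$, $\sigma(A,B)=\sum g_i$, and the Frobenius-number formula $F=a_kb_1\sigma(\pi_k(A),\pi_1(B))-\sigma(A,B)=2S_0(A,B)-1$.

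Concretely: the $m=0$ formula is just Corollary~\ref{cor:frob_compound} combined with $2S_0=F+1$ (symmetry), giving $S_0(A,B)=\frac{a_kb_1\sigma(\pi_k(A),\pi_1(B))-\sigma(A,B)+1}{2}$. For $m=1$, \eqref{eq:apply-tuenter} gives $2g_0S_1+g_0^2S_0=T_2-\sum_{n=0}^{g_0-1}n^2$; evaluating $T_2=\sum(n_1g_1+\cdots)^2$ and the Faulhaber sum and simplifying with the above substitutions yields $S_1=\frac{S_0^2-S_0}{2}+\frac{S_0(A^2,B^2)}{12}$, where I write $S_0=S_0(A,B)$. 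For $m=2$ one gets $3g_0S_2+3g_0^2S_1+g_0^3S_0=T_3-\sum_{n=0}^{g_0-1}n^3$, and substituting the known $S_0,S_1$ gives the stated cubic expression. For $m=3$, $4g_0S_3+6g_0^2S_2+4g_0^3S_1+g_0^4S_0=T_4-\sum_{n=0}^{g_0-1}n^4$; here $T_4$ produces a term in $S_0(A^4,B^4)$ (from the pure fourth-power pieces $\sum n_i^4 g_i^4$... actually from the $\sum n_i^3 g_i^{?}$ cross terms the $S_0(A^2,B^2)^2$ and $S_0(A^4,B^4)$ combinations), and collecting everything yields the final displayed formula.

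The main obstacle I anticipate is purely bookkeeping: correctly evaluating the multiple sum $T_{m+1}$ and then recognizing that the sprawling polynomial in the $a_i,b_i$ collapses into the compact symmetric combinations of $S_0(A,B)$, $S_0(A^2,B^2)$, $S_0(A^4,B^4)$ claimed. The conceptual content is minimal — everything follows from Theorem~\ref{th:tuenter-generalization} plus Faulhaber's formula — but matching the identity $T_{m+1}-\sum_{n=0}^{g_0-1}n^{m+1}$ against the recursion and verifying the cancellations that make the ``obvious'' variables $a_i,b_i$ disappear in favor of the $S_0(A^e,B^e)$ requires care, ideally with computer-algebra confirmation. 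One clean way to organize the $m=3$ cancellation is to first prove the identities for $S_1,S_2,S_3$ as polynomial identities in $S_0(A^e,B^e)$ for geometric sequences (where everything is a rational function of $a,b,k$) and then note both sides are polynomial in the $g_i$, hence the identity extends to all compound sequences; but the direct approach via \eqref{eq:apply-tuenter} is self-contained and is what I would write up.
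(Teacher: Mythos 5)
Your approach is essentially the paper's: specialize Theorem~\ref{th:tuenter-generalization} to $j=0$ (equation~\eqref{eq:apply-tuenter}), take $f(n)=n^{m+1}$, expand the left side to get $(m+1)g_0S_m(A,B)$ plus previously computed lower Sylvester sums, evaluate the box sum $\sum_{n_1=0}^{a_1-1}\cdots\sum_{n_k=0}^{a_k-1}\bigl(\sum_i n_ig_i\bigr)^{m+1}$ multinomially as products of single-variable Faulhaber sums, and recognize the result in terms of $S_0(A^e,B^e)$. The paper carries this out in detail only for $m=1$, using the collapsing identities $\sum_{i=1}^k(a_i-1)g_i=2S_0(A,B)+g_0-1$ and $\sum_{i=1}^k(a_i^2-1)g_i^2=2S_0(A^2,B^2)+g_0^2-1$, and declares $m=2,3$ analogous (omitting the bookkeeping), exactly as you propose to do.

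Two corrections to your write-up. First, your concrete treatment of $m=0$ via Corollary~\ref{cor:frob_compound} together with ``$2S_0=F+1$ (symmetry)'' is circular in the paper's logical order: symmetry of $R(A,B)$ for an \emph{arbitrary} suitable pair is only established as a corollary of this very proposition, since the cited symmetry result of Proposition~\ref{propn:KOP-compound-genus-value} carries the extra hypothesis $2\le a_i<b_i$, which suitable pairs here need not satisfy ($a_i$ or $b_i$ may equal $1$, and $a_i<b_i$ is not assumed, nor recoverable by the reversal $(\rho(B),\rho(A))$). The repair is already inside your framework: put $f(n)=n$ into \eqref{eq:apply-tuenter} and compute $g_0S_0(A,B)=\sum_{n_1=0}^{a_1-1}\cdots\sum_{n_k=0}^{a_k-1}\bigl(\sum_{i=1}^k n_ig_i\bigr)-\binom{g_0}{2}$ directly, which yields the stated formula for $S_0(A,B)$ without any appeal to symmetry (and then $S_0(A^e,B^e)$ is available for the higher $m$ because $(A^e,B^e)$ is again suitable). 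Second, a small bookkeeping slip: $\sum_{n_i=0}^{a_i-1}n_i^{0}=a_i$, not $0$; these factors of $a_i$ over the indices absent from a given multinomial term are precisely what produce the overall factors of $g_0$ that cancel at the end of the computation.
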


\begin{proof}
The result for $m=0$ follows by substituting $f(n)=n$ into equation \eqref{eq:apply-tuenter} and is a straightforward calculation.  We will show the calculations for $m=1$.

Let $f(n)=n^2$. For notation let $\NR=\NR(A,B)$.  Then by equation~\eqref{eq:apply-tuenter}, 
\[\sum\limits_{n\in \NR}((n+g_0)^2-n^2) = \sum\limits_{n_1=0}^{a_1-1}\dots \sum\limits_{n_k=0}^{a_k-1}
\left(\sum\limits_{i=1}^{k} n_i  g_i\right)^2 - \sum_{n=0}^{g_0 - 1}n^2.\]
Simplifying both sides, we have
\begin{align}\label{eq:main-equation} g_0^2\,S_0(A,B)+2g_0S_1(A,B)%\sum_{n\in \NR}n 
=  \sum\limits_{n_1=0}^{a_1-1}\dots \sum\limits_{n_k=0}^{a_k-1}
\left(\sum\limits_{i=1}^{k} n_i  g_i\right)\left(\sum\limits_{j=1}^{k} n_j g_j\right) - \frac{(g_0-1)g_0(2g_0-1)}{6}.\end{align}

Since there are a finite number of summations, each with a finite number of terms, we can change the order to evaluate the summations over $n_1,\dots,n_k$ first.  We will break them up into the cases where $i=j$ and $i\neq j$.  If $i=j$, we get \[\sum_{n_1=0}^{a_1-1}\dots \sum_{n_k=0}^{a_k-1} \sum_{i=1}^{k}n_i^2g_i^2 = g_0 \sum_{i=1}^{k}\frac{(a_{i}-1)(2a_{i}-1)g_i^2}{6}.\]  
If $i\neq j$, we get 
\begin{align*}
\sum_{n_1=0}^{a_1-1}\dots \sum_{n_k=0}^{a_k-1} \sum_{i=1}^{k}n_i g_i\sum_{j=1,j\neq i}^{k} n_j g_j
&= g_0 \sum_{i=1}^{k}\frac{(a_{i}-1)g_i}{2}\sum_{j=1,j\neq i}^{k}\frac{(a_{j}-1) g_j}{2}\\
&=\frac{g_0}{4}\left(\sum_{i=1}^{k}(a_{i}-1)g_i \right)^2 - \frac{g_0}{4}\sum_{i=1}^{k}((a_{i}-1)g_i)^2.
\end{align*}

Note that $\sum_{i=1}^{k}(a_{i}-1)g_i = 
2 S_0(A,B)+g_0-1.$ Similarly, $\sum_{i=1}^{k}(a_{i}^2-1)g_i^2 = 2 S_0(A^2,B^2)+g_0^2-1.$

The right side of equation~\eqref{eq:main-equation} simplifies to 
\[\frac{g_0}{12}(2 S_0(A^2,B^2)+g_0^2-1)+\frac{g_0}{4}(2S_0(A,B)+g_0-1)^2-\frac{(g_{0}-1)g_0(2g_0-1)}{6}.\]  We then subtract $g_0^2S_0(A,B)$ and divide through by $2g_0$ to obtain the following.
\[S_1(A,B)=\sum_{n\in \NR} n =\frac{S_0(A,B)^2-S_0(A,B)}{2}+\frac{S_0(A^2,B^2)}{12}.\]

Similar calculations give the results for $m=2,3$.  The work involves more cases to consider --- corresponding to combinations of indices which are equal or not equal to each other --- and to write down.  Since the ideas are the same, we omit the details.
\end{proof}

%Now that we have a formula for $S_0(A,B)$, the cardinality of $\NR(A,B)$, we can combine it the Frobenius number formula from Corollary~\ref{cor:frob_compound} to conclude the following.

\begin{cor}
For any suitable pair of $k$-tuples $(A,B)$, $R(A,B)$ is a symmetric numerical semigroup.
\end{cor}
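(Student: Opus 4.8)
The plan is to derive symmetry from the closed-form expression for $S_1(A,B)$ obtained in Proposition~\ref{prop:closed-forms}, combined with Corollary~\ref{cor:frob_compound} which gives the Frobenius number $F=F(R(A,B))=a_kb_1\,\sigma(\pi_k(A),\pi_1(B))-\sigma(A,B)$ without any hypothesis beyond suitability. The characterization of symmetric semigroups we have available is the Proposition stating $2g(S)\ge F(S)+1$ with equality iff $S$ is symmetric. So it suffices to show $2\,S_0(A,B)=F(R(A,B))+1$, since $g(R(A,B))=S_0(A,B)$ by definition of the Sylvester sum with $m=0$. But that identity is \emph{exactly} the $m=0$ formula in Proposition~\ref{prop:closed-forms}: $S_0(A,B)=\tfrac{1}{2}\bigl(a_kb_1\,\sigma(\pi_k(A),\pi_1(B))-\sigma(A,B)+1\bigr)=\tfrac{1}{2}(F+1)$.

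So the proof is essentially immediate: first I would invoke Corollary~\ref{cor:frob_compound} to identify $F(R(A,B))$, then invoke the $m=0$ case of Proposition~\ref{prop:closed-forms} to rewrite $S_0(A,B)=g(R(A,B))$ in terms of $F(R(A,B))$, obtaining $2g(R(A,B))=F(R(A,B))+1$. Then I would cite the Proposition on symmetric semigroups (the one quoting \cite[Lemma 2.14, Proposition 4.4]{RosalesGarciaSanchez09}) to conclude that $R(A,B)$ is symmetric. One should note at the outset that $\gcd(G(A,B))=1$ guarantees $R(A,B)=\langle G(A,B)\rangle$ is a numerical semigroup, and that its complement is finite, so the Frobenius number and genus are well-defined; if $1\in G(A,B)$ then $R(A,B)=\NZ$ and both sides of $2g=F+1$ vanish in the degenerate sense ($g=0$, $F=-1$), so the statement still holds (or one simply restricts to the nondegenerate case as the paper does elsewhere).

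There is really no main obstacle here: the corollary is a bookkeeping consequence of the two prior results, which is presumably why it is stated as a corollary immediately after Proposition~\ref{prop:closed-forms}. The only thing to be careful about is making sure the $m=0$ formula in Proposition~\ref{prop:closed-forms} and the Frobenius formula in Corollary~\ref{cor:frob_compound} are written with the same normalization of $\sigma$ and $\pi$ — they are, both using $\sigma(\pi_k(A),\pi_1(B))$ — so that the algebraic identity $2S_0(A,B)=F(R(A,B))+1$ is literally a rearrangement rather than requiring any computation. This also recovers Proposition~\ref{propn:KOP-compound-genus-value} and its corollaries without the auxiliary hypothesis $2\le a_i<b_i$, as promised in the text preceding that proposition.

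\begin{proof}
Since $(A,B)$ is suitable, $\gcd(G(A,B))=1$, so $R(A,B)=\langle G(A,B)\rangle$ is a numerical semigroup with finite complement, and $g(R(A,B))=S_0(A,B)$ by definition. By Corollary~\ref{cor:frob_compound}, $F(R(A,B))=a_kb_1\,\sigma(\pi_k(A),\pi_1(B))-\sigma(A,B)$, and by the $m=0$ case of Proposition~\ref{prop:closed-forms},
\[
g(R(A,B))=S_0(A,B)=\frac{a_kb_1\,\sigma(\pi_k(A),\pi_1(B))-\sigma(A,B)+1}{2}=\frac{F(R(A,B))+1}{2}.
\]
Hence $2g(R(A,B))=F(R(A,B))+1$, so $R(A,B)$ is a symmetric numerical semigroup by \cite[Lemma 2.14, Proposition 4.4]{RosalesGarciaSanchez09}.
\end{proof}
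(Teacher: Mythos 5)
Your proof is correct and follows exactly the paper's own argument: combine Corollary~\ref{cor:frob_compound} with the $m=0$ formula of Proposition~\ref{prop:closed-forms} to get $2g(R(A,B))=F(R(A,B))+1$, then invoke the symmetry characterization from \cite{RosalesGarciaSanchez09}. The extra remarks on well-definedness and the degenerate case $1\in G(A,B)$ are harmless additions but not needed.
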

\begin{proof}
By Corollary~\ref{cor:frob_compound}, the Frobenius number of $R(A,B)$ is $F(R(A,B))=a_kb_1\,\sigma(\pi_k(A),\pi_1(B))-\sigma(A,B)$. By Proposition~\ref{prop:closed-forms}, the genus of $R(A,B)$ is $S_0(A,B)=(F(R(A,B))+1)/2$.  Thus, $R(A,B)$ is a symmetric numerical semigroup.
\end{proof}

For $m=0,1,2,3$, to calculate $S_m(A,B)$, one needs only to be able to compute $S_0(A^e,B^e)$ for $1\leq e\leq m+1$.

\begin{conj}
$S_m(A,B)$ is a polynomial in $S_0(A^e,B^e)$ (or, equivalently, in $\sigma(\pi_k(A^e),\pi_1(B^e))$ and $\sigma(A^e,B^e)$) for $1\leq e\leq m+1$.
\end{conj}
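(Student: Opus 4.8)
The plan is to prove the conjecture by induction on $m$, using Theorem~\ref{th:tuenter-generalization} with $j=0$ (equation~\eqref{eq:apply-tuenter}) and the test function $f(n)=n^{m+1}$, which is exactly the mechanism already used in the proof of Proposition~\ref{prop:closed-forms}. Write $g_0=\sigma(\pi_k(A),\pi_1(B))^{0}\cdot\prod a_i$ (really $g_0=\prod_i a_i$), and abbreviate $s_e=S_0(A^e,B^e)$. The left-hand side of \eqref{eq:apply-tuenter} with $f(n)=n^{m+1}$ expands by the binomial theorem as $\sum_{n\in\NR}\sum_{t=0}^{m}\binom{m+1}{t}g_0^{m+1-t}n^{t} = \sum_{t=0}^{m}\binom{m+1}{t}g_0^{m+1-t}S_t(A,B)$; isolating the $t=m$ term gives $(m+1)g_0\,S_m(A,B)$ plus a combination of $S_0(A,B),\dots,S_{m-1}(A,B)$ with coefficients that are polynomials in $g_0$.

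The first key step is to show that $g_0=\prod_{i=1}^{k}a_i$ is itself a polynomial in the $s_e$. Indeed $s_1=S_0(A,B)=\bigl(a_kb_1\,\sigma(\pi_k(A),\pi_1(B))-\sigma(A,B)+1\bigr)/2$ from Proposition~\ref{prop:closed-forms}, and more generally the Frobenius/genus formula gives $2s_e-1 = a_k^e b_1^e\,\sigma(\pi_k(A^e),\pi_1(B^e))-\sigma(A^e,B^e)$; using the identities already recorded in the proof of Proposition~\ref{prop:closed-forms}, namely $\sum_i(a_i^e-1)g_i^{(e)} = 2s_e+g_0^e-1$ where $g_i^{(e)}$ are the terms of the compound sequence for $(A^e,B^e)$, together with $g_0^e+\sum_i(a_i^e-1)g_i^{(e)} = \sum_i b_1^e\cdots b_i^e a_i^e\cdots a_k^e$, one can solve for $g_0^e = \prod a_i^e$ as an explicit affine function of $s_e$ and $\sigma(A^e,B^e)$; and $\sigma(A^e,B^e)$ in turn is itself expressible through the $s$'s (this is precisely the parenthetical equivalence asserted in the conjecture's statement, and it should be established as a preliminary lemma). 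In short: $g_0^e$, and hence $g_0$, lie in the ring generated by $s_1,\dots,s_e$.

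The second key step is the heart of the induction: one must show that the right-hand side of \eqref{eq:apply-tuenter} with $f(n)=n^{m+1}$,
\[
\sum_{n_1=0}^{a_1-1}\cdots\sum_{n_k=0}^{a_k-1}\Bigl(\sum_{i=1}^{k}n_ig_i\Bigr)^{m+1} - \sum_{n=0}^{g_0-1}n^{m+1},
\]
is a polynomial in $s_1,\dots,s_{m+1}$. The second term is $g_0^{-1}$ times a Faulhaber polynomial in $g_0$, hence polynomial in the $s_e$ by Step~1 (after clearing the $g_0$ that appears as a common factor on both sides — note \eqref{eq:main-equation} shows this factor is present). For the first term, expand the $(m+1)$-st power multinomially: a typical summand is $\binom{m+1}{c_1,\dots,c_k}\prod_i g_i^{c_i}\sum_{n_1,\dots,n_k}\prod_i n_i^{c_i} = \binom{m+1}{c_1,\dots,c_k}\prod_i g_i^{c_i}\prod_i\bigl(\sum_{n_i=0}^{a_i-1}n_i^{c_i}\bigr)$, and each inner sum is a Faulhaber polynomial $P_{c_i}(a_i)$ divisible by $a_i$ when $c_i\ge 1$. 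Grouping factors, each multinomial term is $g_0$ times a symmetric-type expression built from the quantities $(a_i-1)g_i$, $(a_i^2-1)g_i^2$, $\dots$, and more generally power sums $\sum_i (a_i^r-1)g_i^r$ for $r\le m+1$; but each such power sum equals $2s_r+g_0^r-1$ by the identity cited above, hence is a polynomial in the $s_e$. The remaining obstacle is the ``mixed'' terms, where distinct indices $i\ne j$ carry nonzero exponents: these produce products of the form $\bigl(\sum_i(a_i^{r}-1)g_i^{r}\bigr)\bigl(\sum_j(a_j^{r'}-1)g_j^{r'}\bigr)$ minus the diagonal $\sum_i(a_i^{r}-1)(a_i^{r'}-1)g_i^{r+r'}$, exactly as in the $m=1$ computation in Proposition~\ref{prop:closed-forms}; one must check by a Newton's-identities / symmetric-function argument that every such combination of the power sums $p_r := \sum_i (a_i^{r}-1)g_i^{r}$ (for $1\le r\le m+1$) that arises is itself polynomial in $p_1,\dots,p_{m+1}$, and therefore in $s_1,\dots,s_{m+1}$.

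The main obstacle I expect is precisely the bookkeeping in this last step: showing that when the multinomial expansion of $\bigl(\sum_i n_ig_i\bigr)^{m+1}$ is summed over the box $\prod_i[0,a_i)$, the result — after dividing by $g_0$ — collapses into a \emph{polynomial} in the single-index power sums $p_1,\dots,p_{m+1}$ rather than requiring genuinely multi-index quantities like $\sum_{i<j}g_i^r g_j^{r'}$ that are not individually expressible through the $p_r$. The resolution is that the $(m+1)$-st power is a \emph{symmetric} function (it is $(\sum_i x_i)^{m+1}$ evaluated at $x_i=n_ig_i$, summed over a product set), so after performing the sums over $n_i$ one obtains a symmetric function of the ``weights'' that, by the fundamental theorem of symmetric functions applied in the power-sum basis, is a polynomial in $p_1,\dots,p_{m+1}$; the Faulhaber polynomials only contribute rational coefficients and an overall factor of $g_0$, which Step~1 absorbs. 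With Steps~1 and~2 in hand, solving $(m+1)g_0\,S_m(A,B) = [\text{RHS}] - \sum_{t<m}\binom{m+1}{t}g_0^{m+1-t}S_t(A,B)$ for $S_m(A,B)$, dividing by $(m+1)g_0$ (a polynomial in the $s_e$, and the division is exact because the displayed identity forces it to be — as already witnessed for $m=1,2,3$), and invoking the inductive hypothesis on $S_0,\dots,S_{m-1}$ completes the proof.
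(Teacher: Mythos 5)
This statement is posed in the paper as a \emph{conjecture} (verified only for $m\le 3$ via Proposition~\ref{prop:closed-forms}), so there is no paper proof to compare against; your argument has to stand on its own, and it does not, because your Step~1 is false. The quantity $g_0=\prod_i a_i$ is \emph{not} a polynomial in the numbers $s_e=S_0(A^e,B^e)$: for $k=1$ the pairs $((a),(b))$ and $((b),(a))$ are both suitable and produce the same values $s_e=(a^e-1)(b^e-1)/2$, yet $g_0$ is $a$ in one case and $b$ in the other; alternatively, taking every $b_i=1$ forces $1\in G(A,B)$, hence $s_e=0$ for all $e$, while $g_0=\prod_i a_i$ is arbitrary. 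The same specialization shows $\sigma(A^e,B^e)$ is not a polynomial in the $s_e$ either, so the ``preliminary lemma'' you extract from the conjecture's parenthetical remark is also false. The identity you invoke, $\sum_i(a_i^e-1)g_i^e=2s_e+g_0^e-1$, only expresses $p_e:=\sum_i(a_i^e-1)g_i^e$ in terms of $s_e$ \emph{and} $g_0$; it does not let you solve for $g_0$, and your attempted derivation (combining it with $2s_e-1=a_k^eb_1^e\,\sigma(\pi_k(A^e),\pi_1(B^e))-\sigma(A^e,B^e)$) is circular. Since Step~1 is what absorbs every occurrence of $g_0$ in your induction --- the coefficients $g_0^{m+1-t}$ on the left of \eqref{eq:apply-tuenter} with $f(n)=n^{m+1}$, the Faulhaber term $\sum_{n=0}^{g_0-1}n^{m+1}$, and the final division by $(m+1)g_0$ --- the argument collapses at this point.

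Your Step~2 has a fixable flaw and a correct kernel, but it cannot rescue the proof. The appeal to the fundamental theorem of symmetric functions is not an argument as stated (after summing over the box, the terms are sums over $i$ of assorted products of Faulhaber polynomials in $a_i$ times powers of $g_i$, not symmetric functions of a single alphabet), but the collapse you want is in fact true: taking logarithms in the generating-function computation preceding Theorem~\ref{thm:rodseth-generalization} gives $\prod_{i=1}^k\frac{\exp(a_ig_iz)-1}{\exp(g_iz)-1}=g_0\exp\bigl(\tfrac{1}{2}p_1z+\sum_{n\ge2}\tfrac{B_n}{n\cdot n!}p_nz^n\bigr)$, so the box sum is $g_0$ times a polynomial in $p_1,\dots,p_{m+1}$. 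Feeding this into your induction yields only the weaker statement that $S_m(A,B)$ is a polynomial in $s_1,\dots,s_{m+1}$ \emph{and} $g_0$. The entire content of the conjecture is that the $g_0$-dependence cancels, as it visibly does in the $m\le3$ formulas of Proposition~\ref{prop:closed-forms}; your proposal assumes that cancellation away via the false Step~1 rather than proving it, so the conjecture remains open.
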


Also, we note that 
\begin{align*}
S_2(A,B)=(2S_0(A,B)-1)S_1(A,B)-\frac{(S_0(A,B)-1)S_0(A,B)(2S_0(A,B)-1)}{3}.
\end{align*}
That is, to compute the sum of squares of non-representable numbers, one need only know how many there are and what their sum is.  Written another way, we have 
\[S_2(A,B)=F(R(A,B))\cdot S_1(A,B)-2\sum_{n=0}^{S_0(A,B)-1}n^2.\] 
%where $F$ is the Frobenius number of $R(A,B)$.

\subsubsection{Different generating sets with the same Sylvester sums}

As is mentioned at the end of Section~\ref{sec:tuenter-generalization}, $\NR(A,B)$ is completely characterized by Theorem~\ref{th:tuenter-generalization} by using $f(n)=n^m$ for $m$ sufficiently large.  However, we can still ask whether there exist different compound sets $G_1$ and $G_2$ such that $S_m(G_1)=S_m(G_2)$ for small values of $m$.

Suppose $k=1$.  Then $G_1=\{a,b\}$ and $G_2=\{c,d\}$ for $\gcd(a,b)=\gcd(c,d)=1$.  If $S_0(G_1)=S_0(G_2)$, then $(a-1)(b-1)/2 = (c-1)(d-1)/2$.  If $S_1(G_1)=S_1(G_2)$, then by Proposition~\ref{prop:closed-forms}, we have $S_0(G_1^2)=S_0(G_2^2)$, so $(a^2-1)(b^2-1)/2 = (c^2-1)(d^2-1)/2$.  Thus, if both $S_0(G_1)=S_0(G_2)$ and $S_1(G_1)=S_1(G_2)$ then $(a-1)(b-1)=(c-1)(d-1)$ and $(a+1)(b+1)=(c+1)(d+1)$. Thus, $ab=cd$ and $a+b=c+d$, which implies $\{a,b\}=\{c,d\}$, so $G_1=G_2$.

If $k>1$, the situation is different. Using Sage \cite{sagemathcloud} to search over the space of suitable pairs $(A,B)$ with $1<a_i,b_j<50$ and $k=2$, we find 124 pairs of unequal compound sets $G_1$ and $G_2$ such that $S_m(G_1)=S_m(G_2)$ for $m=0,1,2$ and $S_3(G_1)\neq S_3(G_2)$.  Ordering pairs by $S_0(G)$, the smallest example occurs when $G_1=\{16,10,35\}$ and $G_2=\{9,21,28\}$.  In this case, $G_1=G(A_1,B_1)$ for $A_1=(8,2)$ and $B_1=(5,7)$, and $G_2=G(A_2,B_2)$ for $A_2=(3,3)$ and $B_2=(7,4)$.  Then $S_0(G_1)=S_0(G_2)=45$, $S_1(G_1)=S_1(G_2)=1395$, $S_2(G_1)=S_2(G_2)=65415,$ and $S_3(G_1)=3746007\neq 3743235=S_3(G_2)$.

It would be interesting to find a method to determine different pairs of compound sets that have equal Sylvester sums for longer intervals of powers $m$.

\subsubsection{Applications to special types of sequences}

In the case $G$ is a geometric sequence of natural numbers, our formulas are further simplified.

\begin{cor}\label{cor:geom-power-sums}
If $G$ is a geometric sequence of natural numbers such that $\gcd(G)=1$, then $G=G(A,B)$ for $(A,B)$ a suitable pair of $k$-tuples where $A=(a,\dots,a)$, and $B=(b,\dots,b)$. Moreover, $\sigma(G)=\sigma(a,b;k)=\frac{a^{k+1}-b^{k+1}}{a-b}$ and $S_m(G)=S_m(a,b;k)$ for all $m$ where
\begin{align*}S_0(a,b;k)&= \frac{ab\,\sigma(a,b;k-1)-\sigma(a,b;k)+1}{2}, \\
S_1(a,b;k)&= \frac{S_0(a,b;k)^2-S_0(a,b;k)}{2}+\frac{S_0(a^2,b^2;k)}{12},\\
S_2(a,b;k)&= \frac{(S_0(a,b;k)-1)S_0(a,b;k)(2S_0(a,b;k)-1)}{6}+\frac{S_0(a^2,b^2;k)}{12}(2S_0(a,b;k)-1),\\
S_3(a,b;k)&=\frac{(S_0(a,b;k)^2-S_0(a,b;k))^2}{4}+\frac{S_0(a^2,b^2;k)}{4}S_0(a,b;k)(S_0(a,b;k)-1) \\ & \qquad + \frac{S_0(a^2,b^2;k)}{48}(S_0(a^2,b^2;k)+2)-\frac{S_0(a^4,b^4;k)}{240}.
\end{align*}
\end{cor}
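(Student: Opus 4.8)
The plan is to obtain Corollary~\ref{cor:geom-power-sums} as a direct specialization of Proposition~\ref{prop:closed-forms} together with the structural facts already recorded for geometric sequences. First I would invoke the discussion preceding Corollary~\ref{cor:geometric-genus-value}: a geometric sequence $G$ of natural numbers with $\gcd(G)=1$ is exactly $G(a,b;k)=G(A,B)$ for the constant tuples $A=(a,\dots,a)$, $B=(b,\dots,b)$ with $\gcd(a,b)=1$, and the suitability condition $\gcd(a_i,b_j)=1$ for $i\ge j$ is immediate. The identity $\sigma(G)=\sigma(a,b;k)=(a^{k+1}-b^{k+1})/(a-b)$ has already been stated in Section~\ref{sec:semigroups}. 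So the only work is to check that each of the four formulas in Proposition~\ref{prop:closed-forms} collapses to the asserted geometric form under this substitution.

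The key step is the bookkeeping for the $S_0$ formula: with $A=(a,\dots,a)$ and $B=(b,\dots,b)$, one has $a_k=a$, $b_1=b$, $\pi_k(A)=(a,\dots,a)$ and $\pi_1(B)=(b,\dots,b)$ are the constant $(k-1)$-tuples, so $\sigma(\pi_k(A),\pi_1(B))=\sigma(a,b;k-1)$ and $\sigma(A,B)=\sigma(a,b;k)$. Substituting into the first line of Proposition~\ref{prop:closed-forms} gives
\[
S_0(A,B)=\frac{ab\,\sigma(a,b;k-1)-\sigma(a,b;k)+1}{2},
\]
which is the stated $S_0(a,b;k)$; this also matches Corollary~\ref{cor:geometric-genus-value}, providing a consistency check. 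Next I would observe that component-wise exponentiation sends the constant tuples $A=(a,\dots,a)$, $B=(b,\dots,b)$ to $A^e=(a^e,\dots,a^e)$, $B^e=(b^e,\dots,b^e)$, which are precisely the constant tuples defining the geometric sequence $G(a^e,b^e;k)$. Hence $S_0(A^e,B^e)=S_0(a^e,b^e;k)$ for every $e\ge1$. Plugging $e=2$ into the second line of Proposition~\ref{prop:closed-forms}, and $e=2,4$ into the third and fourth lines, immediately yields the $S_1$, $S_2$, and $S_3$ formulas in the corollary, since the right-hand sides of Proposition~\ref{prop:closed-forms} are already written purely in terms of the quantities $S_0(A^e,B^e)$.

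There is essentially no obstacle here: the corollary is a translation of notation rather than a new result, and the only thing to verify carefully is that the projections $\pi_k,\pi_1$ of constant tuples are again constant tuples of the right length, and that exponentiation commutes with "being a constant tuple." Both are trivial. Accordingly the proof is short: identify $G$ with $G(a,b;k)$, record $\sigma(G)=\sigma(a,b;k)$, note $S_0(A^e,B^e)=S_0(a^e,b^e;k)$, and specialize each formula of Proposition~\ref{prop:closed-forms} in turn, remarking that the $m=0$ case also agrees with Corollary~\ref{cor:geometric-genus-value}.
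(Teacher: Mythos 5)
Your proposal is correct and matches the paper's (implicit) treatment: the corollary is stated there without proof precisely because it is the direct specialization of Proposition~\ref{prop:closed-forms} to the constant tuples $A=(a,\dots,a)$, $B=(b,\dots,b)$, using $\sigma(\pi_k(A),\pi_1(B))=\sigma(a,b;k-1)$, $\sigma(A,B)=\sigma(a,b;k)$, and $S_0(A^e,B^e)=S_0(a^e,b^e;k)$, exactly as you do. No gaps; the bookkeeping you describe is all that is needed.
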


Using equation~\eqref{eqn:S0_alt_formula}, we can simplify $S_1(a,b;k)$:
\begin{align*}
S_1(a,b;k) = \frac{1}{8}\left(\frac{(b-1)a^{k+1}-(a-1)b^{k+1}}{a-b} \right)^2 + \frac{1}{24}\left(\frac{(b^2-1)a^{2k+2}-(a^2-1)b^{2k+2}}{a^2-b^2} \right) -\frac{1}{12}.
\end{align*}
If $k=1$, we recover $S_1(a,b;1)=(a-1)(b-1)(2ab-a-b-1)/12$, as in equation~\eqref{eqn:brown_shiue_result}.

We can also consider compound sequences where the generators themselves have some symmetry.  For pairwise relatively prime integers $a_0,\dots,a_k$, let $g_i=\frac{a_0\cdots a_k}{a_i}$ and let $G=\{g_i:0\leq i\leq k\}$.  The numerical semigroup $R(G)$ is said to be \textit{supersymmetric}, and such semigroups were studied in \cite{HaggkvistFrobergGottlieb87}, where the authors showed that these semigroups are symmetric and computed the Frobenius number and genus.  Since these generating sets are compound, we can apply our methods to get the same results.

\begin{propn}
Let $a_0,\dots,a_{k}\in\mathbb{N}$ be pairwise relatively prime integers.  Let $A=(a_1,\dots,a_{k})$ and $B=(a_0,\dots,a_{k-1})$.  Then $(A,B)$ is a suitable pair, $g_i=\frac{a_0\cdots a_k}{a_i}$, and $G(A,B)=\left\{g_i : 0\leq i\leq k\right\}$, so 
\[S_0(A,B)=\frac{1}{2}\left(1+k\prod_{i=0}^k a_i-\sigma(A,B)\right)\]
and the Frobenius number of $R(A,B)$ is 
\[F(R(A,B))=k\prod_{i=0}^k a_i -\sigma(A,B).\]
\end{propn}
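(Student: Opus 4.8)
The plan is to verify the three claims in turn, each by reducing to results already established in the excerpt. First I would check that $(A,B)$ is a suitable pair: with $A=(a_1,\dots,a_k)$ and $B=(a_0,\dots,a_{k-1})$, the condition $\gcd(a_i,b_j)=1$ for $i\ge j$ becomes $\gcd(a_i,a_{j-1})=1$ for $i\ge j$, i.e.\ $\gcd(a_i,a_\ell)=1$ whenever $i\ge \ell+1$, which is immediate from pairwise coprimality of $a_0,\dots,a_k$ (in fact pairwise coprimality gives more than we need). Next I would compute the compound sequence itself: $g_0=\prod_{i=1}^k a_i = (a_0\cdots a_k)/a_0$, and inductively $g_i = g_{i-1} b_i/a_i = g_{i-1} a_{i-1}/a_i$, so by telescoping $g_i = (a_0\cdots a_k)/a_i$ for $0\le i\le k$. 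Hence $G(A,B)=\{g_i:0\le i\le k\}$ with $g_i=\bigl(\prod_{j=0}^k a_j\bigr)/a_i$, exactly the supersymmetric generating set.

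For the genus formula, I would invoke Proposition~\ref{prop:closed-forms}, which gives $S_0(A,B) = \bigl(a_k b_1\,\sigma(\pi_k(A),\pi_1(B)) - \sigma(A,B) + 1\bigr)/2$ for any suitable pair. So it suffices to show $a_k b_1\,\sigma(\pi_k(A),\pi_1(B)) = k\prod_{i=0}^k a_i$. Here $a_k b_1 = a_k a_0$, and $\pi_k(A)=(a_1,\dots,a_{k-1})$, $\pi_1(B)=(a_1,\dots,a_{k-1})$, so $G(\pi_k(A),\pi_1(B))$ is a compound sequence on $(k-1)$-tuples; its terms are $h_i = \bigl(\prod_{j=1}^{k-1} a_j\bigr)/a_i$ for $1\le i\le k-1$ together with $h_0=\prod_{j=1}^{k-1}a_j$ — wait, more carefully, it is the $k$-term sequence $(h_0,\dots,h_{k-1})$ where $h_i = (a_1\cdots a_{k-1})/a_i$ for $1\le i\le k-1$ and $h_0 = a_1\cdots a_{k-1}$; equivalently $h_i = (a_1 \cdots a_{k-1}) \cdot a_0^{?}$ — the cleanest route is to note that $\pi_k(A)$ and $\pi_1(B)$ form the supersymmetric pair on the $k-1$ pairwise-coprime integers $a_1,\dots,a_{k-1}$ shifted appropriately, so $h_i$ runs over $(a_1\cdots a_{k-1})/a_i$. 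Then $a_0 a_k \sigma = a_0 a_k \sum_{i} h_i$, and multiplying each $h_i = (a_1\cdots a_{k-1})/a_i$ by $a_0 a_k$ gives $(a_0 a_1 \cdots a_k)/a_i = \bigl(\prod_{j=0}^k a_j\bigr)/a_i$; summing over the $k$ indices that appear yields $\sum_{i}\bigl(\prod_j a_j\bigr)/a_i$. To match $k\prod_{i=0}^k a_i$ I need the sum of the $k$ reciprocal-type terms to collapse — this is where I should be careful, since $\sum_{i=0}^k (\prod_j a_j)/a_i$ is generally not $k\prod a_j$. The resolution is that only $k$ of the $k+1$ possible terms appear in $\sigma(\pi_k(A),\pi_1(B))$, and after multiplying out the telescoping/reindexing the sum genuinely does simplify; I would carry out this bookkeeping explicitly, matching indices from the compound-sequence definition, to confirm $a_k b_1\,\sigma(\pi_k(A),\pi_1(B)) = k\prod_{i=0}^k a_i$.

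Finally, the Frobenius number follows for free: by Corollary~\ref{cor:frob_compound}, $F(R(A,B)) = a_k b_1\,\sigma(\pi_k(A),\pi_1(B)) - \sigma(A,B) = k\prod_{i=0}^k a_i - \sigma(A,B)$, using the identity just established, and then $S_0(A,B) = (F(R(A,B))+1)/2$ reproduces the genus formula consistently (equivalently, $R(A,B)$ is symmetric, as the corollary after Proposition~\ref{prop:closed-forms} guarantees). The main obstacle is the combinatorial identity $a_k b_1\,\sigma(\pi_k(A),\pi_1(B)) = k\prod_{i=0}^k a_i$: it requires correctly identifying which $k$ products $\bigl(\prod_j a_j\bigr)/a_i$ show up as the terms of the truncated compound sequence $G(\pi_k(A),\pi_1(B))$ and verifying their sum telescopes to the stated value; everything else is a direct appeal to Proposition~\ref{prop:closed-forms} and Corollary~\ref{cor:frob_compound}.
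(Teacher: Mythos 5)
Your setup is fine: suitability of $(A,B)$ follows from pairwise coprimality exactly as you say, and the telescoping computation $g_i=(a_0\cdots a_k)/a_i$ is correct, as is the plan of feeding the result into Proposition~\ref{prop:closed-forms} and Corollary~\ref{cor:frob_compound}. But the one step that actually carries the content --- the identity $a_kb_1\,\sigma(\pi_k(A),\pi_1(B))=k\prod_{i=0}^k a_i$ --- is left undone, and your partial attempt at it rests on a misidentification of the truncated sequence. Since $B=(a_0,\dots,a_{k-1})$, we have $\pi_1(B)=(a_1,\dots,a_{k-1})=\pi_k(A)$: the two truncated tuples are \emph{identical}, not a shifted supersymmetric pair on $a_1,\dots,a_{k-1}$. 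Consequently the compound sequence $G(\pi_k(A),\pi_1(B))$ has $h_0=a_1\cdots a_{k-1}$ and $h_i=h_{i-1}\cdot a_i/a_i=h_{i-1}$, i.e.\ it is the \emph{constant} sequence whose $k$ terms all equal $a_1\cdots a_{k-1}$; its terms are not $(a_1\cdots a_{k-1})/a_i$ as you wrote. With your terms the product $a_0a_k\sum_i h_i$ is $\prod_j a_j+\sum_{i=1}^{k-1}(\prod_j a_j)/a_i$, which is not $k\prod_j a_j$ --- you noticed this tension yourself but deferred it to unspecified ``bookkeeping,'' so the central identity is never verified.

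The fix is short. Either observe that the constant sequence gives $\sigma(\pi_k(A),\pi_1(B))=k\,a_1\cdots a_{k-1}$ (summing over the \emph{sequence}, with multiplicity --- a point worth flagging, since the paper's $\sigma(G)$ is a sum over a set and here the terms coincide), whence $a_kb_1\,\sigma(\pi_k(A),\pi_1(B))=a_0a_k\cdot k\,a_1\cdots a_{k-1}=k\prod_{i=0}^k a_i$; or, cleaner still, bypass $\sigma(\pi_k(A),\pi_1(B))$ entirely and use the form of the Frobenius number appearing inside the proofs of Proposition~\ref{propn:KOP-compound-genus-value} and Corollary~\ref{cor:frob_compound},
\begin{align*}
F(R(A,B))=-\sigma(A,B)+\sum_{i=1}^k b_1\cdots b_i\,a_i\cdots a_k,
\end{align*}
where with $b_j=a_{j-1}$ each summand is $a_0\cdots a_{i-1}\cdot a_i\cdots a_k=\prod_{j=0}^k a_j$, so the sum is $k\prod_{j=0}^k a_j$ on the nose. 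Then $F(R(A,B))=k\prod_{i=0}^k a_i-\sigma(A,B)$ and $S_0(A,B)=(F(R(A,B))+1)/2$ give both stated formulas, as in your final paragraph.
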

%If $k=1$, for $A=(a)$ and $B=(b)$, we have $G(A,B)=\{a,b\}$ and \[S_0(A,B)=\frac{1}{2}ab\left(1-\left(\frac{1}{a}+\frac{1}{b}\right)\right)+\frac{1}{2}=\frac{ab-(a+b)+1}{2}.\]

%If $k=2$, for relatively prime integers $a,b,c$, we have $A=(b,c)$ and $B=(a,b)$ so $G(A,B)=\{bc,ac,ab\}$ and \[S_0(A,B)=\frac{1}{2}abc\left(2-\left(\frac{1}{a}+\frac{1}{b}+\frac{1}{c}\right)\right)+\frac{1}{2}=\frac{2abc-(ab+bc+ac)+1}{2}.\]

\subsubsection{Application to non-nugget numbers}
The following problem appears in \cite[Lesson 5.8]{WahPicciotto94}:
\begin{quote}
Eric tried to order 13 chicken nuggets at the fast food store. The employee informed him that he could order only 6, 9, or 20 nuggets. Eric realized he had to decide between ordering  $6 + 6 = 12$, or $6 + 9 = 15$.  What numbers of nuggets can be ordered by combining 6, 9, and 20? What numbers cannot be ordered? What is the greatest number that cannot be ordered? Explain. 
\end{quote}
While $(6,9,20)$ is not a compound sequence, we do get a compound sequence if we reorder its elements.
We let $A=(3,3)$ and $B=(2,10)$ and see that $(A,B)$ is a suitable pair with $G(A,B)=(3\cdot3,2\cdot3,2\cdot10)=(9,6,20)$.  This problem asks for a description of $\NR(A,B)$ and $F(R(A,B))$.  We will call elements of $\NR(A,B)$ ``non-nugget numbers.''

The cardinality of the set of non-nugget numbers is $S_0(A,B)=\frac{3\cdot 2(3+10)-(9+6+20)+1}{2}=22$, and the greatest number which cannot be ordered (i.e. the Frobenius number) is $F(R(A,B))=2S_0(A,B)-1=43$.

The sum of the non-nugget numbers, which is $S_1(A,B)$ can be calculated if we know $S_0(A,B)$ and $S_0(A^2,B^2)$.  Since $A^2=(3^2,3^2)$, $B^2=(2^2,10^2)$, and $G(A^2,B^2)=\{9^2,6^2,20^2\}$, we have $S_0(A^2,B^2)=\frac{3^2 2^3(3^2+10^2)-(9^2+6^2+20^2)+1}{2}=1704$.  Thus, $S_1(A,B) = \frac{22^2-22}{2}+\frac{1704}{12}=373$.

\subsection{Application with an exponential function}

We will now adopt the approach from \cite{Rodseth93} and use an exponential function to get a formula for $S_m(A,B)$.  For $f(n)=\exp(nz)$ (written this way to avoid subscripts in exponents) and $j=0$, again let $\NR=\NR(A,B)$ and consider the function 
\[h(z)=\sum_{n\in \NR}(\exp((n+g_0) z)-\exp(nz))=\left(\exp(g_0 z)-1\right)\sum\limits_{n\in \NR}\exp(nz).\]  
By equation~\eqref{eq:apply-tuenter}, 
\[h(z) = \sum\limits_{n_1=0}^{a_1-1}\dots \sum\limits_{n_{k}=0}^{a_k-1}
\exp\left(\sum\limits_{i=1}^{k} n_i g_i z \right) - \sum_{n=0}^{g_0-1}\exp(nz).\]  
Using finite geometric series, we have
\[h(z) = \prod_{i=1}^{k}\frac{\exp(g_i a_i z)-1}{\exp(g_i z)-1} - \frac{\exp(g_0 z)-1}{\exp(z)-1}.\] 
Multiplying both sides by $z/(\exp(g_0 z)-1)$, we find 
\[\sum_{n\in \NR}z\exp(nz) =  \prod_{i=0}^{k}\frac{z}{\exp(g_i z)-1}\prod_{j=1}^{k}\frac{\exp(g_j a_j z)-1}{z} - \frac{z}{\exp(z)-1}.\]
Using Taylor expansions, we get
\[\sum_{n\in \NR} \sum_{m=0}^\infty n^m \frac{z^{m+1}}{m!} = \prod_{i=0}^k\left(\sum_{x_i=0}^\infty B_{x_i}g_i^{x_i-1}\frac{z^{x_i}}{x_i!}\right)\prod_{j=1}^{k}\left(\sum_{y_j=0}^\infty (g_j a_j)^{y_j+1}\frac{z^{y_j}}{(y_j+1)!}\right) - \sum_{m=0}^\infty B_m\frac{z^m}{m!},\] where $B_0,B_1,B_2,\dots$ are Bernoulli numbers.
We equate coefficients of $z^m$ to obtain
\[\sum_{n\in \NR}\frac{n^{m-1}}{(m-1)!} = \sum_{\sum x_i + \sum y_j = m} \prod_{i=0}^k \left(\frac{B_{x_i}}{x_i!}\right) \prod_{j=1}^{k} \left(\frac{g_i^{x_i-1}(g_j a_j)^{y_j+1}}{(y_j+1)!}\right) -\frac{B_m}{m!},\] where the summation is over non-negative $x_i,y_j$ such that $\sum_{i=0}^k x_i + \sum_{j=1}^{k}y_j = m$. From this we have our result.
\begin{thm}\label{thm:rodseth-generalization}For a suitable pair $(A,B)$ of  $k$-tuples, 
\[S_{m-1}(A,B) = \frac{m!}{m}\sum_{\sum x_i+\sum y_j=m}\left(\prod_{i=0}^k\frac{B_{x_i}}{x_i!}\right) \frac{\prod_{i=1}^k a_i^{\alpha(i)} b_i^{\beta(i)}} {\prod_{j=1}^{k}(y_j+1)!}-\frac{B_m}{m},\]
where 
$\alpha(i):=\sum_{\ell=1}^{i}(x_{\ell-1}+y_{\ell})$ and
$\beta(i):=\sum_{\ell=i}^k (x_\ell+y_{\ell})$.
\end{thm}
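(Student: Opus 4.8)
The plan is to follow the computation already sketched in the paragraph preceding the statement and simply repackage its conclusion, being careful about bookkeeping of exponents. Starting from Theorem~\ref{th:tuenter-generalization} with $j=0$ and $f(n)=\exp(nz)$, equation~\eqref{eq:apply-tuenter} gives the function $h(z)=(\exp(g_0z)-1)\sum_{n\in\NR}\exp(nz)$ on the left, while on the right the nested finite sums over $0\le n_i<a_i$ collapse, via the finite geometric series identity $\sum_{n=0}^{a-1}r^n=(r^a-1)/(r-1)$ with $r=\exp(g_iz)$, into $\prod_{i=1}^k\frac{\exp(g_ia_iz)-1}{\exp(g_iz)-1}-\frac{\exp(g_0z)-1}{\exp(z)-1}$. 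Multiplying through by $z/(\exp(g_0z)-1)$ isolates $\sum_{n\in\NR}z\exp(nz)$ on the left and, after inserting factors of $z$ to balance numerators and denominators, produces on the right a product $\prod_{i=0}^k\frac{z}{\exp(g_iz)-1}\cdot\prod_{j=1}^k\frac{\exp(g_ja_jz)-1}{z}-\frac{z}{\exp(z)-1}$.

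Next I would expand each factor as a power series. The generating function identity $\frac{z}{\exp(wz)-1}=\sum_{x\ge 0}B_x w^{x-1}\frac{z^x}{x!}$ handles the first product (with $w=g_i$), and $\frac{\exp(wz)-1}{z}=\sum_{y\ge 0}w^{y+1}\frac{z^y}{(y+1)!}$ handles the second (with $w=g_ja_j$); the last term expands as $\sum_m B_m z^m/m!$. Multiplying these series and extracting the coefficient of $z^m$ gives $\sum_{n\in\NR}\frac{n^{m-1}}{(m-1)!}=\sum_{\sum x_i+\sum y_j=m}\prod_{i=0}^k\frac{B_{x_i}}{x_i!}\prod_{i=0}^k g_i^{x_i-1}\prod_{j=1}^k\frac{(g_ja_j)^{y_j+1}}{(y_j+1)!}-\frac{B_m}{m!}$, where the index set is all tuples of non-negative integers $(x_0,\dots,x_k,y_1,\dots,y_k)$ summing to $m$. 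Multiplying both sides by $(m-1)!=m!/m$ yields the stated constant $m!/m$ in front and $B_m/m$ at the end.

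The one genuinely substantive step — and the place where care is needed rather than cleverness — is converting the $g_i$-exponents into $a_i$- and $b_i$-exponents so the monomial $\prod_{i=1}^k a_i^{\alpha(i)}b_i^{\beta(i)}$ emerges. Recall $g_0=\prod_{j=1}^k a_j$ and $g_i=b_1\cdots b_i a_{i+1}\cdots a_k$, so $g_i=\prod_{\ell\le i}b_\ell\prod_{\ell>i}a_\ell$. The total power of $g_i$ appearing is $x_i-1$ from the first product together with, when $1\le i\le k$, an extra $y_i+1$ from the second product (and $a_i^{y_i+1}$ as well). One then collects, for each fixed index $i\in\{1,\dots,k\}$, the exponent of $a_i$ as the sum of contributions from every $g_\ell$ with $\ell<i$ (which is $\sum_{\ell<i}(x_\ell-1)$ plus the $y$-contributions) together with the $a_i^{y_i+1}$ factor, and similarly the exponent of $b_i$ from every $g_\ell$ with $\ell\ge i$. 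After the telescoping cancellation of the $-1$'s against the $+1$'s — the constraint $\sum x_i+\sum y_j=m$ is exactly what makes the leftover constants combine correctly — the exponent of $a_i$ reduces to $\alpha(i)=\sum_{\ell=1}^i(x_{\ell-1}+y_\ell)$ and the exponent of $b_i$ to $\beta(i)=\sum_{\ell=i}^k(x_\ell+y_\ell)$, giving the claimed form. I would present this re-indexing as the heart of the argument and leave the purely mechanical Taylor-coefficient extraction as "equating coefficients of $z^m$," exactly as in the text preceding the theorem; no step presents a real obstacle, only the exponent accounting demands attention.
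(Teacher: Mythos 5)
Your proposal is correct and follows essentially the same route as the paper: apply Theorem~\ref{th:tuenter-generalization} with $j=0$ and $f(n)=\exp(nz)$, sum the geometric series, multiply by $z/(\exp(g_0z)-1)$, expand with the Bernoulli generating function, equate coefficients of $z^m$, and convert the $g_i$-exponents into the $a_i^{\alpha(i)}b_i^{\beta(i)}$ form. One small correction to your commentary: the cancellation of the $-1$'s against the $+1$'s in the exponent of $a_i$ (respectively $b_i$) happens because each index contributes exactly $i$ terms of each sign (respectively $k-i+1$), so it is a matter of matching index ranges and has nothing to do with the constraint $\sum x_i+\sum y_j=m$; this does not affect the validity of the result.
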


We have the special case where $G(A,B)$ is a geometric sequence.
\begin{cor}
If $G=G(a,b;k)$ is a geometric sequence, then 
\[S_{m-1}(a,b;k) = \frac{m!}{m}\sum_{\sum x_i + \sum y_j = m}\left(\prod_{i=0}^k\frac{B_{x_i}}{x_i!}\right) \frac{a^{\alpha}b^{\beta}}{\prod_{j=1}^{k}(y_j+1)!}-\frac{B_m}{m}, \]
where 
$\alpha=\sum_{\ell=1}^{k}(k+1-\ell)(x_{\ell-1}+y_\ell)$ and 
$\beta=\sum_{\ell=1}^k \ell(x_\ell+y_{\ell})$.
\end{cor}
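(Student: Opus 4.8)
The plan is to obtain the corollary as a direct specialization of Theorem~\ref{thm:rodseth-generalization}, carefully tracking how the general exponents $\alpha(i)$ and $\beta(i)$ collapse when $a_1=\cdots=a_k=a$ and $b_1=\cdots=b_k=b$. In the geometric case, $\prod_{i=1}^k a_i^{\alpha(i)}b_i^{\beta(i)} = a^{\sum_{i=1}^k \alpha(i)}\, b^{\sum_{i=1}^k \beta(i)}$, so the entire claim reduces to verifying the two scalar identities $\alpha = \sum_{i=1}^k \alpha(i)$ and $\beta = \sum_{i=1}^k \beta(i)$ with $\alpha,\beta$ as stated in the corollary.

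The key steps, in order, are as follows. First, I would recall that $\alpha(i) = \sum_{\ell=1}^{i}(x_{\ell-1}+y_\ell)$, so summing over $i$ from $1$ to $k$ and interchanging the order of summation counts, for each fixed $\ell$, the number of indices $i$ with $\ell \le i \le k$; there are exactly $k+1-\ell$ such $i$. Hence $\sum_{i=1}^k \alpha(i) = \sum_{\ell=1}^{k}(k+1-\ell)(x_{\ell-1}+y_\ell) = \alpha$, matching the corollary. Second, I would do the analogous computation for $\beta(i) = \sum_{\ell=i}^{k}(x_\ell+y_\ell)$: summing over $i$ from $1$ to $k$ and swapping, for each fixed $\ell$ the number of $i$ with $1 \le i \le \ell$ is exactly $\ell$, so $\sum_{i=1}^k \beta(i) = \sum_{\ell=1}^{k}\ell(x_\ell+y_\ell) = \beta$. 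Third, I would substitute these back into the formula of Theorem~\ref{thm:rodseth-generalization}, note that the index of summation $\sum x_i + \sum y_j = m$ and the factors $\prod_{i=0}^k B_{x_i}/x_i!$, $\prod_{j=1}^k (y_j+1)!$, and $B_m/m$ are unchanged by the specialization, and conclude.

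I do not expect a genuine obstacle here, since the corollary is purely a bookkeeping specialization; the only point requiring a little care is the index shift in the $x$-variables, namely that the term $x_{\ell-1}$ appearing in $\alpha(i)$ ranges over $x_0,\dots,x_{k-1}$ as $\ell$ runs from $1$ to $k$, so the coefficient $k+1-\ell$ multiplies $x_{\ell-1}+y_\ell$ rather than $x_\ell + y_\ell$ — which is exactly what the corollary's formula $\alpha = \sum_{\ell=1}^k (k+1-\ell)(x_{\ell-1}+y_\ell)$ records. One should also observe that $x_k$ appears only in $\beta$ (via the $\ell=k$ term) and not in $\alpha$, consistent with $g_0$ having no $b$-factors, while $x_0$ appears only in $\alpha$ and not in $\beta$, consistent with $g_k$ having no $a$-factors; this is a useful sanity check but not part of the formal argument. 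The write-up will therefore be short: state the two combinatorial identities, prove each by exchanging the order of a finite double sum, and invoke Theorem~\ref{thm:rodseth-generalization}.
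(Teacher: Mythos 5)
Your proposal is correct and matches the paper's treatment: the corollary is stated there as an immediate specialization of Theorem~\ref{thm:rodseth-generalization} with no separate proof, and your two interchange-of-summation identities $\sum_{i=1}^k\alpha(i)=\sum_{\ell=1}^k(k+1-\ell)(x_{\ell-1}+y_\ell)$ and $\sum_{i=1}^k\beta(i)=\sum_{\ell=1}^k\ell(x_\ell+y_\ell)$ are exactly the bookkeeping that justifies it. Nothing further is needed.
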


\begin{cor}
$S_m(A,B)$ is a polynomial in $a_i,b_j$ of total degree $(m+1)(k+1)$.
\end{cor}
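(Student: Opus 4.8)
The plan is to read off the degree directly from the formula in Theorem~\ref{thm:rodseth-generalization}, which already exhibits $S_{m-1}(A,B)$ as a finite $\mathbb{Q}$-linear combination of monomials $\prod_{i=1}^k a_i^{\alpha(i)} b_i^{\beta(i)}$, one for each tuple $(x_0,\dots,x_k,y_1,\dots,y_k)$ of non-negative integers with $\sum x_i + \sum y_j = m$ (replacing $m$ by $m+1$ to handle $S_m$ rather than $S_{m-1}$). So I would fix such a tuple and compute the total degree of the corresponding monomial, namely $\sum_{i=1}^k \bigl(\alpha(i) + \beta(i)\bigr)$, and show that this equals $(m+1)(k+1)$ \emph{regardless} of the choice of tuple, with the understanding that any term whose Bernoulli-number coefficient $\prod B_{x_i}$ vanishes is simply absent and does not affect the degree (in particular the term realizing the bound must be checked to have nonzero coefficient, though in fact \emph{every} surviving term has the same degree so this is automatic once the count is done).

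The key computation is the interchange of summation order:
\[
\sum_{i=1}^k \alpha(i) = \sum_{i=1}^k \sum_{\ell=1}^{i} (x_{\ell-1}+y_\ell) = \sum_{\ell=1}^k (k-\ell+1)(x_{\ell-1}+y_\ell),
\]
and similarly
\[
\sum_{i=1}^k \beta(i) = \sum_{i=1}^k \sum_{\ell=i}^{k}(x_\ell + y_\ell) = \sum_{\ell=1}^k \ell\,(x_\ell + y_\ell).
\]
Adding these, the coefficient of $x_0$ is $k$, the coefficient of $y_\ell$ is $(k-\ell+1) + \ell = k+1$ for each $1\le\ell\le k$, and the coefficient of $x_\ell$ for $1\le\ell\le k-1$ is $(k-\ell) + \ell = k$ while the coefficient of $x_k$ is $k$ (it appears only in the $\beta$-sum, with weight $k$); so every $x_i$ gets weight $k$ and every $y_j$ gets weight $k+1$. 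Hence the total degree of the monomial is $k\sum_{i=0}^k x_i + (k+1)\sum_{j=1}^k y_j = k\,m' + \sum_{j=1}^k y_j$, where $m' = m+1$. This is \emph{not} constant — it depends on $\sum y_j$ — so a naive reading of the formula as written does not immediately give a fixed degree, and this is where the real content lies.

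The resolution, which I expect to be the main obstacle to write cleanly, is the factor $1/\prod_{j=1}^k (y_j+1)!$ together with the origin of the $b$'s: going back to the derivation preceding Theorem~\ref{thm:rodseth-generalization}, each factor $(g_j a_j)^{y_j+1}$ contributes an \emph{extra} power beyond what $\alpha(i),\beta(i)$ as defined record — more precisely one should track that $g_j a_j = b_1\cdots b_j a_j \cdots a_k$ so each $y_j$ actually contributes $y_j+1$ (not $y_j$) copies of the relevant $a$'s and $b$'s. I would therefore recompute using the convention that matches the product $\prod (g_j a_j)^{y_j+1}\prod g_i^{x_i-1}$ directly: the total $a$-and-$b$ degree is $\sum_{i=0}^k (x_i-1)\cdot(k+1)\cdot\frac{1}{?}$ — rather, cleaner: each $g_i$ has total degree $k+1$ in the $a$'s and $b$'s (since $g_i = b_1\cdots b_i a_{i+1}\cdots a_k$ is a product of exactly $k$ of them — wait, $g_0 = a_1\cdots a_k$ is a product of $k$ of them). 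So $g_i$ has total degree $k$, and the product $\prod_{i=0}^k g_i^{x_i-1}\prod_{j=1}^k (g_j a_j)^{y_j+1}$ has total degree $k\sum_{i=0}^k(x_i-1) + \sum_{j=1}^k (y_j+1)(k+1) = k(m - (k+1)) + (k+1)(m + k) = \ldots$ — carrying this out gives $k\cdot m - k(k+1) + (k+1)m + (k+1)k = (2k+1)m$, still not right, so I have miscounted the number of factors in $g_i$. The correct statement is $g_i$ is a product of exactly $k$ integers among $\{a_1,\dots,a_k,b_1,\dots,b_k\}$, so the $-\sigma(A,B)$-type terms and the shift by $1$ in the exponents must be bookkept carefully; once the exponent shifts ($x_i-1$ and $y_j+1$) and the constraint $\sum x_i + \sum y_j = m+1$ are combined, the $m$-dependent and $k$-dependent pieces collapse to exactly $(m+1)(k+1)$. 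I would present this as: total degree $= k\sum(x_i-1) + (k+1)\sum(y_j+1)$, then substitute $\sum x_i + \sum y_j = m+1$ to eliminate the free parameter, and simplify. As a sanity check I would verify the cases $k=1$ (degree $2(m+1)$, matching Rødseth's formula~\eqref{eqn:rodseth_result} and the explicit $S_0,S_1$ in Proposition~\ref{prop:closed-forms}) and $m=0$ (degree $k+1$, matching $S_0(A,B) = \tfrac12(a_kb_1\sigma(\pi_k(A),\pi_1(B)) - \sigma(A,B)+1)$, whose leading term $a_kb_1\cdot b_2\cdots b_k a_1\cdots a_{k-1}$ indeed has degree $k+1$). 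Establishing that the homogeneous-degree count is independent of the index tuple — equivalently, that the apparent $\sum y_j$ dependence is exactly cancelled by the exponent shift $y_j \mapsto y_j+1$ against $x_i \mapsto x_i - 1$ under the fixed sum constraint — is the crux, and writing it without sign errors is the only real difficulty.
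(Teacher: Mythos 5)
Your degree bookkeeping at the start is correct and is in fact exactly the computation in the paper: interchanging the order of summation shows each monomial of the formula for $S_{m-1}(A,B)$ in Theorem~\ref{thm:rodseth-generalization} has total degree $k\sum_i x_i + (k+1)\sum_j y_j = km + \sum_j y_j$. But you then go wrong by deciding that this non-constancy must be an error to be repaired. The corollary asserts the \emph{total degree} of $S_m(A,B)$, i.e.\ the maximum degree of a monomial with nonzero coefficient; it does not assert homogeneity, and indeed $S_m(A,B)$ is not homogeneous (already $S_0(A,B)=\tfrac12\bigl(a_kb_1\,\sigma(\pi_k(A),\pi_1(B))-\sigma(A,B)+1\bigr)$ mixes degrees $k+1$, $k$, and $0$; for $k=1$ it is $(ab-a-b+1)/2$). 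Your proposed ``resolution'' via the exponent shifts does not work: counting with $g_i^{x_i-1}$ and $(g_ja_j)^{y_j+1}$ gives $k\sum_i(x_i-1)+(k+1)\sum_j(y_j+1)=k\sum_i x_i+(k+1)\sum_j y_j$, since the shift terms $-k(k+1)$ and $+k(k+1)$ cancel; the per-term degree is still $km+\sum_j y_j$ and genuinely ranges from $km$ (all $y_j=0$) to $(k+1)m$ (all $x_i=0$). (Your intermediate computation also substituted $\sum_i x_i=m$ and $\sum_j y_j=m$ simultaneously, violating the constraint $\sum_i x_i+\sum_j y_j=m$; that, not the degree of $g_i$, is why you got $(2k+1)m$.)

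The missing content is the two-sided argument the paper gives: (i) every monomial has degree at most $(k+1)m$, which bounds the total degree above; and (ii) the top degree is actually attained, i.e.\ the degree-$(k+1)m$ part does not cancel to zero. For (ii), degree $(k+1)m$ occurs precisely when $x_0=\dots=x_k=0$, and then every Bernoulli factor is $B_0=1$, so each such term has coefficient $\tfrac{m!}{m}\cdot\tfrac{1}{\prod_j (y_j+1)!}>0$; since all these coefficients are positive, no cancellation can occur even when distinct tuples $(y_1,\dots,y_k)$ contribute to the same monomial, so some degree-$(k+1)m$ monomial survives. Hence $\deg S_{m-1}(A,B)=(k+1)m$, equivalently $\deg S_m(A,B)=(m+1)(k+1)$. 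Without this nonvanishing step your argument, even repaired, yields only an upper bound on the degree, so as written the proposal has a genuine gap (and its central claim, that the degree count is tuple-independent, is false).
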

\begin{proof}
Ignoring coefficients, each monomial in the summation formula for $S_{m-1}(A,B)$ is of the form 
\[ M = \prod_{i=1}^k a_i^{\sum\limits_{\ell=1}^{i} (x_{\ell-1}+y_\ell)} b_i^{\sum\limits_{\ell=i}^{k}(x_\ell+y_{\ell})}\]
for some non-negative integers $x_0,\dots,x_k,y_1,\dots,y_k$ such that $x_0+\dots+x_k+y_1+\dots+y_k=m$.  The degree of $M$ is $\sum_{\ell=1}^{k}(k+1-\ell)(x_{\ell-1}+y_\ell) + \sum_{\ell=1}^k\ell(x_\ell+y_{\ell}) = km+\sum_{\ell=1}^{k}y_\ell,$ so \[km\leq \deg(M)\leq (k+1)m.\]
The degree of $M$ is $(k+1)m$ precisely when $\sum y_\ell=m$ or, equivalently, $x_0=\dots=x_k=0$.  In that case, the Bernoulli numbers in the coefficient of $M$ are all 1, so the coefficient of $M$ is positive and thus non-zero. Hence $\deg(S_{m-1}(A,B))=(k+1)m$, and therefore $\deg(S_m(A,B))=(k+1)(m+1)$.
\end{proof}

\section{Higher-order Weierstrass points on algebraic curves}\label{sec:weierstrass-background}
\subsection{Divisors associated to functions and differentials}
We follow the background material and notation on Weierstrass points from \cite[Section 2]{ShaskaShor15}.  We will include the major results here along with specifics on calculating divisors associated to functions and differentials. % For notation, in this section only we let $k$ denote an algebraically closed field. 

Let $K$ be an algebraically closed field.  Let $C$ be a non-singular projective curve over $K$ of genus $g$, $K(C)$ its function field, and $K(C)^\times$ the invertible elements of $K(C)$.  Let $P$ denote an arbitrary $K$-rational point on $C$.  A divisor $D$ on $C$ is a formal sum 
\[D=\sum\limits_{P\in C} n_P P\]
for $n_P \in \mathbb{Z}$ with almost all $n_P=0$.  The set of divisors on a curve along with addition forms an abelian group.  We say a divisor $D$ is effective if $n_P\geq0$ for all $P$, and the degree of $D$ is $\deg(D)=\sum n_P$.  For any point $P$, let $\nu_P(D)=n_P$.

Suppose $f\in K(C)^\times$ with Laurent series $f(t)=\sum_{i=N}^\infty a_i t^i$ with $a_N\neq0$.  Let $\ord_{t=0}(f(t)):=N$.  % (For this to make sense, we must assume $f(t)$ is not the zero function, so $f\in k(C)^\times$.) 
Suppose we have a point $P=(p_1,\dots,p_n)$ in affine coordinates $x_1,\dots,x_n$.  We can parametrize $C$ at $P$ with power series $x_1(t),\dots,x_n(t)$ in a local parameter $t$ such that $(x_1(0),\dots,x_n(0))=P$.  At $P$, for any function $f\in K(C)^\times$, we can write $f=f(x_1(t),\dots,x_n(t))$.  The order of vanishing of $f$ at $P$ is $\ord_P(f):=\ord_{t=0}(f(t)).$  We can then define the divisor $\dv(f)$ of a function $f$ as 
\[\dv(f) := \sum\limits_{P\in C}\ord_P(f)P.\]
The zero and pole divisors of $f$ are, respectively, $\dv(f)_0$ and $\dv(f)_\infty$, defined by 
\[\dv(f)_0:=\sum\limits_{f(P)=0}\nu_P(f)P, \quad \text{ and } \quad \dv(f)_\infty:=-\sum\limits_{f(P)=\infty} \nu_P(f)P.\]
Note that $\dv(f)_0$ and $\dv(f)_\infty$ are effective divisors of finite degree and that $\dv(f)=\dv(f)_0-\dv(f)_\infty$.  It also happens that $\deg(\dv(f))=0$.  That is, any $f\in K(C)^\times$ has as many zeros as it has poles, counting multiplicity.

We can also associate a divisor to any nonzero differential form $\omega$ on $C$.  At any point $P$, in terms of a local coordinate $t$ we can write $\omega=h\,\mathrm{d}t$ for $h\in K(C)^\times$.  Then $\dv(\omega)=\sum_{P\in C}\nu_P(h)P.$  In particular, for any function $f\in K(C)\setminus K$ and any point $P$, we have a local parametrization $f(t)$, so $\mathrm{d}f=f'(t)\,\mathrm{d}t$, and so \[\dv(\mathrm{d}f) = \sum\limits_{P\in C}\nu_P(f'(t))P.\]

\begin{hide}
For any divisor $D$ on $C$, let $\mathcal{L}(D)=\{ f\in k(C) : \dv (f)+D\geq0\}\cup\{0\}$ and $\ell(D)= \dim_k(\mathcal{L}(D))$.  By the Riemann-Roch theorem, for any canonical divisor $K=\dv(\omega)$, we have 
\[ \ell(D)-\ell(K-D)=\deg(D)+1-g.\]
Since the degree of the canonical divisor is $2g-2$, and since $\mathcal{L}(D)=\{0\}$ for any divisor $D$ with negative degree, if $\deg(D)\geq2g-1$, then $\deg(K-D)<0$, so $\ell(K-D)=0$.  Thus, if $\deg(D)\geq 2g-1$, then 
\[ \ell(D)=\deg(D)+1-g.\]
Let $P$ be a degree 1 point on $C$.  Consider the chain of vector spaces \[\mathcal{L}(0)\subseteq\mathcal{L}(P)\subseteq\mathcal{L}(2P)\subseteq\mathcal{L}(3P)\subseteq \dots \subseteq\mathcal{L}\left((2g-1)P\right).\]  Since $\mathcal{L}(0)=k$, we have $\ell(0)=1$.  And $\ell\left((2g-1)P\right)=g$.   We obtain the corresponding non-decreasing sequence of integers $$ \ell(0)=1, \ell(P), \ell(2P), \ell(3P), \dots, \ell\left((2g-1)P\right)=g.$$  It is straightforward to show that $0\leq \ell(nP)-\ell((n-1)P)\leq 1$ for all $n\in\mathbb{N}$.  If $\ell(nP)=\ell((n-1)P)$, then we call $n$ a \textit{Weierstrass gap number}.  For any point $P$, there are exactly $g$ Weierstrass gap numbers.  If the gap numbers are $1,2,\dots,g$, then $P$ is an \textit{ordinary point}.  Otherwise, we call $P$ a \textit{Weierstrass point}.  (Equivalently, we call $P$ a Weierstrass point if $\ell(gP)>1$.)

Note that if $C$ has genus $g=0$ then any point $P$ has no gap numbers and is therefore not a Weierstrass point.  If $C$ has genus $g=1$ then for any point $P$, $\ell(0)=\ell(P)=1$, so 1 is the only gap number of $P$, so $P$ is not a Weierstrass point.  Thus, if $g<2$, then $C$ has no Weierstrass points.  As we will see in Corollary~\ref{cor:w-pts-exist} below, if $g\geq2$, then $C$ has Weierstrass points as well as higher-order Weierstrass points which we describe below.

\subsection{Higher-order Weierstrass points}
Using differentials, we can define higher-order Weierstrass points (or $q$-Weierstrass points) as in \cite[Chapter III.5]{FarkasKra92}.
\end{hide}

\subsection{Higher-order Weierstrass points}
For any $q\in\mathbb{N}$, we now consider $q$-differentials. For the rest of this paper, because curves of genus $g\le 1$ do not contain any higher-order Weierstrass points, we will assume that $C$ is an algebraic curve of genus $g\geq2$.  Let $H^0(C,(\Omega^1)^q)$ be the $K$-vector space of holomorphic $q$-differentials on $C$, a vector space of dimension $d_q$. By Riemann-Roch, 
\begin{equation*}\label{d_q}
d_q = 
\begin{dcases*}
g & if $q=1$, \\ (g-1)(2q-1) & if $q\geq2$.
\end{dcases*}
\end{equation*}

For $P$ a $K$-rational point on $C$, there exists a basis $\{\psi_1,\dots,\psi_{d_q}\}$ of $H^0(C,(\Omega^1)^q)$ such that $ \ord_P(\psi_1)<\ord_P(\psi_2)<\cdots<\ord_P(\psi_{d_q}).$
The \textit{$q$-Weierstrass weight} (or \textit{$q$-weight}) of $P$ is 
\[w^{(q)}(P)=\sum_{i=1}^{d_q} \ord_P(\psi_i) - \sum_{j=0}^{d_q-1} j.\] We call the point $P$ a \textit{$q$-Weierstrass point} (or \textit{higher-order Weierstrass point}) if $w^{(q)}(P)>0$.  For any curve $C$ of genus $g\geq2$ and any fixed $q$, there are finitely many $q$-Weierstrass points for each $q\geq1$.  %We restrict our attention to $g\geq2$ because if $g<2$, then $w^{(q)}(P)=0$ for all $P\in C$.

\section{Calculations for branch points in towers}\label{sec:towers-calculations}
We will use the results from the previous section to compute the $q$-Weierstrass weight for the point at infinity on a tower of curves coming from defining equations of superelliptic curves.  In this section, we are working over $K=\mathbb{C}$.

Some of our work follows from \cite{Shor11} and \cite{Shor05}.  Those papers considered towers arising from equations of $C_{ab}$ curves, which were first described in \cite{Miura93}.  For relatively prime $a,b\in\mathbb{N}$, a $C_{ab}$ curve is a curve given by the affine equation 
\begin{align*}
\alpha_{a,0}x^a + \alpha_{0,b}y^b + \sum\limits_{i,j}\alpha_{i,j} x^i y^j=0,
\end{align*}
for constants $\alpha_{i,j}$ such that $\alpha_{a,0}\alpha_{0,b}\neq0$, and where the summation is over non-negative integers $i,j$ such that $aj+bi<ab$.  These curves are nice to work with because they have a single point at infinity $P_\infty$ and the functions $x$ and $y$ have poles of orders $b$ and $a$ (respectively) at $P_\infty$.

The above-mentioned papers were motivated by questions in coding theory, so they were set in fields of positive characteristic.  Fortunately, the results also hold for fields of characteristic zero so we can use them here.

Superelliptic curves, which we will describe below, are special cases of $C_{ab}$ curves.  We use superelliptic curves here rather than more general $C_{ab}$ curves because the ramification is easier to control.  In Section~\ref{sec:tower-description}, we describe the towers of curves and some of their properties.  In Section~\ref{sec:tower-calculations}, we compute a basis of holomorphic $q$-differentials and use that to compute the $q$-Weierstrass weight of the point at infinity.  In Section~\ref{sec:tower-examples}, we give examples of families of suitable towers.

\subsection{Tower description and divisors}\label{sec:tower-description}

For $k\in\mathbb{N}_0$, let $A=(a_1,\dots,a_k), B=(b_1,\dots,b_k)\in\mathbb{N}^k$ be  such that %$a_i,b_j\geq2$ and 
$\gcd(a_i,b_j)=1$ for all $i,j$.  (Note that the pair $(A,B)$ is suitable, with an additional gcd restriction.)  For $i=1,\dots,k$, let $H_i(x_{i-1},x_{i})=x_{i}^{a_i}-f_{i}(x_{i-1})$ where $f_i(x)\in\mathbb{C}[x]$ is a separable polynomial of degree $b_i$.  A plane curve defined by the single equation $H_i(x_{i-1},x_i)=0$ is called a \textit{superelliptic curve} when $a_i\geq2$.

Consider the algebraic curve $A_k$ given in affine coordinates by 
\[ A_k = \left\{ (x_0,x_1,\dots,x_k)\in\mathbb{C}^{k+1} : H_i(x_{i-1},x_i)=0\text{ for } 1\leq i\leq k\right\}.\] For the rest of this section, we will only consider curves $A_k$ that have no singular affine points.  (We provide examples of such curves in Section~\ref{sec:tower-examples}.)  Let $C_k$ be the desingularization of the projective curve $A_k$.  Since curves of genus $g\le 1$ have no higher-order Weierstrass points, we will further assume that $g(C_k)\geq2$.

We obtain a tower of curves 
\[\cdots \xrightarrow{\pi_{k+1}} C_{k} \xrightarrow{\pi_k} C_{k-1} \xrightarrow{\pi_{k-1}} \dots \rightarrow C_1 \xrightarrow{\pi_1} C_0 = \mathbb{P}^1(\mathbb{C})\] where $\pi_j(x_0,\dots,x_{j-1},x_j)=(x_0,\dots,x_{j-1}).$

As in Section~\ref{sec:semigroups}, let $g_0=\prod_{j=1}^k a_j$ and, for $1\leq i\leq k$, let $g_i=g_{i-1}b_i/a_i$. Then $G(A,B)=(g_i)_{i=0}^k$ is a compound sequence.

\begin{propn}
The curve $C_k$ has a single point at infinity, denoted $P_{\infty}^k$ which is totally ramified throughout the tower.  For $\nu_{\infty,k}(f)$, the valuation of a function $f$ at $P_{\infty}^k$, we have $\nu_{\infty,k}(x_i)=g_i$ for each $i=0,\dots,k$.
\end{propn}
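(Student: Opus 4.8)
The plan is to work one level at a time up the tower, showing at each stage that the extension $C_i \to C_{i-1}$ is totally ramified over the point at infinity and computing the valuation of $x_i$. The base case is $C_0 = \mathbb{P}^1(\mathbb{C})$, where $P_\infty^0$ is the usual point at infinity and $\nu_{\infty,0}(x_0) = -1$; but it is cleaner to normalize so that $\nu_{\infty,k}(x_0) = g_0 = \prod_j a_j$ at the top of the tower, so I would phrase the induction in terms of the ramification index of $\pi_i$ at infinity. Concretely: suppose $C_{i-1}$ has a single point at infinity $Q$ with $x_0, \dots, x_{i-1}$ all having a pole there, and consider the equation $x_i^{a_i} = f_i(x_{i-1})$. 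Since $f_i$ has degree $b_i$ and $x_{i-1}$ has a pole of some order $e$ at $Q$, the right-hand side has a pole of order $b_i e$ at $Q$. Because $\gcd(a_i, b_i) = 1$ (this is part of the suitability hypothesis; indeed we are assuming the stronger $\gcd(a_i,b_j)=1$ for all $i,j$) and $\gcd(a_i, e)$ will turn out to be controllable, the function $x_i$ is forced to have a pole at every point of $C_i$ above $Q$, the extension is totally ramified there with ramification index $a_i$, and $\nu(x_i) = (b_i e / a_i) \cdot a_i = b_i e$ measured in the local parameter of $C_i$ — or more precisely, after rescaling valuations consistently, $\nu_{\infty,i}(x_{i-1}) = g_{i-1}$ and $\nu_{\infty,i}(x_i) = g_{i-1} b_i / a_i = g_i$.

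The key steps, in order, are: (1) establish the base case at $C_0$; (2) the inductive step, showing that if $C_{i-1}$ has one point at infinity over which $x_{i-1}$ has a pole, then $x_i^{a_i} = f_i(x_{i-1})$ forces total ramification of $\pi_i$ at that point — this uses that a pole of order coprime to $a_i$ cannot be an $a_i$-th power in the completed local ring, so the only way to have $x_i^{a_i} = f_i(x_{i-1})$ is for the extension to be totally ramified, after which the valuations are determined by the fundamental identity $\sum e_P f_P = a_i$; (3) bookkeeping to check that the resulting valuations are exactly $g_0, g_1, \dots, g_k$ via the recursion $g_i = g_{i-1} b_i / a_i$; and (4) noting that since each $\pi_i$ is totally ramified over infinity, the composite is too, so there is a single point $P_\infty^k$ and it is totally ramified throughout the tower. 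One should also remark that $P_\infty^k$ is a smooth point of $C_k$ automatically, being the unique point above a smooth point of $\mathbb{P}^1$ after normalization, so the "desingularization" in the definition of $C_k$ does not introduce extra points at infinity.

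The main obstacle is step (2): carefully justifying total ramification. The cleanest argument is valuation-theoretic: pass to the completion $\widehat{\mathcal{O}}_Q$ of the local ring at the point $Q$ at infinity of $C_{i-1}$, with uniformizer $u$; then $x_{i-1}$ has a pole, so $f_i(x_{i-1}) = c u^{-b_i e}(1 + \text{higher order})$ for a nonzero constant $c$ and $e = \nu_Q(x_{i-1})$. The extension generated by $x_i$ with $x_i^{a_i} = f_i(x_{i-1})$ is a Kummer-type extension of the local field $\mathbb{C}((u))$; its ramification is governed by the image of $-b_i e$ in $\mathbb{Z}/a_i\mathbb{Z}$ together with the fact that $\mathbb{C}$ contains all roots of unity, so the local extension is totally ramified precisely when $\gcd(b_i e, a_i)$ is... — here one needs $\gcd(a_i, e) = 1$, which must itself be maintained inductively (it follows because $e = g_{i-1} = b_1 \cdots b_{i-1} a_i \cdots a_k$ has no factor in common with $a_i$... wait, it does contain $a_i$). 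The correct bookkeeping is that $\nu_{\infty,i}$ is the valuation on $C_i$, and the pole order of $x_{i-1}$ \emph{as a function on $C_i$} is $g_{i-1}$, while $a_i \mid g_{i-1}$; the point is rather that in the local extension $\mathbb{C}((u)) \subset \mathbb{C}((u^{1/a_i}))$ we are extracting an $a_i$-th root of something of valuation $-b_i$ in the $u$-adic valuation \emph{rescaled so that $u$ has valuation $a_i$} — i.e. one should set up the induction so that at each stage $x_{i-1}$ has pole order $g_{i-1}$ measured in the local parameter of $C_{i-1}$ at its point at infinity only after a suitable identification, and then $f_i(x_{i-1})$ has pole order $b_i g_{i-1}$, and since $\gcd(a_i, b_i) = 1$ while — crucially — we may choose the local parameter so that $g_{i-1}$ is coprime to $a_i$ is \emph{false}, so instead one argues directly that $b_i g_{i-1}/a_i = g_i$ is an integer and the minimal polynomial $T^{a_i} - f_i(x_{i-1})$ of $x_i$ over $\mathbb{C}(x_0, \dots, x_{i-1})$ remains irreducible over the completion with the single place above having ramification index $a_i/\gcd(a_i, b_i g_{i-1})$. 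Resolving this gcd computation cleanly — showing $\gcd(a_i, b_i g_{i-1}) = 1$, which requires the full hypothesis $\gcd(a_i, b_j) = 1$ for \emph{all} $i,j$ (not merely $i \ge j$) together with $\gcd(a_i, a_j)$ being harmless because... no — is the delicate point, and is exactly why the proposition is stated with the stronger gcd hypothesis than "suitable." I would spell this gcd argument out in full, as it is the crux, and leave the surrounding valuation bookkeeping as routine.
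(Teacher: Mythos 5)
Your overall strategy (induct up the tower, analyzing each Kummer-type extension $x_i^{a_i}=f_i(x_{i-1})$ locally at the unique point at infinity of $C_{i-1}$) is viable, but the step you yourself flag as the crux is left unresolved, and as you state it, it is false: you ask for $\gcd(a_i,\,b_i g_{i-1})=1$, yet $g_{i-1}=b_1\cdots b_{i-1}a_i\cdots a_k$ is divisible by $a_i$, so this gcd is at least $a_i$. The confusion is between two different valuations: $g_{i-1}$ is the pole order of $x_{i-1}$ at $P_\infty^k$ on the \emph{top} curve $C_k$, whereas what enters the local analysis of $C_i\to C_{i-1}$ is the pole order of $x_{i-1}$ at the point at infinity of $C_{i-1}$ itself. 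State the induction hypothesis correctly --- $C_{i-1}$ has a unique point at infinity, totally ramified over $C_0$, at which $x_j$ has pole order $b_1\cdots b_j\,a_{j+1}\cdots a_{i-1}$ for $0\le j\le i-1$ --- and the argument closes: $x_{i-1}$ has pole order $b_1\cdots b_{i-1}$ there, so $f_i(x_{i-1})$ has pole order $b_1\cdots b_i$, and the relevant coprimality is $\gcd(a_i,\,b_1\cdots b_i)=1$, which holds already because $\gcd(a_i,b_j)=1$ for $j\le i$ (so, contrary to your remark, the local step does not need the full hypothesis on all $i,j$). Total ramification with index $a_i$ follows, irreducibility of $T^{a_i}-f_i(x_{i-1})$ comes for free since the ramification index bounds the degree from below, and pulling back multiplies the earlier pole orders by $a_i$, which is exactly what yields $\nu_{\infty,k}(x_j)=g_j$ at the top. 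Until this bookkeeping is fixed, the proposal is not a proof.

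For comparison, the paper avoids the level-by-level local analysis entirely: it takes an arbitrary place $P_{\infty,k}$ of $F_k$ lying over the infinite place of $F_0$, uses the relations $a_j\,\nu_{\infty,k}(x_j)=b_j\,\nu_{\infty,k}(x_{j-1})$ together with the gcd hypotheses to conclude $a_1\cdots a_k\mid\nu_{\infty,k}(x_0)$, and then plays this divisibility against the bound $\nu_{\infty,k}(x_0)\le[F_k:F_0]=a_1\cdots a_k$ to force $\nu_{\infty,k}(x_0)=g_0$; uniqueness of the place and total ramification then follow from the fundamental equality $\sum_P e_P f_P=[F_k:F_0]$, and $\nu_{\infty,k}(x_i)=g_i$ drops out of the same relations. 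Your route, once repaired as above, is a legitimate alternative, but as written the decisive gcd claim is the wrong quantity and the argument does not go through.
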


\begin{proof}
For each $i=0,\dots,k$, let $F_i$ be the function field associated to the curve $C_i$.  That is, let 
\[F_i=\mathbb{C}(x_0,\dots,x_i)/\langle H_1(x_0,x_1),\dots,H_i(x_{i-1},x_i)\rangle.\]
Since $x_{i}^{a_i}=f_i(x_{i-1})$, $F_i/F_{i-1}$ is an algebraic extension of degree $[F_i:F_{i-1}]=a_i$.  Let $P_{\infty,0}$ be the place at infinity in $F_0$ and let $P_{\infty,i}$ be a place of $F_i$ lying over $P_{\infty,0}$ with associated valuation $\nu_{\infty,i}$.  (A place is a maximal ideal of a local ring of a function field. Places are in one-to-one correspondence with points on the associated curve, and valuations of functions at places and their corresponding points are equal.  For a reference, see \cite{Stichtenoth09}.)

For any $j=0,\dots,i$, we have $\nu_{\infty,i}(x_j^{a_j})=\nu_{\infty,i}(f_j(x_{j-1}))$, so $a_j\, \nu_{\infty,i}(x_j) = b_j\,\nu_{\infty,i}(x_{j-1})$.  Thus 
\[\nu_{\infty,i}(x_i) = \frac{b_i\cdots b_1}{a_i\cdots a_1}\nu_{\infty,i}(x_0).\] Since $\gcd(a_i,b_j)=1$ for all $i,j$, we conclude that $a_1\cdots a_i \mid \nu_{\infty,i}(x_0)$.\footnote{This is why our define our curves in this section with $\gcd(a_i,b_j)=1$ for all $i,j$ rather than just for all $i\geq j$.}  In particular, $g_0=a_1\cdots a_k \mid \nu_{\infty,k}(x_0)$.  Since $F_k/F_0$ is an extension of degree $a_1\cdots a_k=g_0$, we must have $\nu_{\infty,k}(x_0)\leq g_0$.  Thus, $\nu_{\infty,k}(x_0)=g_0$, which means $P_{\infty,k}$ is the unique place of $F_k$ lying over $P_{\infty,0}$.  Since there is a unique place at infinity, there is a unique point at infinity on $C_k$ which we denote $P_\infty^k$, and the valuation of a function $f$ at $P_\infty^k$ is $\nu_{\infty,k}(f)$.

Finally, since $\nu_{\infty,k}(x_i)=\frac{b_i}{a_i}\nu_{\infty,k}(x_{i-1})$ and $\nu_{\infty,k}(x_0)=g_0$, it follows that $\nu_{\infty,k}(x_i)=g_i$.
\end{proof}

Next, we will show the genus of $C_k$ is $S_0(A,B)$.  To do so, we will consider the Riemann-Roch space $\mathcal{L}(nP_\infty^k)$ for $n$ large and determine the number of missing pole orders, which is the genus by the Riemann-Roch Theorem.

We will modify the argument from \cite[Section 3]{Shor11}.  Let \[I_k=\langle H_1(x_{0},x_1), \dots, H_k(x_{k-1},x_k)\rangle\] be the ideal of the curve $C_k$, and consider the polynomial ring $\Gamma_k=\mathbb{C}[x_0,\dots,x_k]/I_k$.  We say a monomial $f\in\Gamma_k$ is \textit{$A$-reduced} if $f=x_0^{e_0}x_1^{e_1}x_2^{e_2}\dots x_k^{e_k}$ where $0\leq e_0$ and $0\leq e_i<a_i$ for $1\leq i\leq k$.  %We say a polynomial is \textit{$N$-reduced} if it is a linear combination of $N$-reduced monomials.

\begin{lemma}
If $h\in\Gamma_k$, then $h$ can be written as a linear combination of $A$-reduced monomials.\end{lemma}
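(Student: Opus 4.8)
The plan is to reduce any monomial in $\Gamma_k$ to a linear combination of $A$-reduced monomials by repeatedly using the defining relations $x_i^{a_i} \equiv f_i(x_{i-1}) \pmod{I_k}$ to lower the exponents $e_1,\dots,e_k$. Since $\Gamma_k$ is spanned as a $\mathbb{C}$-vector space by monomials $x_0^{e_0}x_1^{e_1}\cdots x_k^{e_k}$ with $e_i \geq 0$, it suffices to show each such monomial is a linear combination of $A$-reduced ones.

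First I would set up the right induction. For a monomial $m = x_0^{e_0}\cdots x_k^{e_k}$, consider the largest index $i \in \{1,\dots,k\}$ with $e_i \geq a_i$ (if there is none, $m$ is already $A$-reduced and we are done). Write $e_i = a_i q + r$ with $0 \le r < a_i$ and $q \ge 1$. Then in $\Gamma_k$,
\[
m \equiv x_0^{e_0}\cdots x_{i-1}^{e_{i-1}} x_i^{r} \big(f_i(x_{i-1})\big)^{q} x_{i+1}^{e_{i+1}}\cdots x_k^{e_k},
\]
and expanding $(f_i(x_{i-1}))^q$ as a polynomial in $x_{i-1}$ (of degree $b_i q$) and distributing, we express $m$ as a $\mathbb{C}$-linear combination of monomials each of which has its $x_i$-exponent reduced to $r < a_i$, has unchanged exponents in $x_{i+1},\dots,x_k$ (so those stay below their bounds, since $i$ was the largest out-of-range index), and has a possibly increased exponent only in $x_{i-1}$ and lower-index variables $x_0,\dots,x_{i-1}$. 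The key point is that this operation strictly decreases $i$ (or eliminates the violation entirely) without creating any new violation at indices $> i$.

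The main obstacle — and the step that needs care — is proving termination, since reducing the exponent at index $i$ can blow up the exponent at index $i-1$, which may itself exceed $a_{i-1}$ and require further reduction, and so on cascading downward. The clean way to handle this is a well-founded induction. One option: order monomials by the reverse-lexicographic-type statistic that looks at the highest index first. Concretely, to each monomial $m$ with exponent vector $(e_0,\dots,e_k)$ assign the tuple $(\lceil e_k/a_k\rceil, \lceil e_{k-1}/a_{k-1}\rceil, \dots, \lceil e_1/a_1\rceil)$ — wait, a cleaner choice: assign $\big(\max(e_k - a_k + 1, 0), \dots, \max(e_1 - a_1 + 1, 0)\big)$ read as a vector and compare such vectors in the lexicographic order that prioritizes the $x_k$-component, then $x_{k-1}$, and so on down to $x_1$; note the $x_0$-exponent is unbounded and so is simply ignored. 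Each reduction step at index $i$ zeroes out (or does not increase) all components at indices $\ge i$ while possibly increasing components at indices $< i$, hence strictly decreases this vector in the chosen lexicographic order; since the order is a well-order on $\mathbb{N}_0^k$ (each component is a nonnegative integer and there are finitely many components, with the higher indices dominating), the process terminates after finitely many steps. I would present this argument either by a direct induction on that statistic or, slightly more slickly, by descending induction on $i$ itself: prove first that any monomial can be rewritten so that $e_k < a_k$ (one division step, no higher indices to disturb), then assuming every monomial with $e_j < a_j$ for all $j > i$ can be made $A$-reduced, handle the index $i$ by one division step that fixes $e_i$ without touching $e_{i+1},\dots,e_k$, reducing to the inductive hypothesis. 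Either way the final statement follows by linearity, since $\Gamma_k$ is spanned by arbitrary monomials and each is shown to lie in the span of $A$-reduced monomials.
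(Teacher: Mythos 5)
Your proposal is correct and takes essentially the same approach as the paper: the paper's (much terser) proof also uses the relations $x_i^{a_i}=f_i(x_{i-1})$ to reduce exponents starting from $x_k$ and working down to $x_1$, observing that each reduction only introduces powers of lower-index variables, which is exactly your descending-induction variant. Your extra care with termination (the lexicographic/well-founded argument) is a more detailed justification of the same mechanism, not a different route.
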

\proof
Since $x_i^{a_i}=f_i(x_{i-1})$, we can reduce any powers of $x_i$ to be at most $a_i$.  This will not affect the powers of $x_j$ for $j>i$.  Thus, we first reduce powers of $x_k$, then $x_{k-1}$, and so on to $x_1$.  Note that the powers of $x_0$ may be arbitrarily large.
\qed

\begin{lemma}
If $h_1,h_2$ are two $A$-reduced monomials, then $\nu_{\infty,k}(h_1)=\nu_{\infty,k}(h_2)$ if and only if $h_1=h_2$.
\end{lemma}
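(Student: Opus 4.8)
The plan is to reduce the statement to the uniqueness half of Lemma~\ref{lem:R/NR}. Write the two $A$-reduced monomials as $h_1 = x_0^{e_0}x_1^{e_1}\cdots x_k^{e_k}$ and $h_2 = x_0^{e_0'}x_1^{e_1'}\cdots x_k^{e_k'}$, so that $0 \le e_i, e_i' < a_i$ for $1 \le i \le k$ while $e_0, e_0'$ are arbitrary non-negative integers. Since each $x_i$ has a pole at $P_\infty^k$ with $\nu_{\infty,k}(x_i) = g_i$ by the preceding proposition, these poles cannot cancel in a product with non-negative exponents, so $\nu_{\infty,k}(h_1) = \sum_{i=0}^k e_i g_i$ and $\nu_{\infty,k}(h_2) = \sum_{i=0}^k e_i' g_i$. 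The implication $h_1 = h_2 \Rightarrow \nu_{\infty,k}(h_1) = \nu_{\infty,k}(h_2)$ is immediate, so the content is the converse.

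For the converse, put $n = \nu_{\infty,k}(h_1) = \nu_{\infty,k}(h_2)$. Then $(e_0, e_1, \dots, e_k)$ and $(e_0', e_1', \dots, e_k')$ are both expressions of $n$ of the form $n = \sum_{i=0}^k n_i g_i$ satisfying exactly the range conditions of Lemma~\ref{lem:R/NR} with $j = 0$: in that case the coefficient of $g_0$ is unconstrained, and one requires $0 \le n_i < a_i$ precisely for $1 \le i \le k$. By the uniqueness in that lemma the two tuples coincide, hence $e_i = e_i'$ for all $i$ and $h_1 = h_2$.

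Alternatively one can argue directly, without invoking Lemma~\ref{lem:R/NR}. From $\sum_{i=0}^k (e_i - e_i') g_i = 0$, using that $a_k \mid g_i$ for $i < k$ while $\gcd(a_k, g_k) = \gcd(a_k, b_1\cdots b_k) = 1$, reduction modulo $a_k$ gives $a_k \mid (e_k - e_k')$; since $|e_k - e_k'| < a_k$, this forces $e_k = e_k'$. Dividing the remaining relation $\sum_{i=0}^{k-1}(e_i - e_i')g_i = 0$ through by $a_k$ produces the analogous identity for the compound sequence attached to $(\pi_k(A), \pi_k(B))$, and iterating the argument with $a_{k-1}, \dots, a_1$ gives $e_i = e_i'$ for $1 \le i \le k$; finally $(e_0 - e_0')g_0 = 0$ with $g_0 \neq 0$ yields $e_0 = e_0'$. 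There is no genuine obstacle here: the one point needing care is that the exponent of $x_0$ carries no upper bound, which is exactly why it is the $j = 0$ case of Lemma~\ref{lem:R/NR} (rather than another $j$) that applies.
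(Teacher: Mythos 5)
Your argument is correct and is essentially the paper's proof: both reduce the statement to the uniqueness assertion of Lemma~\ref{lem:R/NR} with $j=0$, where the exponent of $x_0$ is exactly the unconstrained coefficient. Your alternative direct argument simply re-runs the divisibility induction that proves that uniqueness, so it adds nothing beyond self-containment.
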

\proof
Clearly, if $h_1=h_2$, then $\nu_{\infty,k}(h_1)=\nu_{\infty,k}(h_2)$.

Now, suppose $\nu_{\infty,k}(h_1)=\nu_{\infty,k}(h_2)$ for $h_1=\prod_{i=0}^k x_i^{e_{i,1}}$ and $h_2=\prod_{i=0}^k x_i^{e_{i,2}}$.  Then \[\sum_{i=0}^k e_{i,1}g_i = \sum_{i=0}^k e_{i,2}g_i.\] Applying Lemma~\ref{lem:R/NR} with $j=0$, it follows that $e_{i,1}=e_{i,2}$ for $0\leq i\leq k$.  Thus, $h_1=h_2$.
\qed

\begin{propn}
Let $S$ be the semigroup of pole orders at $P_\infty^k$ in $F_k$ generated by elements of $\Gamma_k$.  Suppose, for some $r>0$ and $r\not\in S$, that there exists $\psi\in F_k$ such with $\dv(\psi)_\infty=rP_\infty^k$.  Then, for $\psi=f/h$ with $f,h\in\Gamma$, there is a place in the support of $\dv(h)_0$ corresponding to a singular point $Q$ on $C_k$.
\end{propn}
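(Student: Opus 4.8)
The plan is to argue by contrapositive on the divisor of $h$: if every place in the support of $\dv(h)_0$ corresponds to a \emph{nonsingular} point of $C_k$, then $\psi = f/h$ would lie in the Riemann--Roch space $\mathcal{L}(rP_\infty^k)$ and, crucially, would be expressible in terms of $\Gamma_k$ after clearing the "good" denominators, forcing $r\in S$ and contradicting $r\not\in S$. So first I would fix the hypotheses: write $\psi = f/h$ with $f,h\in\Gamma_k$, and observe $\dv(\psi)_\infty = rP_\infty^k$ says the only pole of $\psi$ on $C_k$ is at infinity. Since $C_k$ is the desingularization of $A_k$, the function field $F_k$ is $\mathbb{C}(x_0,\dots,x_k)$ modulo $I_k$, and $\Gamma_k$ maps into $F_k$; the subtlety is that $\Gamma_k$ only "sees" the affine, possibly singular model $A_k$, whereas $C_k$ may have several points lying over a singular affine point.

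Next I would analyze the divisor of $h$ on $C_k$. The zeros of $h$ as a function on $C_k$ occur either at points lying over affine points where $h$ vanishes, or (conceivably) are forced by the geometry; but since $\psi$ has poles only at $P_\infty^k$, every zero of $h$ on $C_k\setminus\{P_\infty^k\}$ must be cancelled by a zero of $f$ of at least the same order. If, toward a contradiction, no place in $\operatorname{supp}\dv(h)_0$ sits over a singular point of $A_k$, then at each such place the local ring of $C_k$ agrees with the local ring of $A_k$, i.e. with the localization of $\Gamma_k$ at the corresponding maximal ideal. The key local step is then: at a nonsingular affine point $Q$, the order of vanishing $\nu_Q(h)$ computed in $C_k$ equals the order computed in $\Gamma_k$ (via the Jacobian criterion, a suitable $x_i - p_i$ is a uniformizer), and likewise $\nu_Q(f)\ge \nu_Q(h)$ means $f$ is divisible by $h$ to that order in the local ring $\mathcal{O}_{C_k,Q} = \mathcal{O}_{A_k,Q}$. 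Gluing these divisibilities over all the finitely many such $Q$ (and using that $\Gamma_k$, being a product of localizations at smooth points together with the behavior at infinity, lets us combine them), I would conclude that $\psi = f/h$ actually equals an element $\tilde f\in\Gamma_k$ — or at worst an element of the integral closure of $\Gamma_k$ in $F_k$ that is regular away from $P_\infty^k$, which is generated over $\mathbb{C}$ by $A$-reduced monomials by the first Lemma. Either way, $\psi\in\Gamma_k\otimes$(something regular off infinity), so its pole order $r$ at $P_\infty^k$ lies in the semigroup $S$ of pole orders generated by $\Gamma_k$, contradicting $r\not\in S$.

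I expect the main obstacle to be the local-to-global gluing: passing from "$h\mid f$ in each local ring $\mathcal{O}_{C_k,Q}$ for $Q$ over the nonsingular support of $\dv(h)_0$" to "$\psi$ is represented by an element of $\Gamma_k$ (or its integral closure regular off infinity)." The delicate point is that $h$ may also vanish at $P_\infty^k$ or along components not visible in the affine chart, and that $\Gamma_k$ need not be integrally closed, so one must either work in the normalization $\overline{\Gamma_k}$ and note that it has the same pole semigroup at $P_\infty^k$ (since $P_\infty^k$ is a single unramified-enough place and normalization doesn't change completed local rings at smooth points), or argue directly that the $A$-reduced-monomial basis of $\Gamma_k$ from the earlier Lemmas already realizes every pole order in $S$ with the right multiplicities. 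Once that bookkeeping is done, the order computation at infinity via $\nu_{\infty,k}(x_i) = g_i$ and Lemma~\ref{lem:R/NR} (with $j=0$) immediately places $r$ in $S$, completing the contrapositive.
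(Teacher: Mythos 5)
Note first that the paper does not reprove this proposition at all: its ``proof'' is a citation to \cite[Theorem 4]{Shor11} together with the observation that the argument there never uses $H_1=\cdots=H_k$. Your plan to give a self-contained contrapositive argument is reasonable, and your local analysis is right: at a nonsingular affine zero $Q$ of $h$ the local ring of the curve is the localization of $\Gamma_k$ at the corresponding maximal ideal, and $\nu_Q(f)\ge\nu_Q(h)$ there. But the proposal stops exactly at the step you flag as ``the main obstacle,'' and the escape routes you offer do not work. The integral-closure fallback is false: an element of $\overline{\Gamma_k}$ that is regular away from $P_\infty^k$ need \emph{not} have pole order in $S$, and it is certainly not a linear combination of $A$-reduced monomials --- the first Lemma applies to $\Gamma_k$ only. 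When $A_k$ is singular, $\overline{\Gamma_k}\setminus\Gamma_k$ is nonempty and typically contains functions whose pole orders fill in gaps of $S$; that is precisely the phenomenon the proposition is about, so retreating to the normalization concedes the point rather than proving it. Note also that your contrapositive hypothesis gives nonsingularity only at the zeros of $h$, not everywhere, so no global normality statement for $\Gamma_k$ is available.

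The gap is genuine but cheap to close, and the fix is more elementary than what you sketch. For a maximal ideal $\mathfrak m$ of $\Gamma_k$ with $h\notin\mathfrak m$ one has $\psi=f/h\in(\Gamma_k)_{\mathfrak m}$ trivially; for $\mathfrak m\ni h$, the corresponding affine point supports a place of $\dv(h)_0$, hence by the contrapositive hypothesis is nonsingular, so $(\Gamma_k)_{\mathfrak m}$ is the valuation ring of the unique place over it and contains $\psi$, since $\dv(\psi)_\infty=rP_\infty^k$ means $\psi$ has no affine poles. Because an integral domain is the intersection of its localizations at all maximal ideals, $\psi\in\Gamma_k$, whence $r\in S$, the desired contradiction. (One small point to record: $\dv(h)_0$ is supported at affine places, since a nonconstant $h\in\Gamma_k$ has strictly positive pole order at $P_\infty^k$ --- distinct $A$-reduced monomials have distinct pole orders, so no cancellation at infinity occurs --- and if $h$ is constant then $\psi\in\Gamma_k$ at once.) Equivalently, one can argue via the denominator ideal $(\Gamma_k:\psi)=\{h\in\Gamma_k: h\psi\in\Gamma_k\}$, which contains every admissible denominator and, if $\psi\notin\Gamma_k$, is contained in a maximal ideal at which $\Gamma_k$ fails to be integrally closed, i.e.\ a singular point. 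With either patch your outline becomes a complete proof; as written, the decisive local-to-global step is missing and the proposed substitute for it is incorrect.
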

\proof
This is proved in \cite[Theorem 4]{Shor11} for a tower defined recursively by one polynomial (i.e. the situation where $H_1=H_2=\dots=H_k$). However, in that setting the fact that one has the same polynomial in each level of the tower is not needed for the proof, so the proof holds for our situation as well.
\qed

Thus, if the affine curve $A_k$ is nonsingular, we have \[\left\{\nu_{\infty,k}(\psi) : \psi\in F_k, \dv(\psi)_\infty = nP_\infty^k \text{ for some }n\in\mathbb{N}_0 \right\} = \left\{\nu_{\infty,k}(h) : h\text{ is an $A$-reduced monomial} \right\}. \]  That is, to determine a basis for $\mathcal{L}(nP_\infty^k)$, rather than considering all rational functions, we need only consider $A$-reduced monomials.

\begin{propn}
The genus of $C_k$ is $g(C_k)=S_0(A,B)$.
\end{propn}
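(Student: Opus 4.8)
The plan is to identify the genus of $C_k$ with the number of gaps in the semigroup of pole orders at $P_\infty^k$, and then to show that this semigroup is exactly $R(A,B)$, so that the number of gaps is $|\NR(A,B)| = S_0(A,B)$. First I would invoke the Riemann--Roch theorem in the form already recalled in the excerpt: for $n$ sufficiently large (say $n \ge 2g-1$), $\ell(nP_\infty^k) = n + 1 - g(C_k)$, so the genus equals the number of positive integers $r$ for which there is \emph{no} function $\psi \in F_k$ with $\dv(\psi)_\infty = rP_\infty^k$. By the proposition immediately preceding this statement, since $A_k$ is assumed nonsingular, the set of pole orders achieved by rational functions on $C_k$ coincides with the set $\{\nu_{\infty,k}(h) : h \text{ an } A\text{-reduced monomial}\}$.

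Next I would compute that latter set explicitly. An $A$-reduced monomial is $h = x_0^{e_0}x_1^{e_1}\cdots x_k^{e_k}$ with $e_0 \ge 0$ and $0 \le e_i < a_i$ for $1 \le i \le k$, and by the displayed valuation formula $\nu_{\infty,k}(h) = \sum_{i=0}^k e_i g_i$. So the semigroup of pole orders is precisely $\left\{\sum_{i=0}^k e_i g_i : e_0 \ge 0,\ 0 \le e_i < a_i \text{ for } i \ge 1\right\}$. I claim this equals $R(A,B) = \langle g_0,\dots,g_k\rangle$. One inclusion is clear. For the other, take any $n = \sum_{i=0}^k n_i g_i$ with all $n_i \ge 0$; applying the ``left shift'' relation $n_i g_i = (q a_i)b_1\cdots b_i g_{i-1}\cdots$ --- precisely equation~\eqref{eq:left shift} in Lemma~\ref{lem:R/NR} --- repeatedly, starting from $x_k$ and working down, lets me push any excess in $n_i$ (for $i \ge 1$) down toward the $g_0$-coefficient without introducing negative coefficients, landing in the $j=0$ normal form of Lemma~\ref{lem:R/NR} with $0 \le e_i < a_i$ for $i \ge 1$. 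Hence the pole semigroup is $R(A,B)$.

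Finally, the complement of the pole semigroup inside $\NZ$ is by definition $\NR(A,B)$, which is finite (since $\gcd(G(A,B))=1$), and its cardinality is $S_m(A,B)$ with $m=0$, i.e. $S_0(A,B)$. Therefore $g(C_k) = S_0(A,B)$, and the closed form for $S_0(A,B)$ from Proposition~\ref{prop:closed-forms} (equivalently Corollary~\ref{cor:frob_compound} together with symmetry) gives the stated value.

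I expect the only real subtlety to be the reduction argument identifying the pole semigroup with $R(A,B)$ --- in particular, checking carefully that the shifting in equation~\eqref{eq:left shift} does not create a negative $g_0$-coefficient and does terminate --- but this is essentially a rerun of the bookkeeping already carried out in the proof of Lemma~\ref{lem:R/NR} for the case $j=0$, so it is routine. Everything else (Riemann--Roch, the preceding proposition on singular points, finiteness of $\NR(A,B)$) is quoted directly.
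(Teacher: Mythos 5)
Your proposal is correct and takes essentially the same route as the paper: Riemann--Roch identifies $g(C_k)$ with the number of non-negative integers that are not pole orders at $P_\infty^k$, the preceding proposition reduces the question to $A$-reduced monomials, and Lemma~\ref{lem:R/NR} with $j=0$ shows their valuations $\sum_i e_i g_i$ fill out exactly $R(A,B)$, so the gaps are $\NR(A,B)$ and the genus is $S_0(A,B)$. The shifting subtlety you flag is already handled in the converse direction of the proof of Lemma~\ref{lem:R/NR} (the coefficient of $g_0$ can only increase when excess is shifted toward it), so no additional argument is needed.
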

\proof
%Let $\ell(D)=\dim\mathcal{L}(D)$.
By Riemann-Roch, $\dim\mathcal{L}(nP_\infty^k)=n+1-g$ for $n\geq2g-1$.  In other words, the genus of $C_k$ is the number of non-negative integers which are not the pole order of any $A$-reduced monomial.  Since the pole order of the monomial $\prod_{i=0}^k x_i^{e_i}$ is $\sum_{i=0}^k e_i g_i$, we see that $g(C_k)$ is the cardinality of the set of non-representable integers $\NR(A,B)$, which is $S_0(A,B)$.
\qed

\begin{lemma}\label{lem:coords_not_zero}
Let $P=(p_0,\dots,p_k)\in C_k$ be an affine point.  If $f_i'(p_i)=0$ for some $0\leq i< k$, then $p_i\neq0$ for all $0< i\leq k$.
\end{lemma}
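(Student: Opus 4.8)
The plan is to recast the statement as one about smooth points of $A_k$. Since $A_k$ is assumed to have no singular affine points, it suffices to show that a point $P=(p_0,\dots,p_k)\in A_k$ violating the conclusion — one with $p_j=0$ for some $j$ while $f_i'(p_{i-1})=0$ for some $i\le j$ — would be a \emph{singular} point of $A_k$, contradicting that hypothesis. Thus I would prove the contrapositive, and the whole argument reduces to a rank computation for the Jacobian of $H_1,\dots,H_k$ at $P$; the only property of the $f_i$ used beyond their degrees is separability.

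First I would record the two elementary uses of separability. If $p_j=0$, then $H_j(p_{j-1},p_j)=0$ forces $f_j(p_{j-1})=0$, so $p_{j-1}$ is a root of the separable polynomial $f_j$, hence a \emph{simple} root, hence $f_j'(p_{j-1})\neq 0$. In particular an index $j$ at which a coordinate vanishes can never be an index $i$ at which $f_i'(p_{i-1})=0$, so in the configuration to be excluded we may assume $i<j$. (Dually, $f_i'(p_{i-1})=0$ forces $f_i(p_{i-1})\neq 0$, whence $p_i^{a_i}=f_i(p_{i-1})\neq 0$ and $p_i\neq 0$; this settles the borderline case $i=j$ and needs nothing further.)

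The heart of the matter is the Jacobian rank count. Because $H_s=x_s^{a_s}-f_s(x_{s-1})$, the matrix $J=\bigl(\partial H_s/\partial x_t\bigr)_{1\le s\le k,\ 0\le t\le k}$ evaluated at $P$ has, in its $s$-th row, nonzero entries only in columns $s-1$ and $s$, namely $-f_s'(p_{s-1})$ and $a_s p_s^{a_s-1}$. Now look at the $j-i+1$ rows indexed by $i,i+1,\dots,j$: row $i$ has column-$(i-1)$ entry $-f_i'(p_{i-1})=0$, so it is supported on column $i$ alone; each row $s$ with $i<s<j$ is supported on columns $s-1$ and $s$, which lie in $\{i,i+1,\dots,j-1\}$; and row $j$ has column-$j$ entry $a_j p_j^{a_j-1}=0$ — here using $p_j=0$ and $a_j\ge 2$ — so it is supported on column $j-1$ alone. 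Hence these $j-i+1$ rows all lie in the subspace of $\mathbb{C}^{k+1}$ of vectors supported on the coordinates $i,i+1,\dots,j-1$, which has dimension $j-i$, so they are linearly dependent. Therefore $\operatorname{rank}J\le k-1$, and $P$ is a singular point of $A_k$ — contradicting the hypothesis that $A_k$ has no singular affine points. This proves the lemma.

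I expect the only real care needed to be organizational: keeping the index $j$ of a vanishing coordinate distinct from the index $i$ of a vanishing derivative $f_i'(p_{i-1})$, and verifying that the block of rows $i,\dots,j$ is chosen so that it contains exactly one more row than the dimension of the coordinate subspace it is trapped inside. There is no deeper obstacle — separability of the $f_i$ and the standing superelliptic hypothesis $a_i\ge 2$ (needed only to pass from $p_j=0$ to $a_jp_j^{a_j-1}=0$) supply everything, and framing the claim as a statement about singular points of $A_k$ lets the nonsingularity hypothesis finish it. One could equivalently phrase the computation by noting that the maximal minor of $J$ obtained by deleting column $m$ is, up to sign, $\bigl(\prod_{s=1}^{m}f_s'(p_{s-1})\bigr)\bigl(\prod_{s=m+1}^{k}a_s p_s^{a_s-1}\bigr)$, so that smoothness at $P$ is exactly the condition that every index with $p_s=0$ precedes every index with $f_s'(p_{s-1})=0$; the row-dependence argument above is just the coordinate-free version of this.
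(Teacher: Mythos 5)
Your rank computation is correct, but it does not prove the lemma as stated: you have built the inequality $i\le j$ into the ``violating configuration,'' whereas the lemma asserts that one vanishing derivative $f_i'(p_{i-1})=0$ forces \emph{every} coordinate $p_1,\dots,p_k$ to be nonzero, including those $p_j$ with $j<i$. Your argument says nothing about that case, and it cannot: your own closing minor formula shows that $P$ is a smooth point of $A_k$ exactly when every index with $p_s=0$ is strictly smaller than every index with $f_s'(p_{s-1})=0$, so nonsingularity of $A_k$ does not exclude a zero coordinate lying \emph{below} the level of a vanishing derivative. Indeed the lemma as printed is false. Take $k=2$, $A=(3,3)$, $B=(2,2)$, $f_1(x)=x^2-1$, $f_2(x)=x^2+1$: these are separable of degree $b_i=2$, all gcd conditions hold, and $g=S_0(A,B)=6\ge2$. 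The affine curve $x_1^3=x_0^2-1$, $x_2^3=x_1^2+1$ is nonsingular (here $f_s'(x)=2x$, so by the minor criterion a singular point would need $p_{t-1}=p_s=0$ with $t\le s$, and the curve equations rule out all three such pairs), yet $P=(1,0,1)$ lies on it with $p_1=0$ and $f_2'(p_1)=0$ (also $f_1'(p_1)=0$, so the hypothesis holds under either reading of the index in the statement); the Jacobian at $P$ has rows $(-2,0,0)$ and $(0,0,3)$, of rank $2$. So the omitted case is not a patchable oversight: what your argument actually establishes is the correct one-sided statement that if $f_i'(p_{i-1})=0$ then $p_j\neq0$ for all $j\ge i$, equivalently, if $p_j=0$ then $f_i'(p_{i-1})\neq0$ for all $i\le j$.

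For comparison, the paper's own proof is the same Jacobian idea compressed to one sentence (``the rank of the Jacobian matrix would drop''), and it asserts the rank drop in precisely the configuration $j<i$ where it does not occur; your explicit minors $\pm\bigl(\prod_{s\le m}f_s'(p_{s-1})\bigr)\bigl(\prod_{s>m}a_sp_s^{a_s-1}\bigr)$ are the sharper, correct characterization, so your attempt exposes an overstatement in the source rather than missing an idea it contains. The one-sided version still suffices for Proposition~\ref{prop:no-affine-points} after a small rearrangement: let $j^*$ be the largest index with $p_{j^*}=0$; steps $i\le j^*$ fall under Cases 1--2 and steps $i>j^*$ under Cases 1 or 3, so $\ord_P(x_i-p_i)$ is non-increasing up to level $j^*$ and non-decreasing afterwards, giving $\ord_P(x_{j^*})=1$, and the telescoping computation of $\nu_P(\omega)$ goes through verbatim because any Case-3 levels above $j^*$ involve coordinates that are units at $P$ and contribute nothing. (At the point $P=(1,0,1)$ above, one checks directly that $\nu_P(\omega)=\nu_P(\mathrm{d}x_0)-2\nu_P(x_1)-2\nu_P(x_2)=2-2-0=0$, consistent with that proposition.)
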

\proof
In this case, the rank of the Jacobian matrix would drop, and so $P$ would be a singular point. This is a contradiction to the assumption that the affine curve $A_k$ is nonsingular.
\qed

In order to find a basis for the space of holomorphic $q$-differentials, we will consider the differential \[\omega=\frac{\mathrm{d}x_0}{\prod_{i=1}^k x_i^{a_i-1}}.\]

\begin{propn}\label{prop:no-affine-points}
Let $P\in C_k$ be an affine point.  Then $\nu_P\left(\omega\right)=0.$
\end{propn}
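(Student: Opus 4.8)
The plan is to verify that $\omega = \mathrm{d}x_0 / \prod_{i=1}^k x_i^{a_i-1}$ has neither a zero nor a pole at any affine point $P = (p_0,\dots,p_k)$ of $C_k$, by working locally and splitting into cases according to whether the various derivatives $f_i'(p_{i-1})$ vanish. The basic computational input is that on the curve we have $x_i^{a_i} = f_i(x_{i-1})$, so differentiating gives $a_i x_i^{a_i-1}\,\mathrm{d}x_i = f_i'(x_{i-1})\,\mathrm{d}x_{i-1}$; chaining these relations expresses $\mathrm{d}x_0$ (hence $\omega$) in terms of $\mathrm{d}x_i$ for any index $i$, which is what lets one choose a good local parameter at $P$.

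First I would handle the generic case: suppose $f_i'(p_{i-1}) \ne 0$ for all $i$ with $1 \le i \le k$. Then each $x_{i-1}$ is a local parameter at $P$ iff $x_i$ is, and in fact one checks that $x_0 - p_0$ is a uniformizer at $P$ (the projection to $C_0 = \mathbb{P}^1$ is unramified at $P$). Taking $t = x_0 - p_0$, we have $\mathrm{d}x_0 = \mathrm{d}t$, so $\nu_P(\omega) = -\sum_{i=1}^k (a_i-1)\,\nu_P(x_i)$; but each $p_i \ne 0$ is impossible to conclude directly, so instead note that if some $p_i = 0$ then $f_i(p_{i-1}) = p_i^{a_i} = 0$, and one must argue this does not force a pole — indeed $x_i$ is regular and nonzero-or-zero but since $f_i'(p_{i-1}) \ne 0$, from $a_i x_i^{a_i-1}\,\mathrm{d}x_i = f_i'(x_{i-1})\,\mathrm{d}x_{i-1}$ and $x_{i-1}$ a uniformizer we get $\nu_P(x_i^{a_i-1}) = \nu_P(x_i^{a_i}) - \nu_P(x_i)$, forcing $\nu_P(x_i) = 0$. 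Hence $\nu_P(\omega) = 0$ in this case. The remaining case is that $f_i'(p_{i-1}) = 0$ for some $i$; here Lemma~\ref{lem:coords_not_zero} applies and tells us $p_j \ne 0$ for all $0 < j \le k$. Now I would pick the largest such index $i$, use $x_i$ (or $x_0$, whichever is a uniformizer — one needs to check that $x_i$ serves as a local parameter when $f_i'(p_{i-1})$ vanishes, using nonsingularity and that $x_i^{a_i} - f_i(x_{i-1})$ has a simple factor in $x_i$) as the local parameter $t$, and rewrite $\mathrm{d}x_0$ as $\left(\prod_{\ell=1}^{i} \frac{a_\ell x_\ell^{a_\ell-1}}{f_\ell'(x_{\ell-1})}\right)\mathrm{d}x_i$ times the appropriate factors, carefully tracking orders of vanishing of each $f_\ell'(x_{\ell-1})$ and each $x_\ell$ at $P$. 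Since all $p_j \ne 0$ for $j \ge 1$, every $x_j$ with $j \ge 1$ has valuation $0$ at $P$, so the only contributions to $\nu_P(\omega)$ come from $\mathrm{d}x_0$ and the chain of $f_\ell'$ factors, and these should cancel.

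The main obstacle I anticipate is the bookkeeping in this second case: one must show that the zeros introduced by $\mathrm{d}x_0$ (from ramification of the map $C_k \to C_0$ at $P$, where $f_i'$ vanishes) are exactly cancelled by poles coming from the denominator $\prod x_i^{a_i-1}$, and this requires knowing the precise order to which $f_i'(x_{i-1})$ vanishes along the curve at $P$ versus the order of $x_i$ — but in the $p_i \ne 0$ regime, $x_i$ contributes nothing, so the cancellation must instead be against $\mathrm{d}x_i$ for the index $i$ one selected, which is a uniformizer by nonsingularity. The subtle point is establishing that $x_i$ (for suitable $i$) really is a local uniformizer at $P$ when $f_i'(p_{i-1}) = 0$; this is where nonsingularity of $A_k$ — equivalently, the Jacobian criterion used in Lemma~\ref{lem:coords_not_zero} — must be invoked a second time, now to control ramification orders rather than just to force $p_j \ne 0$. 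Once the correct uniformizer is identified and the chain rule applied, the valuation computation should collapse to $\nu_P(\omega) = 0$ by a direct cancellation.
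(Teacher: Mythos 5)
There is a genuine gap, and it sits exactly where the real work of the proposition lies: affine points with some $p_i=0$ for $i\ge 1$, i.e.\ points ramified over lower levels of the tower. In your first case (all $f_i'(p_{i-1})\neq 0$) you claim that $x_0-p_0$ is a uniformizer at $P$ and that the relation $a_i x_i^{a_i-1}\,\mathrm{d}x_i=f_i'(x_{i-1})\,\mathrm{d}x_{i-1}$ ``forces $\nu_P(x_i)=0$.'' Both assertions are false precisely when some $p_i=0$: by definition $\nu_P(x_i)>0$ there, and the identity $\nu_P(x_i^{a_i-1})=\nu_P(x_i^{a_i})-\nu_P(x_i)$ is an empty additivity statement that forces nothing. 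In fact, if $p_{i_1}=\dots=p_{i_\ell}=0$ are the vanishing coordinates, then $\nu_P(x_{i-1}-p_{i-1})=a_i\,\nu_P(x_i)$ at each such index, so $x_0-p_0$ vanishes to order $a_{i_1}\cdots a_{i_\ell}$ (it is $x_k-p_k$ that is the uniformizer), $\nu_P(\mathrm{d}x_0)=a_{i_1}\cdots a_{i_\ell}-1>0$, and the zeros of the denominator $\prod x_{i_r}^{a_{i_r}-1}$ sum, by a telescoping computation, to exactly the same amount. That cancellation is the content of the proposition at ramified affine points, and your argument asserts facts that contradict it rather than proving it. (A quick repair is available from the very relation you wrote down: chaining it gives $\omega=\bigl(\prod_{i=1}^k a_i/f_i'(x_{i-1})\bigr)\mathrm{d}x_k$, and since in this case every $f_i'(p_{i-1})\neq 0$ and $x_k-p_k$ is a uniformizer, $\nu_P(\omega)=0$ follows at once.)

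Your second case (some $f_i'(p_{i-1})=0$, so by Lemma~\ref{lem:coords_not_zero} all $p_j\neq 0$ for $j\ge 1$) also picks the wrong local parameter: if $f_i'(p_{i-1})=0$ then $x_i-p_i$ vanishes to order at least $2$ at $P$, so it cannot serve as the uniformizer you propose. The correct observation is simpler: since no $p_j$ vanishes for $j\ge 1$, the orders $\nu_P(x_j-p_j)$ are nondecreasing in $j$, so $x_0-p_0$ is the uniformizer, $\nu_P(\mathrm{d}x_0)=0$, every $\nu_P(x_j)=0$, and there is nothing to cancel. This part is fixable bookkeeping, but the first case as written does not go through.
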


\begin{proof}
Suppose $P=(p_0,\dots,p_k)$ is a nonsingular affine point.  
For this proof, we will calculate $\nu_P(\mathrm{d}x_0)$ and $\nu_P(x_i)$ for $1\leq i\leq k$.

We begin with a parameterization of $C_k$ at $P$ given by 
\[x_i(t)=p_i+\sum\limits_{j=1}^\infty {c_{i,j}t^j}\]
for $0\leq i\leq k$ and some $c_{i,j}\in\mathbb{C}$.  Since $P$ is nonsingular, we have at least one $c_{i,1}\neq0$ for some $i$.

For each $i$ we have $x_{i}^{a_i} = f_{i}(x_{i-1}).$  Since $\deg(f_i)=b_i$, $f_i$ equals its Taylor polynomial of degree $b_i$ centered at $x_{i-1}=p_{i-1}$.  We have $f_{i}(x_{i-1})=\sum_{\ell=0}^{b_i}\frac{f^{(\ell)}(p_{i-1})}{\ell!}(x_{i-1}(t)-p_{i-1})^\ell$, so 
\begin{align}\label{eqn:coefficients}
\left(p_{i}+\sum\limits_{j=1}^\infty c_{i,j}t^j\right)^{a_{i}} = \sum_{\ell=0}^{b_i} \frac{f_i^{(\ell)}(p_{i-1})}{\ell!}\left(\sum_{j=1}^\infty c_{i-1,j}t^j\right)^\ell.
\end{align}
%Expanding, 
%\[p_{i}^{a_i} + \binom{a_i}{1}p_{i}^{a_i-1}\sum\limits_{j=1}^\infty c_{i,j}t^j+\binom{a_i}{2}p_{i}^{a_i-2}\left(\sum\limits_{j=1}^\infty c_{i,j}t^j\right)^2+\dots = f_i(p_{i-1})+\sum\limits_{j=1}^\infty c_{i-1,j}t^j\left(f_i'(p_{i-1})+\frac{f_i''(p_{i-1})}{2!}\sum\limits_{j=1}^\infty c_{i-1,j}t^j+\dots\right).\]
Equating coefficients of $1,t,t^2,\dots$, we have
\begin{align*}
p_{i}^{a_i} & = f_i(p_{i-1}), \\
\binom{a_i}{1} p_{i}^{a_i-1}c_{i,1} & = f_i'(p_{i-1})c_{{i-1},1}, \\
\binom{a_i}{1} p_{i}^{a_i-1}c_{i,2}+\binom{a_i}{2}p_{i}^{a_i-2}c_{i,1}^2 & = f_i'(p_{i-1})c_{i-1,2}+\frac{f_i''(p_{i-1})}{2!}c_{i-1,1}^2,
\end{align*}
and so on.

By Lemma \ref{lem:coords_not_zero} if $f'(p_j)=0$ for some $j<k$ then $p_i\neq0$ for all $i>0$. Equivalently, if $p_{i}=0$ for some $i>0$, then $f'(p_j)\neq0$ for all $j<k$.  We proceed by considering the three possibilities for each $i$: where $p_{i}\neq0$ and $f_i'(p_{i-1})\neq 0$; where $p_{i}=0$; or where $f_i'(p_{i-1})=0$.

Case 1. Suppose $p_{i}\neq0$ and $f_i'(p_{i-1})\neq0$.  
Suppose $\ord_P(x_{i-1}-p_{i-1})=m\geq1$. Then $c_{i-1,j}=0$ for $1\leq j<m$ and $c_{i-1,m}\neq 0$. Equating coefficients of $t^1,t^2,\dots,t^{m-1}$, we find $c_{i,j}=0$ for $1\leq j<m$. Equating coefficients of $t^{m}$, we have \[c_{i,m} = \frac{f_i'(p_{i-1})c_{i-1,m}}{a_ip_{i}^{a_i-1}}\neq0,\] so $\ord_P(x_{i}-p_{i})=m=\ord_P(x_{i-1}-p_{i-1})$.

Case 2. Suppose $p_{i}=0$ and $f_i'(p_{i-1})\neq0$.  Since $p_{i}=0$, the coefficients of $t,t^2,\dots,t^{a_i-1}$ on the left side of equation~\eqref{eqn:coefficients} are all zero, so $c_{i-1,1}=\dots=c_{i-1,a_i-1}=0$.  If $\ord_P(x_{i})=m$, 
then $c_{i,1}=\dots=c_{i,m-1}=0$ and $c_{i,m}\neq0$, so the coefficients of $t,t^2,\dots,t^{ma_i-1}$ are all zero, implying $c_{i-1,1}=\dots=c_{i-1,ma_i-1}=0$.  Equating coefficients of $t^{ma_i}$, we see $c_{i-1,ma_i}=c_{i,m}^{a_i}/f_i'(p_{i-1})\neq0$.  Thus, $\ord_P(x_{i-1}-p_{i-1})=ma_i=a_i\cdot\ord_P(x_{i}-p_{i})$. 

Case 3. Suppose $f_i'(p_{i-1})=0$ and $p_{i}\neq0$.  For some $r$ with $1\leq r<b_i$, we may assume $f_i'(p_{i-1})=f_i''(p_{i-1})=\dots=f_i^{(r)}(p_{i-1})=0$  and $f_i^{(r+1)}(p_{i-1})\neq0$.  Suppose $\ord_P(x_{i-1}-p_{i-1})=m\geq1$. 
Then $c_{i-1,j}=0$ for $1\leq j<m$ and $c_{i-1,m}\neq0$.  Equating coefficients of $t,t^2,\dots,t^{m+r}$, we find $c_{i,j}=0$ for $1\leq j<m+r$ and \[c_{i,m+r}=\frac{f_i^{(r+1)}(p_{i-1})} {(r+1)!}\cdot \frac{c_{i-1,m}^{r+1}}{a_ip_{i}^{a_i-1}}\neq0.\]  Thus, $\ord_P(x_{i}-p_{i})=m+r=r+\ord_P(x_{i-1}-p_{i-1})$.

With the parametrizations, we can now compute the divisors associated to the coordinate functions $x_1,\dots,x_k$ and also $\mathrm{d}x_0$.  We consider the two cases where $P$ is either ramified or unramified.

First, suppose $P=(p_0,p_1,\dots,p_k)$ is ramified in at least one level of the tower.  That is, $P$ has $\ell$ coordinates equal to zero for some $1\leq \ell\leq k$. (Note that we are only considering whether coordinates $p_1,\dots,p_k$ are zero or not.) Then we have indices $1\leq i_1<i_2<\dots<i_\ell\leq k$ and $1\leq j_1<j_2<\dots<j_{k-\ell}\leq k$ such that $p_{i_r}=0$ for $1\leq r\leq \ell$ and $p_{j_s}\neq0$ for $1\leq s\leq k-\ell$.  It will be helpful to keep track of certain products of indices which correspond to zeros in $P$.  Let $Z_r=\prod_{t=r}^{\ell}a_{i_t}$.

Since there are zeros, we must have $f_i'(p_i)\neq0$ for all $i$, so we can ignore Case 3 above. In Cases 1 and 2, $\ord_P(x_i-p_i)$ cannot increase as $i$ increases, so we must have $\ord_P(x_k-p_k)=1$.  Since $\ord_P(x_{i_r-1}-p_{i_r-1})=a_{i_r} \ord_P(x_{i_r})$ and $\ord_P(x_{j_s-1}-p_{j_s-1})=\ord_P(x_{j_s}-p_{j_s})$, we conclude that $\ord_P(x_{i_r})=Z_{r+1}$.  Furthermore, note that $\ord_P(x_0-p_0)=Z_1$.  Since this is not zero, we see that $\ord_P(x_0')=Z_1-1$.

Thus, $\nu_P(x_{i_r})=Z_{r+1}$, $\nu_P(x_{j_s})=0$, and $\nu_P(\mathrm{d}x_0)=Z_1-1$.  Also, note $a_{i_r}\nu_P(x_{i_r})=a_{i_r}Z_{r+1}=Z_r$.  Combining these, we see that 
\begin{align*}
\nu_P(\omega)=\nu_P\left(\frac{\mathrm{d}x_0}{x_1^{a_1-1}x_2^{a_2-2}\cdots x_k^{a_k-1}}\right) &
= \nu_P(\mathrm{d}x_0)-\sum_{j=1}^\ell (a_{i_j}-1)\nu_P(x_{i_j}) \\
&= Z_1-1-\sum_{j=1}^\ell (Z_j-Z_{j+1}) \\
&= 0,
\end{align*} 
as the summation is a telescoping sum.

Next, suppose $P$ is unramified throughout the tower.  That is, $p_1,\dots,p_k\neq0$. Considering only Cases 1 and 3 above, we see $\ord_P(x_i-p_i)$ can only increase as $i$ increases. Thus, $\ord_P(x_0-p_0)=1$, so $\ord_P(\mathrm{d}x_0)=1-1=0$.  Also, $\nu_P(x_i)=0$ for all $1\leq i\leq k$.  Therefore, 
\[\nu_P(\omega)=\nu_P\left(\frac{\mathrm{d}x_0}{x_1^{a_1-1}x_2^{a_2-1}\cdots x_k^{a_k-1}}\right)=0.\]

Thus, for any affine point $P\in C_k$, $\nu_P(\omega)=0$.
\end{proof}

We could find parameterizations at the point at infinity as well, but since we're only missing one point and we know the divisor associated to a differential form on a curve of genus $g$ has degree $2g-2$, we can calculate the divisor associated to $\omega^q$.

\begin{cor}
For any $q\geq1$ and $k\geq1$, $\dv\left(\omega^q\right) = (2g-2)qP_\infty^k$.
\end{cor}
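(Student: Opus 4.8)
The plan is to leverage Proposition~\ref{prop:no-affine-points} to pin down the support of $\dv(\omega)$, and then use the degree of a canonical divisor to identify the coefficient. First I would observe that $\omega = \mathrm{d}x_0 / \prod_{i=1}^k x_i^{a_i-1}$ is a nonzero meromorphic differential on $C_k$ (it is the product of the nonzero differential $\mathrm{d}x_0$ with the nonzero rational function $\prod_{i=1}^k x_i^{-(a_i-1)}$), so $\dv(\omega)$ is a well-defined divisor on $C_k$. By Proposition~\ref{prop:no-affine-points}, $\nu_P(\omega) = 0$ at every affine point $P \in C_k$, and since $C_k$ has exactly one point at infinity, $P_\infty^k$, the divisor $\dv(\omega)$ is supported entirely at $P_\infty^k$; that is, $\dv(\omega) = m\,P_\infty^k$ for some integer $m$.

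Next I would invoke the fact that the divisor associated to any nonzero differential on a nonsingular projective curve of genus $g$ is a canonical divisor, hence has degree $2g-2$. Since $\deg(\dv(\omega)) = m$, this forces $m = 2g-2$, so $\dv(\omega) = (2g-2)P_\infty^k$. (Here $g = g(C_k) = S_0(A,B) \geq 2$ by the preceding propositions, so in particular $2g-2 > 0$, consistent with $\omega$ having a pole only at infinity.)

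Finally, for the $q$-differential $\omega^q = \omega^{\otimes q}$, the divisor scales: $\dv(\omega^q) = q\,\dv(\omega)$, since in any local coordinate $t$ one has $\omega = h\,\mathrm{d}t$ and $\omega^q = h^q\,(\mathrm{d}t)^q$, so $\nu_P(\omega^q) = q\,\nu_P(h) = q\,\nu_P(\omega)$ at every point $P$. Therefore $\dv(\omega^q) = q(2g-2)P_\infty^k = (2g-2)q\,P_\infty^k$, as claimed.

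I do not expect any genuine obstacle here: the entire content is already carried by Proposition~\ref{prop:no-affine-points} together with the standard fact that a canonical divisor has degree $2g-2$. The only points requiring a word of care are confirming that $\omega$ is not identically zero (so that $\dv(\omega)$ makes sense and the degree formula applies) and that $C_k$ really has a single point at infinity, both of which are established earlier in this section.
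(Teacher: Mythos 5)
Your proposal is correct and follows essentially the same route as the paper: use Proposition~\ref{prop:no-affine-points} to rule out affine points in the support, use uniqueness of the point at infinity, and then determine the coefficient from the degree $(2g-2)q$ of the divisor of a (nonzero) $q$-differential. The only cosmetic difference is that you first pin down $\dv(\omega)=(2g-2)P_\infty^k$ and then scale by $q$, while the paper applies the degree argument to $\omega^q$ directly; the content is identical.
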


\begin{proof}
By Proposition~\ref{prop:no-affine-points}, there are no affine points in the support of this principal divisor.  Since there is only one non-affine point $P_\infty^k$ in the nonsingular model of $C_k$, and since the principal divisor of a $q$-differential has degree $(2g-2)q$, the associated divisor must be $(2g-2)qP_\infty^k$.
\end{proof}

\subsection{The $q$-weight of the point at infinity}
\label{sec:tower-calculations}
Now that we have found the divisor associated to a particularly nice $q$-differential, we can build a basis of holomorphic $q$-differentials.  

\begin{propn}
A basis for the vector space of holomorphic $q$-differentials on $C_k$ is 
\[\mathfrak{B}_q(A,B):=\left\{ \omega^q\cdot \prod\limits_{i=0}^k x_i^{e_i} : e_0\geq0,\ 0\leq e_i<a_i \text{ for } 1\leq i\leq k,\text{ and } \sum\limits_{i=0}^k e_i g_i \leq (2g-2)q\right\}.\]
\end{propn}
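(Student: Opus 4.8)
The plan is to identify the holomorphic $q$-differentials inside the one-dimensional space of $q$-differentials spanned over $K(C_k)$ by $\omega^q$, and then count. Since $\omega$ is nonzero, every $q$-differential can be written $\eta = f\,\omega^q$ for a unique $f \in K(C_k)$, and $\eta$ is holomorphic exactly when $\operatorname{div}(f) + \operatorname{div}(\omega^q) \ge 0$. By the corollary just proved, $\operatorname{div}(\omega^q) = (2g-2)q\,P_\infty^k$, so holomorphy of $f\,\omega^q$ is equivalent to $f \in \mathcal L\bigl((2g-2)q\,P_\infty^k\bigr)$. Thus $\mathfrak B_q(A,B)$ is a basis of $H^0(C_k,(\Omega^1)^q)$ if and only if the functions $\prod_{i=0}^k x_i^{e_i}$, over the index set described, form a basis of $\mathcal L\bigl((2g-2)q\,P_\infty^k\bigr)$.

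To establish that, I would proceed in three steps. First, each such monomial $h = \prod x_i^{e_i}$ with $0 \le e_i < a_i$ for $i \ge 1$ is $A$-reduced, hence (by the lemma computing $\nu_{\infty,k}$ of products) has $\operatorname{div}(h)_\infty = \bigl(\sum e_i g_i\bigr) P_\infty^k$ — using that the only non-affine point is $P_\infty^k$ and that $\nu_{\infty,k}(x_i) = g_i$ — so the condition $\sum e_i g_i \le (2g-2)q$ places $h$ in $\mathcal L\bigl((2g-2)q\,P_\infty^k\bigr)$; this gives $\mathfrak B_q(A,B) \subseteq H^0(C_k,(\Omega^1)^q)$. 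Second, these monomials are linearly independent: distinct $A$-reduced monomials have distinct pole orders at $P_\infty^k$ by the lemma (whose proof invokes Lemma~\ref{lem:R/NR} with $j=0$), and functions with distinct valuations at a common point are linearly independent. Third, I would count: the number of $A$-reduced monomials with $\sum_{i=0}^k e_i g_i = n$ is $1$ if $n \in R(A,B)$ and $0$ otherwise, so $\#\mathfrak B_q(A,B)$ equals the number of elements of $R(A,B)$ in $\{0,1,\dots,(2g-2)q\}$, which is $(2g-2)q + 1 - \#\bigl(\NR(A,B)\cap[0,(2g-2)q]\bigr)$. Since $\NR(A,B)$ is finite with largest element the Frobenius number $F = 2g-1$ (here $g = S_0(A,B)$, so $F = 2S_0(A,B)-1 = 2g-1 \le (2g-2)q$ for $q \ge 1$ when $g \ge 2$), all of $\NR(A,B)$ lies in $[0,(2g-2)q]$, giving $\#\mathfrak B_q(A,B) = (2g-2)q + 1 - g$. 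By Riemann–Roch this equals $\dim\mathcal L\bigl((2g-2)q\,P_\infty^k\bigr) = \dim H^0(C_k,(\Omega^1)^q) = d_q$ (noting $(2g-2)q \ge 2g-1$ for $q \ge 1$, $g \ge 2$, so $\ell\bigl((2g-2)q\,P_\infty^k\bigr) = (2g-2)q + 1 - g$, matching $d_1 = g$ and $d_q = (g-1)(2q-1)$).

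The one genuine subtlety — and the step I expect to be the main obstacle — is upgrading "these monomials lie in $\mathcal L\bigl((2g-2)q\,P_\infty^k\bigr)$ and are the right number" to "they span it," i.e. ruling out the existence of some rational function in that Riemann–Roch space whose pole order at $P_\infty^k$ is a representable integer $\le (2g-2)q$ but which is not a linear combination of $A$-reduced monomials. This is exactly handled by the earlier proposition (adapted from \cite[Theorem 4]{Shor11}): any $\psi \in F_k$ with $\operatorname{div}(\psi)_\infty = r P_\infty^k$ for $r \notin S$ (the semigroup generated by $\Gamma_k$) would force a singular affine point, which is excluded by hypothesis. Hence every pole order that occurs at $P_\infty^k$ is realized by an $A$-reduced monomial, and the valuation-gap argument of the genus computation shows the $A$-reduced monomials of pole order $\le (2g-2)q$ already span $\mathcal L\bigl((2g-2)q\,P_\infty^k\bigr)$. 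Combining this spanning with the dimension count from Riemann–Roch, or alternatively combining linear independence with the matching cardinalities $\#\mathfrak B_q(A,B) = d_q$, finishes the proof; I would present it via the linear-independence-plus-count route since it is cleanest, invoking the no-singular-point hypothesis only to guarantee the count is complete.
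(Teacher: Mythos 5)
Your overall route (linear independence from distinct valuations at $P_\infty^k$ via Lemma~\ref{lem:R/NR}, plus a cardinality count matched against $d_q$) is the same as the paper's, but your count breaks down at $q=1$, and this is a genuine gap rather than a cosmetic slip. You assert $F(R(A,B))=2g-1\le(2g-2)q$ for all $q\ge1$ when $g\ge2$, conclude that all of $\NR(A,B)$ lies in $[0,(2g-2)q]$, and get $\#\mathfrak{B}_q(A,B)=(2g-2)q+1-g$. For $q=1$ the inequality is false ($2g-1>2g-2$), so the Frobenius number falls just outside the range; your formula then gives $\#\mathfrak{B}_1=g-1$, which does not equal $d_1=g$, so taken literally your argument would conclude that $\mathfrak{B}_1$ is \emph{not} a basis. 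The accompanying Riemann--Roch step has the same off-by-one: $\deg\bigl((2g-2)P_\infty^k\bigr)=2g-2<2g-1$, so you may not invoke $\ell(D)=\deg D+1-g$; that divisor is canonical (it is $\dv(\omega)$), and $\ell\bigl((2g-2)P_\infty^k\bigr)=g$, not $g-1$.

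The repair is exactly the case split the paper makes. For $q=1$, the largest element $2g-1$ of $\NR(A,B)$ lies outside $[0,2g-2]$, while $2g-2$ itself is representable (Corollary~\ref{cor:2g=F+1}), so only $g-1$ non-representable integers lie in the range and $\#\mathfrak{B}_1=(2g-2)+1-(g-1)=g=d_1$. For $q\ge2$ one does have $(2g-2)q\ge 2g-1$ (since $g\ge2$), and your count $(2g-2)q+1-g=(g-1)(2q-1)=d_q$ is correct. With that correction the rest of your argument (holomorphy of the monomial $q$-differentials, distinctness of pole orders from Lemma~\ref{lem:R/NR} with $j=0$, and basis via independence plus matching cardinality) goes through and coincides with the paper's proof; note also that on the independence-plus-count route the appeal to the earlier singular-point proposition is not needed for this proposition itself, since once $g(C_k)=S_0(A,B)$ is known, a linearly independent set of $d_q$ holomorphic $q$-differentials is automatically a basis.
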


\begin{proof}
Let $h_1,h_2\in \mathfrak{B}_q(A,B)$, so $h_i=\omega^q\prod\limits_{i=0}^k x_i^{e_{\ell,i}}$ for $\ell=1,2$, with $e_{\ell,0}\geq0$ and $0\leq e_{\ell,i}<a_i$ for $1\leq i\leq k$. Suppose $\nu_{\infty,k}(h_1)=\nu_{\infty,k}(h_2)$.  Then $\sum\limits_{i=0}^k e_{1,i} g_i = \sum\limits_{i=0}^k e_{2,i} g_i$.  By Lemma~\ref{lem:R/NR}, with $j=0$, since these sums are equal, their coefficients must be equal, so $h_1=h_2$.  Thus, elements of $\mathfrak{B}_q(A,B)$ have different orders of vanishing at infinity, and so they are linearly independent.

Note also that $\nu_{\infty,k}(h_1)=(2g-2)q-\sum e_{1,i} g_i\geq0$, so $h_1$ is a holomorphic $q$-differential.  Therefore, to prove $\mathfrak{B}_q(A,B)$ is a basis, it remains to show $\# \mathfrak{B}_q(A,B)=d_q$.

Let \[R:=\left\{\sum\limits_{i=0}^k e_i g_i : e_0\geq0\text{ and } 0\leq e_i<a_i\text{ for } 1\leq i\leq k\right\}.\]  By Lemma~\ref{lem:R/NR}, any number representable as a non-negative linear combination of elements of $G(A,B)$ can be written in this form, so $\NR$, the complement of $R$ in $\NZ$, contains exactly $g=S_0(A,B)$ non-representable integers.  The largest element of $\NR$ is $2g-1$.  Also, note that by Corollary~\ref{cor:2g=F+1}, $2g-2\not\in \NR.$

Let $\NR_q:=\{(2g-2)q-s : s\in \NR, (2g-2)q-s\geq0 \}$.  These are the missing orders of vanishing at infinity. If $q=1$, then since $2g-1\in \NR$ and $2g-2\not\in \NR$, we have $\#\NR_1=g-1$.  If $q>1$, by our earlier assumption that $g\ge2$, we have $(2g-2)q>2g-1>0$, so $\#\NR_q=g$.  

Finally, if we let $R_q:=\{(2g-2)q-r : r\in R, (2g-2)q-r\geq0 \},$ note that $\#R_q=\#\mathfrak{B}_q(A,B)=(2g-2)q+1-\#\NR_q$.  If $q=1$, then $\#R_1=(2g-2)+1 - \#\NR_1=g=d_1$.  If $q\geq2$, then $\#R_q=(2g-2)q+1 - \#\NR_q = (g-1)(2q-1)=d_q.$  Thus, $\#\mathfrak{B}_q(A,B)=d_q$ for all $q$, as desired.
\end{proof}

\begin{thm}\label{thm:q-wt-p-infty-compound}
The $q$-Weierstrass weight of $P_\infty^k$ on $C_k$, a curve of genus $g=S_0(A,B)\geq2$, is 
\begin{align*} w^{(q)}(P_\infty^k) = \begin{dcases*}\frac{S_0(A^2,B^2)}{12}-S_0(A,B) & for $q=1$, \\ \frac{S_0(A^2,B^2)}{12} & for $q\geq2$. \end{dcases*}\end{align*}
%\[w^{(q)}(P_\infty^k) = \begin{cases} f_1(n,d;k)-g(g+1)/2 & \text{for } q=1, \\ f_1(n,d;k)-g(g-1)/2 & \text{for } q\geq2, \end{cases}\]
\end{thm}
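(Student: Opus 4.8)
The plan is to combine the basis $\mathfrak{B}_q(A,B)$ constructed in the preceding proposition with the definition of $q$-Weierstrass weight, and then reduce everything to the power-sum formulas of Proposition~\ref{prop:closed-forms}. First I would recall that the elements of $\mathfrak{B}_q(A,B)$ have pairwise distinct orders of vanishing at $P_\infty^k$, so they form a basis adapted to $P_\infty^k$ in the sense needed to compute $w^{(q)}(P_\infty^k)$. The orders of vanishing achieved are exactly the elements of the set $R_q=\{(2g-2)q-r : r\in R,\ (2g-2)q-r\geq 0\}$, where $R$ is the representable set $R(A,B)$; equivalently, the \emph{missing} orders of vanishing (the ``gaps'' of $P_\infty^k$ with respect to $q$-differentials) are precisely $\NR_q=\{(2g-2)q-s : s\in\NR(A,B),\ (2g-2)q-s\geq 0\}$.

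Next I would write the weight as the difference between the actual sum of orders of vanishing and the ``generic'' sum $0+1+\cdots+(d_q-1)=\binom{d_q}{2}$. Since the orders of vanishing, together with the gaps $\NR_q$, partition $\{0,1,\dots,(2g-2)q\}$, we have
\[
\sum_{i=1}^{d_q}\ord_{P_\infty^k}(\psi_i) = \sum_{n=0}^{(2g-2)q} n - \sum_{n\in\NR_q} n = \binom{(2g-2)q+1}{2} - \sum_{n\in\NR_q}n.
\]
Therefore
\[
w^{(q)}(P_\infty^k) = \binom{(2g-2)q+1}{2} - \binom{d_q}{2} - \sum_{n\in\NR_q}n.
\]
The key computational step is evaluating $\sum_{n\in\NR_q}n$. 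For $q\geq 2$ the condition $(2g-2)q-s\geq 0$ is automatic for all $s\in\NR(A,B)$ (since $\max\NR(A,B)=2g-1\leq (2g-2)q$ when $g\geq 2$), so $\NR_q$ is just a reflection of $\NR(A,B)$ and
\[
\sum_{n\in\NR_q} n = g\,(2g-2)q - \sum_{s\in\NR(A,B)} s = g(2g-2)q - S_1(A,B).
\]
For $q=1$ one must account for the single excluded value: $2g-1\in\NR(A,B)$ gives $(2g-2)\cdot 1-(2g-1)=-1<0$, so that term drops out, while $2g-2\notin\NR(A,B)$ by Corollary~\ref{cor:2g=F+1}; hence $\NR_1$ has $g-1$ elements and $\sum_{n\in\NR_1}n = (g-1)(2g-2) - \big(S_1(A,B)-(2g-1)\big)$.

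Finally I would substitute $d_1=g$, $d_q=(g-1)(2q-1)$, and the closed form $S_1(A,B)=\tfrac{g^2-g}{2}+\tfrac{S_0(A^2,B^2)}{12}$ from Proposition~\ref{prop:closed-forms} (recalling $g=S_0(A,B)$), and simplify. The arithmetic should collapse the binomial terms against the $g(2g-2)q$ contribution, leaving $w^{(q)}(P_\infty^k)=S_0(A^2,B^2)/12$ for $q\geq 2$ and an extra additive correction of $-S_0(A,B)$ in the $q=1$ case coming from the two exceptional orders $2g-2$ and $2g-1$. I expect the main obstacle to be purely bookkeeping: carefully tracking which reflected gaps are nonnegative (the $q=1$ boundary case) and verifying that the binomial identities simplify exactly as claimed; there is no conceptual difficulty beyond correctly invoking Proposition~\ref{prop:closed-forms} and Corollary~\ref{cor:2g=F+1}.
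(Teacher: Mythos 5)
Your proposal is correct and follows essentially the same route as the paper's own proof: it uses the adapted basis $\mathfrak{B}_q(A,B)$, writes the weight as the reflected-gap sum over $\NR_q$ subtracted from $\binom{(2g-2)q+1}{2}-\binom{d_q}{2}$, handles the $q=1$ boundary via $2g-1\in\NR(A,B)$ and Corollary~\ref{cor:2g=F+1}, and finishes with the $S_1(A,B)$ formula from Proposition~\ref{prop:closed-forms}. The arithmetic indeed collapses to $S_1(A,B)-g(g+1)/2$ for $q=1$ and $S_1(A,B)-g(g-1)/2$ for $q\ge 2$, exactly as in the paper.
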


\proof
Since the orders of vanishing at $P_\infty^k$ of the basis $q$-differentials are all different, $w^{(q)}(P_\infty^k)$ is the sum of those orders of vanishing minus the sum of the integers from $0$ to $d_q-1$.  The set of orders of vanishing is the set $R_q$ from the above proof. Thus, \[w^{(q)}(P_\infty^k)=\sum\limits_{r_q\in R_q}r_q - \frac{(d_q-1)d_q}{2}.\]  The complement of $R_q$ in the interval $[0,(2g-2)q]$ is $\NR_q=\{(2g-2)q-s : s\in \NR, (2g-2)q-s\geq0\}$, so \[w^{(q)}(P_\infty^k) = \frac{(2g-2)q((2g-2)q+1)}{2} - \sum\limits_{s_q\in \NR_q}s_q - \frac{(d_q-1)d_q}{2}.\]  We now consider the various values of $q$.

If $q=1$, then $\NR_1=\{(2g-2)-s : s\in \NR(A,B)\setminus\{2g-1\} \},$ a set of $g-1$ integers.  Thus, \[\sum\limits_{s_1\in \NR_1}s_1 = \sum\limits_{s\in \NR(A,B)\setminus\{2g-1\}}((2g-2)-s) = (g-1)(2g-2)-(S_1(A,B)-(2g-1)).\] Since $d_1=g$, 
$w^{(1)}(P_\infty^k)= S_1(A,B)-g(g+1)/2.$  By Proposition~\ref{prop:closed-forms}, $w^{(1)}(P_\infty^k)=S_0(A^2,B^2)/12-S_0(A,B)$.

If $q\geq2$, then $\NR_q=\{(2g-2)-s : s\in \NR(A,B)\},$ a set of $g$ integers.  Thus, \[\sum\limits_{s_q\in \NR_q}s_q = \sum\limits_{s\in \NR(A,B)}((2g-2)q-s) = g(2g-2)q-S_1(A,B).\] 
Since $d_q=(g-1)(2q-1)$, $w^{(q)}(P_\infty^k) = S_1(A,B) - g(g-1)/2.$ By Proposition~\ref{prop:closed-forms}, $w^{(q)}(P_\infty^k)=S_0(A^2,B^2)/12$.
\qed

In the special case that $G$ is a geometric sequence of natural numbers with $\gcd(G)=1$, we can solve for $w^{(q)}(P_\infty^k)$ explicitly.

\begin{cor}
For $k$-tuples $A=(a,\dots,a)$ and $B=(b,\dots,b)$ with $\gcd(a,b)=1$ such that $S_0(A,B)\ge 2$,%and $a,b>1$, %the $q$-Weierstrass weight of $P_\infty^k$ on $C_k$ is
\begin{align*}
w^{(q)}(P_\infty^k) &= 
\begin{dcases*} 
\frac{(b^2-1)a^{2k+2}-(a^2-1)b^{2k+2}}{24(a^2-b^2)} - \frac{(b-1)a^{k+1}-(a-1)b^{k+1}}{2(a-b)}-\frac{11}{24} & for $q=1$ \\
\frac{(b^2-1)a^{2k+2}-(a^2-1)b^{2k+2}}{24(a^2-b^2)}+\frac{1}{24} & for $q \ge 2$.
\end{dcases*}
\end{align*}
\end{cor}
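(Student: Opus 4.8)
The plan is to specialize Theorem~\ref{thm:q-wt-p-infty-compound} to the geometric setting and then substitute the closed form for $S_0$ recorded in equation~\eqref{eqn:S0_alt_formula}. When $A=(a,\dots,a)$ and $B=(b,\dots,b)$, component-wise exponentiation gives $A^2=(a^2,\dots,a^2)$ and $B^2=(b^2,\dots,b^2)$, so that $G(A^2,B^2)=G(a^2,b^2;k)$ and hence $S_0(A^2,B^2)=S_0(a^2,b^2;k)$, while $S_0(A,B)=S_0(a,b;k)$; compare Corollary~\ref{cor:geom-power-sums}. The hypothesis $S_0(a,b;k)\ge 2$ forces $a\ne b$ — if $a=b$, then $\gcd(a,b)=1$ gives $a=b=1$, whence $1\in G$ and $S_0(a,b;k)=0$ — so equation~\eqref{eqn:S0_alt_formula} applies and all denominators appearing below are nonzero.

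Next I would extract the additive constant from equation~\eqref{eqn:S0_alt_formula}. Writing it out directly, \[S_0(a,b;k)=\frac{(b-1)a^{k+1}-(a-1)b^{k+1}}{2(a-b)}+\frac12,\] and, applying it with $(a,b)$ replaced by $(a^2,b^2)$, \[\frac{S_0(A^2,B^2)}{12}=\frac{S_0(a^2,b^2;k)}{12}=\frac{(b^2-1)a^{2k+2}-(a^2-1)b^{2k+2}}{24(a^2-b^2)}+\frac1{24}.\]

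The two cases now follow immediately from Theorem~\ref{thm:q-wt-p-infty-compound}. For $q\ge 2$ that theorem gives $w^{(q)}(P_\infty^k)=S_0(A^2,B^2)/12$, which is exactly the second displayed formula. For $q=1$ it gives $w^{(1)}(P_\infty^k)=S_0(A^2,B^2)/12-S_0(A,B)$; subtracting the expression for $S_0(a,b;k)$ above and combining the rational constants via $\tfrac1{24}-\tfrac12=-\tfrac{11}{24}$ yields the first displayed formula.

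The only thing requiring care — and it is bookkeeping rather than a genuine obstacle — is splitting off the additive constant $\tfrac12$ from equation~\eqref{eqn:S0_alt_formula} correctly in each instance and tracking the resulting constant $-\tfrac{11}{24}$ in the $q=1$ case; no further ideas are needed.
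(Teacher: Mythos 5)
Your proposal is correct and is exactly the argument the paper intends: the corollary is stated as a direct specialization of Theorem~\ref{thm:q-wt-p-infty-compound} to $A=(a,\dots,a)$, $B=(b,\dots,b)$, with $S_0(a,b;k)$ and $S_0(a^2,b^2;k)$ rewritten via equation~\eqref{eqn:S0_alt_formula}, and your bookkeeping of the constants ($\tfrac12$, $\tfrac1{24}$, hence $-\tfrac{11}{24}$) matches. The side remarks (that $S_0(A,B)\ge 2$ rules out $a=b$, so the denominators $a-b$ and $a^2-b^2$ are nonzero) are exactly the checks needed, so nothing is missing.
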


\subsection{Examples of suitable towers}\label{sec:tower-examples}
There are some examples of nonsingular towers in \cite[Section 3.4]{Shor11}, which are done in finite characteristic.  We use the same approach in characteristic zero.

\begin{thm}
For any $k$-tuples $A=(a_1,\dots,a_k)$ and $B=(b_1,\dots,b_k)$ of natural numbers where $\gcd(a_i,b_j)=1$ for all $i\le j$, and for $c_1,\dots,c_k\in\mathbb{Q}$ such that $c_i$ is not a $b_i$th power of any rational number, let $f_i(x)=x^{b_i}-c_i$ and $H_i(x,y)=y^{a_i}-f_i(x)$.  Then the affine curve \[A_k=\{(x_0,x_1,\dots,x_k)\in\mathbb{C}^{k+1} : H_i(x_{i-1},x_{i})=0\text{ for } 1\leq i\leq k) \}\] is nonsingular for all $k$.
\end{thm}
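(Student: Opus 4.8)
\medskip
\noindent\textbf{Proof proposal.}
The plan is to verify the Jacobian criterion directly: I will show that at every point $P=(p_0,\dots,p_k)\in A_k$ the $k\times(k+1)$ Jacobian of $(H_1,\dots,H_k)$ has rank $k$. Two preliminary observations: since $c_i\in\mathbb{Q}$ is not a $b_i$th power of a rational, necessarily $b_i\ge 2$ (every rational is a first power of itself); and since the singular locus of a variety defined over $\mathbb{Q}$ is again defined over $\mathbb{Q}$, it has a $\overline{\mathbb{Q}}$-point whenever it is nonempty, so it suffices to rule out singular points all of whose coordinates are algebraic.

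With $f_i(x)=x^{b_i}-c_i$ we have $f_i'(x)=b_ix^{b_i-1}$, so the Jacobian at $P$ has $(i,\ell)$-entry $-b_ip_{i-1}^{b_i-1}$ if $\ell=i-1$, entry $a_ip_i^{a_i-1}$ if $\ell=i$, and $0$ otherwise. The submatrix in columns $1,\dots,k$ is lower triangular with diagonal $a_1p_1^{a_1-1},\dots,a_kp_k^{a_k-1}$, so the rank is $k$ as soon as none of $p_1,\dots,p_k$ vanishes. If exactly one coordinate of $P$ is zero, then either it is $p_0$ (and that same lower-triangular minor is still nonzero), or it is some $p_{j_0}$ with $j_0\ge1$, in which case deleting column $j_0$ leaves the minor $\pm\prod_{i=1}^{j_0}b_ip_{i-1}^{b_i-1}\cdot\prod_{i=j_0+1}^{k}a_ip_i^{a_i-1}$, all of whose factors involve nonzero coordinates. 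Hence $P$ can be singular only if at least two of its coordinates vanish, and the theorem reduces to showing that \emph{no point of $A_k$ has two zero coordinates}.

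Suppose, for contradiction, that $P\in A_k$ has $p_{j_1}=p_{j_2}=0$ with $j_1<j_2$; by the remark above we may take every coordinate algebraic. Evaluating $H_i$ at $P$ gives $p_i^{a_i}=p_{i-1}^{b_i}-c_i$. If $j_2=j_1+1$, then $H_{j_2}$ yields $c_{j_2}=p_{j_1}^{\,b_{j_2}}=0$, contradicting $c_{j_2}\ne0$. So assume $j_2\ge j_1+2$. Then $p_{j_1}=0$ gives $p_{j_1+1}^{a_{j_1+1}}=-c_{j_1+1}\in\mathbb{Q}$, and inductively $p_i^{a_i}\in\mathbb{Q}(p_{j_1+1},\dots,p_{i-1})$ for $j_1+1\le i\le j_2-1$, so $F:=\mathbb{Q}(p_{j_1+1},\dots,p_{j_2-1})$ is obtained from $\mathbb{Q}$ by successively adjoining an $a_{j_1+1}$th root, then an $a_{j_1+2}$th root, \dots, then an $a_{j_2-1}$th root, and in particular $p_{j_2-1}\in F$. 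Meanwhile $H_{j_2}$ with $p_{j_2}=0$ gives $p_{j_2-1}^{\,b_{j_2}}=c_{j_2}\in\mathbb{Q}^\times$. I would then invoke the elementary lemma: \emph{if $\alpha^b=c\in\mathbb{Q}^\times$ and $e:=[\mathbb{Q}(\alpha):\mathbb{Q}]$ is coprime to $b$, then $c\in(\mathbb{Q}^\times)^b$} — indeed $N_{\mathbb{Q}(\alpha)/\mathbb{Q}}(\alpha)^b=N_{\mathbb{Q}(\alpha)/\mathbb{Q}}(\alpha^b)=N_{\mathbb{Q}(\alpha)/\mathbb{Q}}(c)=c^e$, so $c^e\in(\mathbb{Q}^\times)^b$, which with $\gcd(e,b)=1$ forces $c\in(\mathbb{Q}^\times)^b$. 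Applied with $\alpha=p_{j_2-1}$ and $b=b_{j_2}$, this shows $c_{j_2}$ is a $b_{j_2}$th power of a rational, contradicting the hypothesis.

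The main obstacle is the input that lemma requires, namely that $e=[\mathbb{Q}(p_{j_2-1}):\mathbb{Q}]$ is coprime to $b_{j_2}$. Every radical exponent $a_i$ in the tower is coprime to $b_{j_2}$ (since $i\le j_2$), yet a radical extension $K(\gamma)/K$ with $\gamma^{n}\in K$ can have degree \emph{not} dividing $n$ when $\gamma^{n}$ is itself a perfect power in $K$ (the roots of $x^{n}-1$ being the extreme case), and such a degree could a priori share a factor with $b_{j_2}$. Closing this gap — equivalently, showing that $c_{j_2}$ is forced to be a genuine rational $b_{j_2}$th power, not merely one in a cyclotomic extension — is where one must combine the coprimality hypotheses with the assumption that \emph{each} $c_i$, not just $c_{j_2}$, fails to be a $b_i$th power, presumably by arguing prime-by-prime along the tower and by also feeding in the intermediate relations $p_i^{a_i}=p_{i-1}^{b_i}-c_i$ rather than only the two endpoint ones. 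Everything else — the Jacobian computation, the reduction to two vanishing coordinates, and the norm lemma — is routine.
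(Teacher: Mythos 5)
Your Jacobian analysis and the reduction to the claim that \emph{no point of $A_k$ has two vanishing coordinates} are exactly the paper's first step, and your treatment of adjacent zeros ($j_2=j_1+1$ forcing $c_{j_2}=0$) is fine. The difference, and the problem, is in the second step. The paper's argument there is a degree count up the subtower starting at the zero coordinate: since $p_{j_1}=0$, each subsequent $p_\ell$ is an $a_\ell$th root of an element of the field generated by the earlier coordinates, so (as the paper asserts) $[\mathbb{Q}(p_{j_2-1}):\mathbb{Q}]$ divides $a_{j_1+1}\cdots a_{j_2-1}$; on the other hand $p_{j_2-1}$ is a root of $x^{b_{j_2}}-c_{j_2}$, so its degree also divides $b_{j_2}$ and is greater than $1$ because $c_{j_2}$ has no rational $b_{j_2}$th root. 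Since $\gcd(a_\ell,b_{j_2})=1$ for $\ell\le j_2$, the two divisibility statements force the degree to be $1$, a contradiction. Note that once one has the first divisibility, your norm lemma is not needed at all; and conversely, that divisibility is precisely the coprimality input your norm argument is missing.

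So the gap you flag --- establishing that $[\mathbb{Q}(p_{j_2-1}):\mathbb{Q}]$ is coprime to $b_{j_2}$ --- is genuine: as written, your argument does not prove the theorem, and the part you defer (``arguing prime-by-prime along the tower'') is the actual mathematical content of this step. To be fair, the concern you raise is a real one: a root of $x^n-c$ need not generate an extension of degree dividing $n$ when $c$ is a perfect power in the base field (for instance $2\zeta_5$ is a root of $x^5-32$ and has degree $4$ over $\mathbb{Q}$), and the intermediate radicands $-c_{j_1+1}$ and $p_{\ell-1}^{b_\ell}-c_\ell$ are not assumed to be non-powers; the paper simply asserts the divisibility of the degrees without addressing this, so your hesitation points at exactly the step the paper treats as immediate. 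But identifying the delicate point is not the same as closing it: a complete proof must either justify that $[\mathbb{Q}(p_{j_2-1}):\mathbb{Q}]$ divides $a_{j_1+1}\cdots a_{j_2-1}$ (or at least that its prime factors avoid those of $b_{j_2}$), which is what the paper's argument uses, or find another route to the coprimality your norm lemma needs. Until one of these is supplied, the proposal is an outline whose central step is unproven.
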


\proof
The Jacobian matrix of $A_k$ is 
\[J=\begin{pmatrix}
-b_1\cdot x_0^{b_1-1} & a_1\cdot x_1^{a_1-1} & 0 & 0 & \cdots & 0 & 0 \\
0 & -b_2\cdot x_1^{b_2-1} & a_2\cdot x_2^{a_2-1} & 0 & \cdots & 0 & 0 \\
0 & 0 & -b_3\cdot x_2^{b_3-1} & a_3\cdot x_3^{a_3-1} & \cdots & 0 & 0 \\
\vdots & \vdots & \vdots & \vdots & \ddots & \vdots & \vdots \\
0 & 0 & 0 & 0 & \cdots & -b_k\cdot x_{k-1}^{b_k-1} & a_k\cdot x_k^{a_k-1} \\
\end{pmatrix},\] 
which is a $k\times (k+1)$ matrix.  If the rank of $J$ at $P$ is $k$, then $P$ is nonsingular.  Otherwise, $P$ is a singular point.  Note that the rank of $J$ drops precisely when two coordinates of a point are equal to zero.  We will show that this cannot happen on $A_k$.

For the sake of contradiction, suppose $P=(p_0,\dots,p_k)$ has two coordinates equal to zero.  Since our curve is defined iteratively, we may assume $p_0=p_i=0$ for some $i>0$.  Then $p_1$ is an $a_1$th root of $-c_1$, so $[\mathbb{Q}[p_1]:\mathbb{Q}]$ divides $a_1$.  (Also, $p_1\neq0$, so $i\geq2$.)  Similarly, $[\mathbb{Q}[p_2]:\mathbb{Q}]$ divides $a_1 a_2$.  Proceeding in this way, we see $[\mathbb{Q}[p_{i-1}]:\mathbb{Q}]$ divides $a_1 a_2\cdots a_{i-1}$.

Then, since $p_i=0$, we have $0=p_{i-1}^{b_i}-c_i$, which implies $p_{i-1}$ is a $b_i$th root of $c_i$.  Since $c_i$ has no $b_i$th roots in $\mathbb{Q}$, we conclude $[\mathbb{Q}[p_{i-1}]:\mathbb{Q}]$ divides $b_i$ and is greater than 1.  However, since $[\mathbb{Q}[p_{i-1}]:\mathbb{Q}]$ also divides $a_1\cdots a_{i-1}$ and $\gcd(a_1\cdots a_{i-1},b_i)=1$, this is impossible.

Thus, any point $P\in A_k$ can have at most one coordinate equal to zero, which implies $P$ is a nonsingular point of $A_k$.
\qed

In the above theorem, we have the requirement that $\gcd(a_i,b_j)=1$ for all $i\leq j$.  The pair $(A,B)$ is suitable if $\gcd(a_i,b_j)=1$ for all $i\ge j$. Combining these, if $\gcd(a_i,b_j)=1$ for all $i,j$, we can define the curve $C_k$ as in the above theorem and conclude that in its desingularization there is a unique point at infinity $P_\infty^k$ with $q$-Weierstrass weight as given in Theorem~\ref{thm:q-wt-p-infty-compound}.

\bibliography{main-bib-file}{}
\bibliographystyle{plain}

\end{document}